\newtheorem{mainthm}{Theorem}
\newtheorem*{theorem*}{Theorem}
\definecolor{darkgreen}{rgb}{0,0.6,0}
\definecolor{colorA}{rgb}{0.6,0.0,0.5}
\def\tilde{\widetilde}
\def\kappa{\varkappa}
\numberwithin{equation}{section}
\author{Anna Kh.~Balci}
\author{Mikhail Surnachev }
\address{Anna Kh.~Balci, University Bielefeld, Universit\"atsstrasse 25, 33615
  Bielefeld, Germany.}
\email{akhripun@math.uni-bielefeld.de}
\address{Mikhail Surnachev, Keldysh Institute of Applied Mathematics,  Miusskaya sq. 4, 125047
  Moscow,  Russia.}
\email{peitsche@yandex.ru}
\thanks{Anna Kh.Balci research is funded  by the Deutsche Forschungsgemeinschaft (DFG, German Research Foundation) - SFB 1283/2 2021 - 317210226. Mikhail Surnachev was supported by Moscow Center for Fundamental and Applied Mathematics, Agreement with the Ministry of Science and Higher Education of the Russian Federation, No. 075-15-2019-1623.
}
\keywords{Lavrentiev phenomenon; nonlinear elliptic equations;  double phase potential, generalized Orlicz functions}
\subjclass[2010]{%
35J60, 
46E35, 
35J20, 
35J60. 
}
\title{The Lavrentiev phenomenon in calculus of variations  with differential  forms}
\begin{document}
\maketitle
  
  \begin{abstract}
In this article we study convex non-autonomous variational problems with differential forms and corresponding function spaces.  We introduce a general framework for constructing counterexamples to the Lavrentiev gap, which we apply to several models, including the double phase, borderline case of double phase potential, and variable exponent.  The results for the borderline case of double phase potential provide new insights even for the scalar case, i.e., variational problems with $0$-forms.

  \end{abstract}

\section{Introduction}

In this article we study variational problems and corresponding function spaces associated with the integral functionals of the form
\begin{equation}\label{Funct1}
\mathcal{F}_{\Phi,b}(\omega):= \int\limits_\Omega \Phi(x,|d\omega|)\, dV + \int\limits_\Omega b\wedge d\omega
\end{equation}
where~$\Omega$ is a bounded domain in $\mathbb{R}^N$ (later we will only consider the case of a cube or ball) with~$\Phi:\Omega\times \overline{\mathbb{R}}_{+}\to \overline{\mathbb{R}}_{+}$ is a generalized Orlicz function, $\omega$ a differential $k$-form and $b$ a differential $(N-k-1)$-form, and $dV=dx^1\ldots dx^n$. For~$0$-forms the problem reduces to the classical problem of calculus of variations with~$d\omega$ replaced by $\nabla \omega$. Further we refer to the case of $0$-forms (functions) as the scalar case. 

The classical results on differential forms are collected, for example,  in the  books  by  H.~Cartan~\cite{Car70}, M.~Spivak~\cite{Spivak}, V.I.~Arnold~\cite{Arn89}, H.~Flanders~\cite{Fl89}, R. Abraham, J. E.  Marsden  and T. Ratiu  in~\cite{AbrMarRat}. Iwaniec  and Lutoborski~\cite{IwaLut93}, Iwaniec and Martin~\cite{IwaMar93}, Scott~\cite{Sco95},  Iwaniec, Scott and  Stroffolini~\cite{IwaScoStr99}, Schwarz~\cite{Schwarz}, \textcite{MMS08}, Troyanov~\cite{Tro09} studied Sobolev spaces of differential forms, Gaffney inequalities, and related problems of Hodge theory. More recent results in the framework of Calculus of Variations could be found in books   by Csato, Dacorogna and Kneuss~\cite{CsaGyuDar12} and by   Agarwal,  Ding,  and Nolder~\cite{AgaDinNol09}.  
Recent contributions in the direction of the transportation of closed differential forms were obtained by \textcite{DacGan18, DacGan19};  the optimal  constant in {G}affney inequality was studied by \textcite{CsaDac18}. 

We study calculus of variations for the non-autonomous models with general growth and differential forms. For our knowledge no regularity results are know for such classes. The focus of the present paper is on the conditions, separating the case with the energy gap from the regular case (density of smooth functions) for the integrands with nonstandard growth, in particular, for the variable exponent and double phase models. The study  of the $\rho$-harmonic forms goes back to  \cite{Uhl77} K.~Uhlenbeck, who obtained classical results on the H\"older continuity. These results were extended by Hamburger in~\cite{Ham92}.  Beck and Stroffolini \cite{BecStr13}  considered partial  regularity for  general quasilinear systems for differential forms.  Sil~\cite{Sil17,Sil19,Sil119} studied convexity properties of integral functionals with forms and regularity estimates for inhomogeneous quasilinear systems with forms. Let is mention that the results obtained by Sil are related to the autonomous case~$\Phi(x,\abs{d\omega})=\Phi(\abs{d\omega})$.

In the present paper we study variational problems for the integral functional \eqref{Funct1} with convex integrands~$\Phi(x,t)$ that satisfy general ``nonstandard'' growth conditions of the type 
\begin{align}\label{eq:growth}
 -c_0+c_1\abs{t}^{p_-}\le \Phi(x,t)\le c_2\abs{t}^{p_+}+c_0,
\end{align}
where~$1<p_-\le p_+<\infty$,~$c_0\ge 0$,~$c_1,c_2>0$. The class of ``non-standard'' integrands satisfying~\eqref{eq:growth} includes for example the~$p(x)$-integrand
\begin{equation}\label{pxx}
\Phi(x,t) = t^{p(x)},\quad 1<p_{-} \leq p(x)\leq p_{+} <\infty, \quad x\in \Omega,
\end{equation}
studied for the scalar case in many papers and several books, see~\cite{Zhi86,Zhi95,Zhi11,DieHHR11,CruFio13,KokMesRafSam16}. For the variable exponent model the Hölder regularity of solutions, a Harnack type  inequality for non-negative solutions, and boundary regularity results were obtained by \textcite{Alk97,Alk05} and by \textcite{AlkKra04} under some suitable assumptions on the variable exponent of the~$\log$-Hölder type. Gradient regularity for Hölder exponent was obtained by  \textcite{CosMin99} and for the log-Hölder exponents by \textcite{AceMin01}.

Another classical example of non-standard growth conditions is given by double-phase variational problems which correspond to the functional \eqref{Funct1} with 
\begin{equation}\label{dbl}
\Phi(x,t)=\phi(t) +a(x)\psi(t), \quad a\ge 0,
\end{equation}
where $\varphi$ and $\psi$ are Orlicz functions with different growths rates at infinity. Two notable examples are the ``standard'' double phase model with
\begin{equation}\label{dbl_stand}
\varphi(t) = t^p, \quad \psi(t) = t^q, \quad 1<p<q<\infty,
\end{equation}
and the ``borderline'' double phase model
\begin{equation}\label{borderline}
\varphi(t) = t^p \log^{-\beta}(e+t),\quad \psi(t) = t^p \log^\alpha(e+t).
\end{equation}

Colombo and Mingione in~\cite{ColMin15} obtained  Hölder regularity results for double-phase potential model~$\Phi(x,t)=\frac 1 p t^p+\frac 1 qa(x)t^q$ if $q \leq p(d+\alpha)/d$ and $a \in C^{0,\alpha}(\Omega)$. Moreover, bounded minimizers  are automatically~$W^{1,q}(\Omega)$ if~$a \in C^{0,\alpha}(\Omega)$ and~$q \leq p+\alpha$, see the paper~\cite{BarColMin18} by Baroni, Colombo and Mingione.  As it was shown in~\cite{BalDieSur20} those results in the scalar case are  sharp in terms of the counterexamples on the Lavrentiev gap. 

The special case of the model \eqref{dbl} with $\varphi(t) = t^p$ and $\psi(t) = t^p \log(e+t)$ was studied by Baroni,  Colombo,  Mingione in~\cite{BarColMin15}. In particular they obtained  the ~$C^{0,\gamma}_{\loc}$ regularity result for the minimizers provided that the weight~$a(x)$ is~$\log$-H\"older continuous (with some~$\gamma$) and more strong result (any~$\gamma\in (0,1)$) for the case of vanishing~$\log$-Hölder continuous  weight. Skrypnik and  Voitovych recently proved continuity and Harnack inequality for solutions of a general class of elliptic and parabolic equations with nonstandard growth conditions, see ~\cite{skr20}.  The results on generalized Sobolev-Orlicz spaces are collected in the book by Harjulehto and  H\"ast\"o~\cite{HarHas19} and for anisotropic {M}usielak-{O}rlicz setting in the book by Chlebicka,  Gwiazda, \'{S}wierczewska-Gwiazda and Wr\'{o}blewska-Kami\'{n}ska \cite{ChlGwiSwiWro21}.  In the  general framework of problems with nonstandard growth and nonuniform ellipticity recent results are due to Mingione and R\v{a}dulescu \cite{MinRad21} and to De Filippis and Mingione, see  \cite{DeFilMin20,DeFilMin21a,DeFilMin21}.  Recent contributions  for such energies include new results on density of smooth functions and absence of Lavrentiev gap by \textcite{BulGwiSkr22}, \textcite{koch22global}, and \textcite{BorChlFelBla23}.

An essential feature of the nonautonomous models with nonstandard growth is the presence of the Lavrentiev gap phenomenon. The  energy~$\mathcal{F}_{\Phi,b}$ defines the corresponding generalized partial Sobolev-Orlicz spaces of differential forms~$W^{d,\Phi(\cdot)}(\Omega, \Lambda^k)$ (the natural energy space for $\mathcal{F}_{\Phi,0}$, which consists of forms with $W^{1,1}(\Omega)$ coefficients which fall in the domain of $\mathcal{F}_{\Phi,0}$) described in  Section~\ref{sec:Sobolev}. The Lavrentiev gap in this case is the inequality  
\begin{equation}\label{Lavr}
 \inf  \mathcal{F}_{\Phi,b}(W^{d,\Phi(\cdot)}_c(\Omega,\Lambda^{k})) < \inf  \mathcal{F}_{\Phi,b}(C_0^\infty(\Omega,\Lambda^k)).
\end{equation}
where $W^{d,\Phi(\cdot)}_c(\Omega,\Lambda^{k})$ is the set of $W^{d,\Phi(\cdot)}(\Omega,\Lambda^{k})$ forms compactly supported in $\Omega$. 

A closely related problem is density of smooth functions in the natural energy space of the functional. Denote the closure of smooth forms from $W^{d,\Phi(\cdot)}(\Omega, \Lambda^k)$ in this space by $H^{d,\Phi(\cdot)}(\Omega, \Lambda^k)$. If any function from the domain of $\mathcal{F}_{\Phi,b}$ can be approximated by smooth functions with energy convergence (equivalently, if $H^{d,\Phi(\cdot)}(\Omega, \Lambda^k)=W^{d,\Phi(\cdot)}(\Omega, \Lambda^k)$, which is abbreviated to $H=W$) then the Lavrentiev gap is obviously absent. In the autonomous case, when  the integrand $\Phi=\Phi(t)$ is an Orlicz function independent of $x$, the Lavrentiev phenomenon is absent ($H=W$). 

In the scalar case (for functions = $0$-forms) the study of such models goes back to Zhikov~\cite{Zhi86}, \cite{Zhi95}, who constructed the first examples on Lavrentiev phenomenon for variable exponent model and double phase model in dimension ~$N=2$.   Esposito, Leonetti, Mingione~\cite{EspLeoMin04} generalized this example to any dimension (for the standard double phase model); Fonseca, Mal\'{y}, Mingione \cite{FonMalMin04}  constructed examples of minimizers for the standard double phase model with large (fractal) sets of discontinuity. All these examples required the dimensional  restriction~$p^-<N<p^+$. This restriction was overcome by the authors of the present paper with Diening in~\cite{BalDieSur20} using fractal contact sets for scalar variable exponent, double phase and weighted model. In~\cite{BalSur21} the authors of the present paper studied the Lavrentiev gap property for the borderline double phase model \eqref{dbl} with one saddle point (that is, an example constructed as in \cite{Zhi86}, \cite{Zhi95}, and \cite{EspLeoMin04}) with $p=N$, $\alpha,\beta>0$. 

In this paper we extend the approach of \cite{BalDieSur20} to variational problems with differential forms and refine the construction by using the generalized Cantor sets which have an additional tweaking parameter. This allows  for fine tuning of the singular set, while keeping the formal Hausdorff dimension. We construct examples of the Lavrentiev gap for the $p(x)$-integrand \eqref{pxx} and both ``standard'' double phase \eqref{dbl}, \eqref{dbl_stand} and ``borderline'' double phase \eqref{dbl}, \eqref{borderline} integrands (the last results are new even for the case of scalar functions 0-forms).  For the latter model the fine tuning of the Cantor set is crucial. 

Now we state the main results of this paper. We work with three models: classical double phase potential, borderline double phase potential, and variable exponent. For each of these cases we construct examples for the Lavrentiev gap. However, the construction presented in this paper is not limited to these models. For instance, it can be also used to treat the weighted energy. Let $\Omega$ be a ball in $\mathbb{R}^N$ and $k\in \{0,\ldots,N-2\}$.
\begin{mainthm}\label{theoremA}
Let $p>1$, $\alpha \in [0,1]$, and $q> p + \alpha \max ((k+1)^{-1}, (p-1)(N-k-1)^{-1})$  Then there exists an integrand $\Phi(x,t) = t^p + a(x) t^q$ where nonnegative weight $a=a(x)$ is bounded, $a\in C^\alpha (\overline \Omega)$ if $\alpha\in (0,1)$ and  $a\in \mathrm{Lip}(\overline{\Omega})$ if $\alpha=1$, such that $H^{d,\Phi(\cdot)}(\Omega,\Lambda^{k}) \neq W^{d,\Phi(\cdot)}(\Omega,\Lambda^{k})$ and \eqref{Lavr} holds. 
\end{mainthm}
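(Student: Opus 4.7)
The plan is to adapt the fractal-contact-set strategy of~\cite{BalDieSur20} to the setting of differential forms, exploiting the bilinear pairing $\int_\Omega b\wedge d\omega$ to reduce the problem to a pair of dual scalar problems: one on the $k$-form side and one on the $(N-k-1)$-form side. The asymmetric maximum $\max\{(k+1)^{-1},(p-1)(N-k-1)^{-1}\}$ in the hypothesis should reflect exactly these two dual estimates, with the factor $p-1$ coming from the Orlicz conjugate $\Phi^{*}$ used on the $b$-side.

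Concretely, I would first build two interlocking generalized Cantor sets $C_\omega$ and $C_b$ placed on complementary coordinate $(k+1)$- and $(N-k)$-planes through the centre of $\Omega$; the tweaking parameter of these Cantor sets allows the geometric gap between consecutive generations to be chosen independently of the formal Hausdorff dimension. Next, define $a\in C^{\alpha}(\overline\Omega)$ by a Whitney-type construction so that $a\equiv 0$ on a neighbourhood of $C_\omega$ and $a\gtrsim \operatorname{dist}(\cdot,C_\omega)^{\alpha}$ elsewhere, and choose an auxiliary bounded $(N-k-1)$-form $b$ supported along $C_b$. Using a singular $k$-form $\omega^{*}\in W^{d,\Phi(\cdot)}_c(\Omega,\Lambda^{k})$ whose exterior derivative concentrates on the strata through $C_\omega$ — where $a=0$, so the costly $t^{q}$ contribution vanishes — one makes $\int b\wedge d\omega^{*}$ strictly negative and hence $\inf\mathcal F_{\Phi,b}(W^{d,\Phi(\cdot)}_c)<0$. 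This gives the easy, upper half of~\eqref{Lavr}.

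The substance of the proof is the lower bound $\inf\mathcal F_{\Phi,b}(C_0^\infty)\ge c_{0}>0$. For a smooth competitor $\omega$ I would decompose $\Omega$ into annuli indexed by the self-similar generations of $C_\omega$ (respectively $C_b$), and on each annulus apply a Poincaré/Gaffney-type inequality for differential forms to bound $\bigl|\int b\wedge d\omega\bigr|$ by the $L^{q}(a\,dV)$-energy of $d\omega$ at that scale. Doing this directly with $\omega$ leads to a summable geometric series precisely when $q-p>\alpha(k+1)^{-1}$, the $(k+1)$ being the dimension of the strata carrying $C_\omega$. Alternatively, applying a Hodge decomposition to rewrite $b=d\beta$ on each annulus and integrating by parts shifts all derivatives onto $\omega$ but measures energies in the conjugate Orlicz pair $(\Phi,\Phi^{*})$; the resulting geometric series is summable when $q-p>\alpha(p-1)(N-k-1)^{-1}$. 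Either summable bound forces $\int b\wedge d\omega$ to stay strictly bigger than $\inf\mathcal F_{\Phi,b}(W^{d,\Phi(\cdot)}_c)$, so the hypothesis, which says that one of the two bounds is summable, is sufficient.

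I expect the main obstacle to be Step~3, specifically making the dual Hodge argument sharp in the nonautonomous Orlicz framework: on each annulus one must choose a primitive $\beta$ of $b$ whose $L^{\Phi^{*}(\cdot)}$-norm is controlled by the geometry rather than by smoothness of $b$, because $\Phi$ degenerates where $a=0$. The role of the tweaking parameter of the generalized Cantor set is precisely to create enough geometric room between consecutive generations for the Hodge primitives to be constructed with the correct scaling, without altering the Hausdorff dimension and hence without loosening the dimensional hypothesis. Once both dual estimates are in place, comparing with the singular competitor of Step~2 yields $H^{d,\Phi(\cdot)}\ne W^{d,\Phi(\cdot)}$ and the Lavrentiev gap~\eqref{Lavr} simultaneously.
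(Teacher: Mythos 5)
Your geometric picture --- a Cantor-type contact set on a proper coordinate subspace, a weight $a$ vanishing on the region where the singular competitor's differential lives, and a dual object $b$ pairing negatively with that competitor --- is the right one, and the role you assign to the tweaking parameter $\gamma$ of the generalized Cantor set is broadly consistent with the paper. The essential gap is in your Step~3. The paper never proves the lower bound for smooth competitors by annular Poincar\'e/Gaffney estimates or by constructing Hodge primitives of $b$ on annuli; no quantitative estimate on smooth competitors is needed at all. The point is that $b$ is taken to be \emph{exact with compactly supported potential}: $b=dA^\circ$ with $A^\circ=\eta A$, where $\eta\in C_0^\infty(\Omega)$ equals $1$ near the singular set. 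Then for every smooth compactly supported $\omega$ the Stokes theorem gives $\int_\Omega b\wedge d\omega=\int_{\partial\Omega}A^\circ\wedge d\omega=0$, hence $\mathcal F_{\Phi,b}\ge 0$ on $C_0^\infty(\Omega,\Lambda^{k})$, while $\mathcal F_{\Phi,b}(tu^\circ)\le c\,t^{s}-t<0$ for small $t>0$ by the $\nabla_2$-condition together with $\int_\Omega dA^\circ\wedge du^\circ=-1$. Your $b$, described only as a bounded form supported along a Cantor set, lacks this null-Lagrangian structure, so your Step~3 carries a burden the construction is designed to avoid; the estimate you want (controlling $\lvert\int_\Omega b\wedge d\omega\rvert$ by the weighted energy of $d\omega$ for \emph{arbitrary} smooth $\omega$) is precisely the vanishing of the separating functional on $H^{d,\Phi(\cdot)}$, which holds because of exactness and approximation, not because of scale-by-scale Poincar\'e inequalities. (Note also that the paper's $dA$ is unbounded near the singular set; the requirement is $\int_\Omega\Phi^*(x,|dA|)\,dV<\infty$, not boundedness.)

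Two further discrepancies. First, all quantitative hypotheses of Theorem~\ref{theoremA} enter through the two integrability conditions $\int_\Omega\Phi(x,|du|)\,dV<\infty$ and $\int_\Omega\Phi^*(x,|dA|)\,dV<\infty$ for a \emph{single} pair $(u,A)$ built over \emph{one} Cantor set $\frS=\frC^m_{\lambda,\gamma}\times\{0\}^{N-m}$; the supports of $du$ and $dA$ are separated by complementary cone conditions around that one set, not by two interlocking Cantor sets on complementary planes. The two terms in $\max\bigl((k+1)^{-1},(p-1)(N-k-1)^{-1}\bigr)$ correspond to the two admissible placements of the Cantor set (the subcritical and supercritical setups, with $m=N-k-1$ or $m=k+1$), and in each setup \emph{both} integrals must converge --- so the ``either summable bound suffices for a fixed construction'' logic is not how the maximum arises. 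Second, the non-density $H^{d,\Phi(\cdot)}\ne W^{d,\Phi(\cdot)}$ does not follow from comparing infima; it requires exhibiting the bounded linear functional $\mathcal S^\circ(w)=\int_\Omega dA^\circ\wedge dw$ on $W^{d,\Phi(\cdot)}(\Omega,\Lambda^{k})$, which vanishes on $H^{d,\Phi(\cdot)}$ but equals $-1$ at $u$. Your proposal does not supply this step.
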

\begin{mainthm}\label{theoremB}
Let $p_0>1$, $\alpha,\beta \in \mathbb{R}$, $\varkappa\geq 0$ such that $ \alpha+\beta>p_0+\varkappa$. Let $\varphi$ and $\psi$ be two Orlicz functions such that $ \varphi (t) \sim t^{p_0} \ln^{-\beta} t$ and $\psi(t) \sim t^{p_0} \ln ^{\alpha} t$ 
for large $t$. Then there exists an integrand $\Phi(x,t) = \varphi(t) + a(x) \psi(t)$ where $a=a(x)$ is a nonnegative function with the modulus of continuity $C \ln^{-\varkappa}(1/t)$ such that $H^{d,\Phi(\cdot)}(\Omega,\Lambda^{k}) \neq W^{d,\Phi(\cdot)}(\Omega,\Lambda^{k})$ and \eqref{Lavr} holds.
\end{mainthm}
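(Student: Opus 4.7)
The plan is to reproduce the Cantor-set scheme used for Theorem A, but adapted to the logarithmic regime of the borderline double phase integrand. The three free parameters $\alpha,\beta,\varkappa$ cannot all be tracked by a self-similar Cantor set, so one needs the generalized (tweakable) Cantor construction mentioned in the introduction, whose extra parameter governs the shrinkage rate of the generations and hence the modulus of continuity of the weight that can be placed on them.

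\textbf{Step 1 (geometry and weight).} Build two interlocking generalized Cantor sets $K_1,K_2\subset\Omega$ of complementary codimensions $k+1$ and $N-k-1$. The $j$-th generation consists of cubes of side length $\ell_j$, with the tweaking parameter chosen so that $a(x):=\ln^{-\varkappa}(1/\operatorname{dist}(x,K_2))$ (suitably truncated and mollified) is nonnegative, bounded, has the prescribed $\ln^{-\varkappa}$ modulus of continuity, and vanishes identically on $K_2$.

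\textbf{Step 2 (competitor and dual form).} Assemble a $k$-form $\omega_{*}$ generation by generation from rescaled model ``saddle'' $k$-forms supported in the cubes of $K_1$; analogously build an $(N-k-1)$-form $b$ from the Cantor scaffolding around $K_2$. A level-by-level summation shows $\int_{\Omega}\varphi(|d\omega_{*}|)\,dV<\infty$ thanks to the $\beta$-logarithmic factor, while $a$ vanishes on the set where $\psi(|d\omega_{*}|)$ would blow up, giving $\int_{\Omega}a(x)\psi(|d\omega_{*}|)\,dV<\infty$. The linear coupling $\int_{\Omega}b\wedge d\omega_{*}$ evaluates to a fixed constant $c_{*}\neq 0$ by design.

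\textbf{Step 3 (smooth lower bound).} For any smooth sequence $\omega_{\varepsilon}\to\omega_{*}$ in $W^{d,\Phi(\cdot)}(\Omega,\Lambda^{k})$, combine Stokes' theorem with the Young-type inequality for $\Phi(x,\cdot)$:
\begin{equation*}
\Bigl|\int_{\Omega}b\wedge d\omega_{\varepsilon}\Bigr|\leq \int_{\Omega}\Phi(x,|d\omega_{\varepsilon}|)\,dV+\int_{\Omega}\Phi^{*}(x,|b|)\,dV.
\end{equation*}
Because $b$ is concentrated near $K_2=\{a=0\}$, the conjugate integrand reduces there to the unweighted $\varphi^{*}(|b|)$, and a generation-level estimate matching the $\beta$-gain in $\varphi^{*}$ against the singularity of $b$ and the $\ln^{-\varkappa}$ decay of $a$ shows $\int_{\Omega}\Phi^{*}(x,|b|)\,dV=+\infty$ precisely under $\alpha+\beta>p_{0}+\varkappa$. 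Since $\int_{\Omega}b\wedge d\omega_{\varepsilon}\to c_{*}\neq 0$, replacing $b$ with a large multiple and choosing the orientation forces $\int_{\Omega}\Phi(x,|d\omega_{\varepsilon}|)\,dV$ to stay bounded away from $\int_{\Omega}\Phi(x,|d\omega_{*}|)\,dV$. This gives both $H^{d,\Phi(\cdot)}\neq W^{d,\Phi(\cdot)}$ and the strict inequality \eqref{Lavr}.

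\textbf{Main obstacle.} The hard part is the sharp level-by-level accounting in Step 3: balancing the logarithmic gain from $\beta$, the loss from $\alpha$, the continuity-modulus index $\varkappa$, and the Cantor shrinkage rate $\ell_{j}$ so that the divergence threshold comes out exactly as $\alpha+\beta>p_{0}+\varkappa$ rather than a dimension-dependent inequality. This is precisely where the generalized Cantor set's tweaking parameter is essential, since a standard self-similar Cantor set would lock $\varkappa$ to the codimensions and yield a strictly weaker statement.
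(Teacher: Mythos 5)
Your Steps 1--2 are broadly in the spirit of the paper's construction (a single fractal singular set $\frS=\frC^m_{\lambda,\gamma}\times\{0\}^{N-m}$, with the generalized Cantor set's tweaking parameter $\gamma$ indeed being what makes the threshold come out as $\alpha+\beta>p_0+\varkappa$), but Step 3 inverts the logic of the argument in a way that breaks it. The framework requires the dual form to have \emph{finite} conjugate energy, $\int_\Omega\Phi^*(x,|dA|)\,dV<\infty$: this is what makes $\mathcal S^\circ(w)=\int_\Omega dA^\circ\wedge dw$ a bounded linear functional on $W^{d,\Phi(\cdot)}(\Omega,\Lambda^{k})$, a functional which vanishes on every smooth compactly supported form by Stokes yet equals $-1$ on $u^\circ$; that contradiction is the non-density, and adding this functional to the energy yields the gap via $\mathcal F_{\Phi,b}(tu^\circ)\le ct^{s}-t<0$ for small $t$. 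If, as you assert, $\int_\Omega\Phi^*(x,|b|)\,dV=+\infty$, then your Young inequality is vacuous (its right-hand side is infinite) and bounds nothing; moreover $\int_\Omega b\wedge d\omega_\varepsilon$ cannot ``converge to $c_*\neq0$'' along smooth compactly supported competitors --- it is identically zero on them by Stokes, and the proof consists precisely in showing that the functional is \emph{continuous} (which needs the finite conjugate integral), so that this zero value persists in the limit and contradicts $\mathcal S^\circ(\omega_*)\neq 0$.

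Relatedly, you place the zero set of the weight on the wrong side. To make $\int_\Omega\Phi^*(x,|dA|)\,dV$ finite one needs $a$ to be \emph{positive}, of size $\ln^{-\varkappa}(1/|\hat x|)$, on the support of $dA$, so that $\Phi^*(x,\cdot)\le a_0\,\psi^*(\cdot/a_0)$ there and the $\alpha$-gain in $\psi^*$ compensates the singularity of $dA$; if $a\equiv 0$ near the support of $dA$, then $\Phi^*=\varphi^*\sim t^{p_0'}\ln^{\beta/(p_0-1)}t$ there and the conjugate integral genuinely diverges, which destroys rather than proves the example. The weight must instead vanish on the support of $d\omega_*$, where the $-\beta$ factor in $\varphi$ makes the primal energy finite (this part you do state correctly). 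Finally, $\alpha+\beta>p_0+\varkappa$ is not a ``divergence threshold'': it is exactly the condition under which the admissible window for $\gamma$ (for instance $(1-\beta)/(p_0-1)<\gamma\frD<(\alpha-\varkappa-p_0+1)/(p_0-1)$ in the supercritical case) is nonempty, so that \emph{both} integrals $\int_\Omega\varphi(|du|)\,dV$ and $\int_\Omega a_0\psi^*(|dA|/a_0)\,dV$ converge simultaneously. Also note that the paper's weight is a function of $|\hat x|$ (the distance to the coordinate subspace carrying $\frS$) times a cutoff, not of $\operatorname{dist}(x,\frS)$, which is what makes the modulus of continuity come out as exactly $C\ln^{-\varkappa}(1/t)$.
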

\begin{mainthm}\label{theoremC}
Let $1<p_{-}<p_{+}$. There exists a variable exponent $p:\Omega\to [p_{-},p_{+}]$ with the modulus of continuity $\kappa_0 (\ln t^{-1})^{-1} \ln\ln t^{-1}$, $\varkappa_0=\varkappa_0(p_{-},p_{+},N,k)> 0$, such that for $\Phi(x,t) =t^{p(x)}$ there holds $H^{d,\Phi(\cdot)}(\Omega,\Lambda^{k}) \neq W^{d,\Phi(\cdot)}(\Omega,\Lambda^{k})$ and \eqref{Lavr}.
\end{mainthm}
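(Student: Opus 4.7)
The plan is to apply the general counterexample framework developed in this paper (used for Theorems~\ref{theoremA} and~\ref{theoremB}) to the variable exponent model, following the strategy for scalar $0$-forms in~\cite{BalDieSur20} and lifting it to $k$-forms. The construction has three coupled ingredients: a singular $k$-form $\omega_0$ of finite $W^{d,\Phi(\cdot)}$-energy whose exterior derivative concentrates across a fractal set $\Sigma$; a variable exponent $p(x)\in[p_-,p_+]$ that takes the value $p_+$ in a thin region adjacent to $\Sigma$ and $p_-$ on the complementary region; and a dual $(N-k-1)$-form $b$ paired against $d\omega_0$ in the affine-linear term of~\eqref{Funct1} which produces a strictly positive jump across $\Sigma$ that smooth approximants cannot reproduce without an infinite energy cost, thereby witnessing~\eqref{Lavr}.

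Concretely, I would take $\Sigma$ to be a generalized Cantor set of the form $\Sigma_1\times\Sigma_2\times\{0\}^{N-2}\subset\Omega$, where each $\Sigma_j\subset\mathbb{R}$ is built by a self-similar iteration with a scale-dependent tweaking parameter $\varkappa_n$ at generation $n$; this refinement of the dyadic Cantor construction is the key technical device of the paper and allows one to prescribe both the Hausdorff dimension and the fine scaling of $\Sigma$. The form $\omega_0$ is defined as a piecewise-smooth $k$-form that is locally closed on $\Omega\setminus\Sigma$ and ``changes sign'' across $\Sigma$, the $k$-form analogue of Zhikov's saddle $\mathrm{sign}(x_1)\,\mathrm{sign}(x_2)$. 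The dual form $b$ is a smooth closed $(N-k-1)$-form, essentially a Hodge dual of an indicator of one of the two phases, chosen so that $\int_\Omega b\wedge d\omega_0$ equals a positive multiple of the capacity that $\Sigma$ carries through the jump of $\omega_0$.

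The exponent $p(x)$ is set equal to $p_+$ on the $n$-th annular collar of $\Sigma$ at scale $r_n\sim 3^{-n}$ and to $p_-$ outside a small neighbourhood of $\Sigma$, with a smooth transition in between. The oscillation of $p$ on a ball of radius $r$ is then controlled by the decay of the tweak parameters $\varkappa_n$. Choosing $\varkappa_n$ to decay at a rate that is slower than the standard one by a factor $\log n$ loosens the admissible modulus of $p$ from the sharp log-Hölder rate $\varkappa_0/\log(1/r)$ to the claimed $\varkappa_0(\log 1/r)^{-1}\log\log(1/r)$, while still keeping the geometric series of scale-wise smooth-energy contributions divergent. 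Scale-wise energy accounting on the $W$-side yields finiteness of $\mathcal{F}_{\Phi,0}(\omega_0)$, while on the smooth side one obtains a divergent lower bound via Young's inequality applied to the pairing with $b$ on each annulus, exactly as in the scalar analogue of~\cite{BalDieSur20}.

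The main obstacle is the simultaneous balance of three quantitative conditions in the choice of the tweak sequence $\varkappa_n$: the capacity of $\Sigma$ must remain positive so that the pairing $\int b\wedge d\omega_0$ is strictly positive; the oscillation of $p$ must not exceed the target $\log\log/\log$ modulus; and the smooth-side scale-wise energy must still diverge. A fourth, structural, difficulty is the lift from $0$-forms to general $k$-forms: one must verify that the saddle-type $\omega_0$ lies in $W^{d,\Phi(\cdot)}(\Omega,\Lambda^k)$ but not in $H^{d,\Phi(\cdot)}(\Omega,\Lambda^k)$, and that a suitable admissible test form $b$ exists with $\int b\wedge d\omega_0>0$. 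This is where the Hodge-theoretic machinery of the preliminaries on Sobolev spaces of forms enters most crucially, and where the restriction $k\in\{0,\ldots,N-2\}$ in the statement originates.
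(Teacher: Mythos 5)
Your high-level strategy (a fractal barrier set, a singular form of finite energy, an exponent with log-log modulus, and a linear perturbation $b$ that smooth forms cannot ``see'') is indeed the paper's approach, but several structural choices in your outline are wrong in ways that would make the construction fail. First, the geometry of the singular set: you take $\Sigma=\Sigma_1\times\Sigma_2\times\{0\}^{N-2}$, a product of exactly two Cantor factors. In the paper the singular set is $\frC^{m}_{\lambda,\gamma}\times\{0\}^{N-m}$ with $m=k+1$ or $m=N-k-1$ (in the numbering of Theorem~\ref{theoremC}), and this dimension count is forced: the quantitative violation of Stokes, $\int_{\partial\Omega}A\wedge du=1$ with $|du|\cdot|dA|=0$ a.e., is built from $\ast_{\hat x}d\Gamma_{N-m}(\hat x)$ and a convolution of $\ast_{\bar x}d\Gamma_{m}(\bar x)$ against the Cantor measure, so the Cantor factor must have exactly the dimension dual to the degree of the forms. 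With two fixed Cantor factors you cannot set up this duality for general $k$ and $N$. Second, your exponent profile ``$p=p_+$ in a collar of $\Sigma$, $p=p_-$ far away'' misses the essential threshold mechanism: the paper takes $p(x)=\xi\bigl(p_0+\sigma(|\hat x|)(2\tilde\rho-1)\bigr)$ with $\sigma(t)=\kappa\ln\ln(1/t)/\ln(1/t)\to0$, so that $p\to p_0$ on $\Sigma$ and equals $p_0-\sigma$ on $\operatorname{supp}du$ and $p_0+\sigma$ on $\operatorname{supp}dA$, where $p_0$ is tied to the fractal dimension by $p_0=(N-\frD)/(m-\frD)$ or $(N-\frD)/m$. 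Both integrability conditions $\int\Phi(x,|du|)\,dV<\infty$ and $\int\Phi^*(x,|dA|)\,dV<\infty$ hinge on $p$ crossing this threshold exactly on $\Sigma$; if $p\equiv p_+$ on a full neighbourhood of $\Sigma$, both $du$ and $dA$ see the same exponent there and one of the two integrals diverges. The log-log modulus comes from $\sigma$, not from a scale-dependent tweak of the Cantor construction; the Cantor tweak is a single exponent $\gamma$ in $l_j=\lambda^j j^\gamma$, and the proof consists in verifying that $\kappa$ large and $\gamma$ in an explicit window make two weighted integrals converge.

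Two further points. Your $b$ cannot be a \emph{smooth} closed $(N-k-1)$-form: if it were, $\int_\Omega b\wedge d\omega$ would reduce to a boundary term for every $\omega\in W^{d,\Phi(\cdot)}$ and the functional could not distinguish $H$ from $W$; in the paper $b=dA^\circ$ is only in $L^{\Phi^*(\cdot)}$ and is singular on $\Sigma$ --- its $\Phi^*$-integrability is one of the two conditions to be checked, not a triviality. Finally, the comparison of infima is not done by a ``divergent lower bound'' on the smooth side: all energies are finite. The argument is that $\mathcal{S}^\circ(w)=\int_\Omega dA^\circ\wedge dw$ is a bounded linear functional vanishing on $H^{d,\Phi(\cdot)}$ (Stokes plus approximation) with $\mathcal{S}^\circ(u)=-1$, whence $\mathcal{F}_{\Phi,b}\ge0$ on $C_0^\infty$ while $\mathcal{F}_{\Phi,b}(tu^\circ)\le ct^{s}-t<0$ for small $t>0$ by the $\nabla_2$-condition. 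You would need to supply this (or an equivalent) mechanism explicitly; as written, your scale-wise energy accounting does not yield the strict inequality~\eqref{Lavr}.
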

Theorems \ref{theoremA}, \ref{theoremB}, \ref{theoremC} follow from the theorems~ \ref{theorem:double}, \ref{theorem:bor}, \ref{theorem:var} are proved in Section~\ref{sec:App}. The weight $a=a(x)$ in Theorems \ref{theoremA} and \ref{theoremB} and the exponent $p=p(x)$ in Theorem \ref{theoremC} (as well as the forms providing the examples of non-density and competitors used to show the Lavrentiev phenomenon) are regular outside of a singular set of Cantor type which lies on a proper subspace of $\mathbb{R}^N$. The dimension of this subspace is either $k+1$ or $N-k-1$ depending on the parameters. Compare this to \cite{BalDieSur20} where for the scalar case $k=0$ the singular set was either a Cantor set $\frC$ on a line (``superdimensional'' setup, which was used to construct the examples with variable exponent taking values greater than the space dimension $N$) or a Cantor set $\frC^{N-1}$ on a hyperplane (``subdimensional'' setup, which was used to construct the examples with variable exponent taking values less than the space dimension $N$). For $k$-forms in the variable exponent setting the value of exponent separating these two cases is $N/(k+1)$ --- for exponent taking values greater than $N/(k+1)$ the singular set will be of the form $\frC^{k+1}\times \{0\}^{N-k-1}$ and for exponent taking values less than $N/(k+1)$ the singular set is of the form $\frC^{N-k-1} \times\{0\}^{k+1}$.

Our setting can be called ``semivectorial'', or generalized Uhlenbeck structure, since the integrand is isotropic. In this respect it has  substantially more rigid  structure than the fully vectorial problems (say, of elasticity theory) with quasi-convex integrands. Note that in the ``fully'' vectorial setting the situation is more delicate, and the Lavrentiev phenomenon is possible even for ``standard'' growth conditions in the autonomous (but anisotropic!) case, see Ball, Mizel \cite{BalMiz85} and Foss, Hruza, Mizel~\cite{FosHruMiz03} in the context of non-linear elasticity. 

The  models with Lavrentiev phenomenon are also challenging to study numerically since  the standard numerical schemes fail to converge  to the~$W$-minimiser of the problem. For the scalar case the problem could be solved using non-conforming methods, see \textcite{BalOrtSto22}. The vectorial setting remains open.

\bigskip
\textbf{Structure of the paper.} In Section~\ref{sec:diffforms} we recall some basic definitions related to the theory of differential forms and Sobolev-Orlicz spaces. In Section~\ref{sec:min} we study the existence of minimizers of the functional \eqref{Funct1}. In Section~\ref{sec:framework} we describe the general framework for construction of examples using the fractal Cantor barriers. In Section~\ref{sec:App} we apply this general construction to different problems. We obtain the examples of Lavrentiev gap and non-density of smooth functions  for the classical double phase in Subsection~\ref{subsec:stan_d}, for  the borderline double phase model in  Subsection~\ref{subsec:bor}, and for the  variable exponent model in  Subsection~\ref{subsec:var}. The results for the borderline double phase model are new even in the scalar case.

\section{Differential forms and Sobolev-Orlicz spaces}\label{sec:diffforms} 

Here we recall some basic facts and definitions from the theory of differential forms. In general we follow definitions and notations from \cite{CsaGyuDar12}[Chapters 1.2;2.1;3.1-3.3],  but the Hodge codifferential is the formal adjoint of the exterior derivative $d$ (as in \cite{IwaLut93,GolTro06}). 

\subsection{Exterior algebra.} 

The Grassman algebra of exterior $k$-forms (i.e. skew-symmetric $k$-linear functions) over $\mathbb{R}^N$ is denoted by $\Lambda^k(\mathbb{R}^N)$, or for brevity just by $\Lambda^k$. The exterior product of $f\in \Lambda^k$ and $g\in \Lambda^l$ is defined by
 $$
(f\wedge g) (\xi_1,\ldots,\xi_{k+l}) = \sum \mathrm{sign}\, (i_1,\ldots, i_k, j_1,\ldots,j_l) f(\xi_{i_1}, \ldots, \xi_{i_k}) g(\xi_{j_1}, \ldots, \xi_{j_l})
 $$
 where the summation is over permutations $(i_1,\ldots, i_k, j_1,\ldots,j_l)$ of $(1,2,\ldots,k+l)$ such that $i_1<\ldots < i_k$, $j_1<\ldots<j_l$.  This operation is linear in both arguments, associative, and for $f\in \Lambda^k$ and $g\in \Lambda^l$ there holds $f\wedge g = (-1)^{kl}g\wedge f$.

Let $e_j$ be an orthonormal basis $\{e_j\}_{j=1}^N$ in $\mathbb{R}^N$ and $\{e^j\}_{j=1}^N$ be its dual system in $\Lambda^1$, $e^j(e_l) = \delta_{jl}$. The monomials $e^{i_1}\wedge \ldots \wedge e^{i_k}$, $i_1<i_2<\ldots<i_k$ form a basis in $\Lambda^k$. Denote $f_{i_1\ldots i_k} = f(e_{i_1},\ldots, e_{i_k})$. Then the set of $f_{i_1\ldots i_k}$ with $i_1<i_2<\ldots<i_k$ gives the coordinates of $f$:
$$
f=\sum_{1\leq i_1<\ldots<i_k \leq N} f_{i_1\ldots i_k} e^{i_1}\wedge \ldots \wedge e^{i_k}.
$$

 The scalar product of $f,g\in \Lambda^k$ with coordinates $f_{i_1\ldots i_k}$ and $g_{i_1\ldots i_k}$ is given by
$$
\langle f,g\rangle = \sum_{1\leq i_1<\ldots<i_k \leq N} f_{i_1\ldots i_k} g_{i_1,\ldots i_k}.
$$
The scalar product does not depend on the particular choice of the orthonormal basis $\{e_j\}_{j=1}^N$. We denote $|f| = \langle f,f \rangle ^{1/2}$.  

The Hodge star operator $\ast:\Lambda^k\to \Lambda^{N-k}$ is defined by $f\wedge g = \langle\ast f, g\rangle e^1\wedge \ldots \wedge e^N$ for any $g\in  \Lambda^{N-k} $, or equivalently by $f\wedge \ast g = \langle f,g \rangle e^1 \wedge\ldots \wedge e^N$ for all $f,g\in \Lambda^k$. The Hodge star operator $\ast$ is an isometry between $\Lambda^k$ and $\Lambda^{N-k}$ and for $f\in \Lambda^k$ there holds 
$$
\ast( \ast f)= (-1)^{k(N-k)} f,\quad
\ast^{-1} f = (-1)^{k(N-k)}\ast f.
$$
For any $f\in \Lambda^k$ and shuffle $j_1,\ldots,j_N$  there holds $(\ast f)_{j_{k+1}\ldots j_N} =\mathrm{sign}\, (j_1,\ldots,j_N) f_{j_1\ldots j_k}$.

The interior product (contraction) of $f\in \Lambda^k$ and $g\in \Lambda^l$ defined by
$$
g\lrcorner f = (-1)^{(N-k)(k-l)} \ast (g \wedge (\ast f))
$$
is the adjoint of the wedge product:
$$
\langle g\wedge \alpha, f\rangle = \langle \alpha, g\lrcorner f\rangle\quad \text{for any}\quad \alpha \in \Lambda^{k-l}, \quad f\in \Lambda^k, \quad g\in \Lambda^l.
$$ 
There holds $\ast (g\lrcorner f) = (-1)^{(k-l)l} g\wedge \ast f$ and $\ast (g\wedge f)=(-1)^{kl} g\lrcorner \ast f$.


If $l=0$ then $g\lrcorner f = gf$. If $l>k$ then $g\lrcorner f=0$, if $l=k$ then $g\lrcorner f = f\lrcorner g =(f,g)$. If $l\leq k$ then
$$
(g\lrcorner f)_{j_1\ldots j_{k-l}} = \sum_{1\leq i_1 <\ldots<i_l \leq N} g_{i_1\ldots i_l} f_{i_1\ldots i_l j_1\ldots j_{k-l}}. 
$$

For $w,v \in \Lambda^1$ there holds 
\begin{equation}\label{decomp}
 w \lrcorner (v\wedge f) + v \wedge (w\lrcorner f) = \langle w,v\rangle f.
\end{equation}
For a vector $X$ the operator $\imath_X: \Lambda^k\to \Lambda^{k-1}$ is defined by $(\imath_X f) (\xi_1,\ldots,\xi_{k-1}) = f(X,\xi_1,\ldots,\xi_{k-1})$.  There holds $\imath_v (f\wedge g) = (\imath_v f) \wedge g + (-1)^{\mathrm{deg}\, f} f\wedge (\imath_v g)$, and $\imath_v \imath_w =-\imath_w \imath_v$, $\imath_v\imath_v =0$. For a vector $v\in \mathbb{R}^N$ and the 1-form $v^\flat\in \Lambda^1$ with the same coordinates there holds $v^\flat\lrcorner f = \imath_v f$. 


\subsection{Differential forms.}

A differential form is a mapping from $\Omega\subset \mathbb{R}^N$ to $\Lambda^k$. Further $\Omega$ will be a bounded contractible domain with sufficiently regular boundary. Using the canonical basis $dx^{i_1}\wedge \ldots \wedge dx^{i_k}$ a $k$-form can be represented as
\begin{equation}\label{fform1}
f=\sum_{1\leq i_1<\ldots<i_k \leq N} f_{i_1\ldots i_k} dx^{i_1}\wedge \ldots \wedge dx^{i_k}.
\end{equation}
Then $|f|^2(x):= \sum_{1\leq i_1<\ldots<i_k \leq N} |f_{i_1\ldots i_k}|^2(x)$.

For two differential forms $f$ and $g$ of order $k$ their scalar product in the sense of $L^2(\Omega, \Lambda^k)$ is
 $$
 (f,g) = \int\limits_\Omega f \wedge \ast g = \int\limits_\Omega \langle f,g\rangle \, dV,\quad dV=dx^1\wedge \ldots \wedge dx^N.
 $$
The operation of exterior differentiation $d$ is a unique mapping from $k$-forms to $(k+1)$-forms such that $df$ coincides with the differential of $f$ for $0$-forms (functions), $d\circ d=0$, $d(\alpha\wedge \beta) = d\alpha\wedge \beta + (-1)^{k} \alpha \wedge d\beta$ for any $\alpha\in C^1(\Omega,\Lambda^k)$ and $\beta\in C^1(\Omega,\Lambda^l)$. For a $k$-form $f$, 
$$
df (\xi_1,\ldots, \xi_{k+1}) = \sum_{j=1}^{k+1} (-1)^{j-1} [f'(x) \xi_j] (\xi_1,\ldots, \widehat{\xi_j}, \ldots, \xi_{k+1}), \quad \xi_1,\ldots,\xi_{k+1}\in\mathbb{R}^N.
$$
The interior derivative (Hodge codifferential) of a $k$-form $f$ is 
$$
\delta f = (-1)^{N(k-1)+1} \ast d \ast f = (-1)^{k} \ast^{-1} d \ast f.
$$
There holds $d^2=0$, $\delta^2=0$. On $k$-forms
\begin{equation}\label{hodge_diff}
\ast \delta = (-1)^k d \ast \quad\text{and} \quad \ast d = (-1)^{k+1}\delta \ast.
\end{equation} 
For a $k$-form $f$ and $l$-form $g$ there holds 
$$
d (f\wedge g) = df\wedge g + (-1)^k f\wedge dg, \quad \delta (f \lrcorner g) = (-1)^{k+l+1} df \lrcorner g - f \lrcorner \delta g.
$$

Formally one can write $df = \nabla \wedge f$, $\delta f = - \nabla \lrcorner f$, and in coordinates, for the form \eqref{fform1}, using the Einstein convention of summation over repeated indices we have 
\begin{equation}\label{diff_comp}
(df)_{i_1\ldots i_{k+1}} = (-1)^{l-1} \partial_{x_{i_l}} f_{i_1\ldots \hat\imath_l\ldots i_{k+1}}, \quad (\delta f)_{i_1\ldots i_{k-1}} = -\partial_{x_j} f_{ji_1\ldots i_{k-1}}.
\end{equation}

Let $\nu=(\nu_1,\ldots,\nu_N)$ be the unit outer normal to $\Omega$ and $\nu^\flat = \nu_1 dx^1 +\ldots+\nu_N dx^N$. For a differential $k$-form $f$ the standard Gauss theorem reads as (see \eqref{diff_comp}) 
$$
\int\limits_\Omega (df)_{i_1\ldots i_{k+1}} d V = \int\limits_{\partial \Omega} (\nu^\flat \wedge f)_{i_1\ldots i_{k+1}} d \sigma, \quad
\int\limits_\Omega (\delta f)_{i_1\ldots i_{k-1}}dV = -\int\limits_{\partial \Omega} (\nu^\flat \lrcorner f)_{i_1\ldots i_{k-1}} d\sigma 
$$
for each $1\leq i_1 <\ldots <i_k \leq N$, where $dV=dx^1\ldots dx^N$ is the standard volume form and $d\sigma$ is the surface area element. The same reasoning also gives the integration-by-parts formula
\begin{equation}\label{green00}
\int\limits_\Omega\langle df,g\rangle \, dV - \int\limits_\Omega \langle f,\delta g\rangle\, dV
=\int\limits_{\partial \Omega}\skp{\nu^\flat\wedge f}{g}\, d\sigma=\int\limits_{\partial \Omega}\skp{f}{\nu^\flat \lrcorner g}\, d\sigma.
\end{equation}

In the sense of forms, the surface element $d\sigma$ is connected to the volume form $dV$ by $d\sigma = \imath_\nu dV$. The orientation of $\partial \Omega$ is chosen such that the integral of $d\sigma$ over any ``substantial'' boundary part is positive.

The operators $d$ and $\delta$ are adjoint on compactly supported forms. By direct computation (use \eqref{decomp} with $v=\nabla$, $w=-\nabla$), $d\delta + \delta d = -\triangle$, where the Laplace operator is applied componentwise.



A form $f$ satisfying $df=0$ is closed. A form $f$ satisfying $\delta f =0$ is coclosed. If $f=dg$ then $f$ is exact, and if $f=\delta g$ then $f$ is coexact. If both $df=0$ and $\delta f = 0$ the form is called harmonic (or harmonic field).

The pullback of the form $f$ under the mapping $\varphi$ is defined by $\varphi^*f$, 
$$
(\varphi^* f)(x;\xi_1,\ldots,\xi_k)=f(\varphi(x);\varphi'(x)\xi_1,\ldots, \varphi'(x)\xi_k).
$$
This operation satisfies $\varphi^* (\alpha\wedge \beta) = \varphi^* \alpha \wedge \varphi^* \beta$ and $\varphi^* d = d\varphi^*$.


\subsection{Tangential and normal part of a form.} Let $\nu=(\nu_1,\ldots,\nu_N)$ be the unit outer normal to $\partial \Omega$ and for a differential form $\omega$ define its tangential part
$$
t \omega  (\xi_1,\ldots,\xi_k)= \omega (\xi_1 - (\nu,\xi_1)\nu , \ldots, \xi_k - (\nu,\xi_k) \nu)
$$
and its normal part $n\omega = \omega - t\omega$. Define the $1$-form $\nu^\flat = \nu_1 dx^1+\ldots+\nu_N dx^N$, then 
\begin{gather*}
tf =\nu^\flat \lrcorner (\nu^\flat \wedge f), \quad nf = \nu^\flat\wedge (\nu^\flat \lrcorner f),\quad f=tf+nf,\\
\nu^\flat \wedge t f = \nu^\flat\wedge f,\quad \nu^\flat \lrcorner\, tf = 0,\quad tf=0 \Leftrightarrow \nu^\flat \wedge f =0,\\
\nu^\flat \wedge nf =0, \quad \nu^\flat \lrcorner\, nf = \nu^\flat \lrcorner f, \quad nf =0 \Leftrightarrow \nu^\flat \lrcorner f=0.
\end{gather*}
That is, setting $tf$ is equivalent to setting $\nu^\flat \wedge f$ and setting $nf$ is equivalent to setting $\nu^\flat\lrcorner f$.  

While integrating over $\partial\Omega$, the tangential part of a form coincides with its pullback under the inclusion $\jmath: \partial \Omega \to \overline \Omega$, that is $t\omega = \jmath^* \omega$, and the normal part of the form vanishes.

The decomposition of a form into the tangential and normal parts can be also done using coordinates in a ``collar'' neighbourhood of $\partial \Omega$, by choosing (locally) an ``admissible'' coordinate system (map) $\varphi:U\to V$, $U,V\subset \mathbb{R}^N$, such that $\partial \Omega \cap V \subset \{\varphi(y',0)\,:\, (y',0)\subset U\}$ and $(\varphi_{y_i}(y',0), \varphi_{y_N} (y',0))=\delta_{iN}$, $1\leq i \leq N$. In this coordinate system for
$$
\omega = \sum_{1\leq i_1<\ldots<i_k\leq N} \omega_{i_1\ldots i_k} dy^{i_1}\wedge \ldots \wedge dy^{i_k}
$$
we have $\omega = t\omega + n\omega$, where
\begin{align*}
t\omega &= \sum_{1\leq i_1<\ldots<i_k<N} \omega_{i_1\ldots i_k} dy^{i_1}\wedge \ldots \wedge dy^{i_k} ,\\
n\omega &=  \sum_{1\leq i_1<\ldots<i_{k-1}<N} \omega_{i_1\ldots i_k} dy^{i_1}\wedge \ldots \wedge dy^{i_{k-1}}\wedge dy^N.
\end{align*}
If $\omega$ is a function ($0$-form) we set $t\omega = \omega$ and $n\omega=0$. If $\omega$ is an $N$-form we set $t\omega=0$, $n\omega=\omega$.  The decomposition $\omega = t\omega + n\omega$ is invariant on $\partial \Omega$ and we have
\begin{gather*}
t\ast = \ast n,\quad n\ast = \ast t, \quad td=dt,\quad n\delta = \delta n.
\end{gather*}
In particular, $t\omega=0$ on $\partial \Omega$ implies $td\omega=0$ on $\partial \Omega$ and $n\omega =0$ on $\partial \Omega$ implies $n\delta \omega=0$ on $\partial \Omega$. 

In terms of the Stokes theorem, integration-by-parts formula \eqref{green00} reads as follows: by \eqref{hodge_diff} for a $k$-form $f$ and a $(k+1)$-form $g$ there holds $d(f\wedge \ast g) = df \wedge \ast g - f \wedge \ast \delta g$, therefore 
\begin{equation}\label{green0}
(d f,g) -  (f,\delta g) = \int \limits_\Omega \bigl( df \wedge \ast g - f \wedge \ast \delta g \bigr) = \int\limits_\Omega d (f \wedge \ast g) =
\int\limits_{\partial\Omega} f \wedge \ast g  = \int\limits_{\partial\Omega} tf\wedge \ast ng.
\end{equation}

\subsection{Orlicz functions setup} \label{sect:Orlicz}

We say that~$\oldphi\,:\, [0,\infty) \to [0,\infty]$ is an Orlicz
function if $\oldphi$ is convex, left-continuous, $\oldphi(0)=0$,
$\lim\limits_{t \to 0} t^{-1}\oldphi(t)=0$ and
$\lim\limits_{t \to \infty} t^{-1} \oldphi(t)=\infty$.  The conjugate Orlicz
function~$\oldphi^*$ is defined by
\begin{align*}
  \oldphi^*(s) &:= \sup_{t \geq 0} \big( st - \oldphi(t)\big).
\end{align*}
In particular, $st \leq \oldphi(t) + \oldphi^*(s)$.

In the following we assume that
$\Phi\,:\, \Omega \times [0,\infty) \to [0,\infty]$ is a generalized
Orlicz function, i.e. $\Phi(x, \cdot)$ is an Orlicz function for
every~$x \in \Omega$ and $\Phi(\cdot,t)$ is measurable for
every~$t\geq 0$. We define the conjugate function~$\Phi^*$ point-wise,
i.e.  $\Phi^*(x,\cdot) := (\Phi(x,\cdot))^*$.

We assume that $\Phi$ satisfies the ``nonstandard'' growth condition
\begin{align}\label{eq:growth1}
 -c_0+c_1\abs{t}^{p_-}\le \Phi(x,t)\le c_2\abs{t}^{p_+}+c_0,
\end{align}
where~$1<p_-\le p_+<\infty$,~$c_0\ge 0$,~$c_1,c_2>0$, and the following properties:
\begin{enumerate}
\item $\Phi$ satisfies the $\Delta_2$-condition,
  i.e. there exists~$c \geq 2$ such that for
  all~$x \in \Omega$ and all~$t\geq 0$
  \begin{align}
    \label{eq:phi-Delta2}
    \Phi(x,2t) &\leq c\, \Phi(x,t). 
  \end{align}
\item $\Phi$ satisfies the~$\nabla_2$-condition,
  i.e. $\Phi^*$ satisfies the~$\Delta_2$-condition. As a consequence,
  there exist~$s>1$ and~$c> 0$ such that for all $x\in \Omega$,
  $t\geq 0$ and $\gamma \in [0,1]$ there holds
  \begin{align}
    \label{eq:phi-Nabla2}
    \Phi(x,\gamma t) \leq c\,\gamma^s \,\Phi(x,t).
  \end{align}
\item $\Phi$ and~$\Phi^*$ are proper, i.e. for every~$t\geq 0$ there holds
$$
\int\limits_\Omega \Phi(x,t)\,dV< \infty \quad \text{and} \quad \int\limits_\Omega \Phi^*(x,t)\,dV < \infty.
$$

\end{enumerate}

\subsection{Sobolev-Orlicz spaces of differential forms}\label{sec:Sobolev}

Let~$\Omega\subset \setR^N$ be a bounded domain in $\mathbb{R}^N$. In our applications this will always be a ball or a cube. 

Different functional spaces like Lebesgue spaces~$L^p(\Omega)$ and Lebesgue-Orlicz spaces $L^{\varphi(\cdot)}(\Omega)$, Sobolev spaces~$W^{1,p}(\Omega)$ and Sobolev-Orlicz spaces $W^{1,\Phi(\cdot)}(\Omega)$, spaces of $k$ times continuosly differentiable functions~$C^{k}(\Omega)$ are defined in the usual way (see, for example, \cite{HarHas19}). 

The Lebesgue-Orlicz space $L^{\Phi(\cdot)}(\Omega)$ is the set of all measurable functions in $\Omega$ with finite Luxemburg norm
$$
\|f\|_{L^{\Phi(\cdot)}(\Omega)} = \inf \biggl\{ \lambda>0\,:\, \int\limits_\Omega \Phi(x,|f|\lambda^{-1})\,dV \leq 1 \biggr\}.
$$

The Sobolev-Orlicz space $W^{1,\Phi(\cdot)}(\Omega)$ is the set of functions $f\in W^{1,1}(\Omega)$ such that $|\nabla f| \in L^{\Phi(\cdot)}(\Omega)$, endowed with the norm $\|f\|_{W^{1,\Phi(\cdot)}(\Omega)} = \|f\|_{L^1(\Omega)} + \|\nabla f\|_{L^{\Phi(\cdot)}(\Omega)}$.




For a generalized Orlicz function $\Phi(x,t)$ we define the Lebesgue-Orlicz space $L^{\Phi(\cdot)}(\Omega, \Lambda^k)$ as the space of measurable differential $k$-form such that $|f| \in L^{\Phi(\cdot)}(\Omega)$. The norm in this space is the norm of $|f|$ in  $L^{\Phi(\cdot)}(\Omega)$. For constant $\Phi\equiv p \geq 1$ we get the standard Lebesgue space $L^p(\Omega,\Lambda^k)$.



Let~$r\in \mathbb{N}\cup \{0\}$. For $E = \Omega$ or $E = \overline \Omega$  the space~$C^r(E,\Lambda^k)$ is the space of all differential~$k$-forms for which all partial derivatives~$D^\alpha f^I$ up to the order~$r$ are continuous in $E$. By~$C_0^\infty(\Omega,\Lambda^k)$ we denote the space of smooth $k$-forms with compact support in $\Omega$. 

\begin{definition}[Full Sobolev-Orlicz Space]
We say that a $k$-form~$f\in W^{1,\Phi(\cdot)}(\Omega,\Lambda^k)$ if~$f_{i_1\cdots i_k}\in W^{1,\Phi(\cdot)}(\Omega)$ for every~$1\le i_1<\cdots<i_k\le N$.  The norm is defined componentwise:
$$
\|f\|_{W^{1,\Phi(\cdot)}(\Omega, \Lambda^k)} = \sum_{1\leq i_1<\ldots<i_k\leq N} \|f_{i_1\ldots i_k}\|_{W^{1,\Phi}(\Omega)}.
$$
\end{definition}


If $\Omega$ is of class $C^2$ then for $f\in W^{1,1}(\Omega,\Lambda^k)$ the boundary trace of $f$ exists, belongs to $L^1(\partial \Omega)$, and the Stokes theorem holds (see~\cite{Schwarz}, \cite[Theorem 6.4]{Ren13}):
$$
\int\limits_{\partial \Omega}  f=\int\limits_{\Omega} df.
$$

We say that $u \in L^1_{loc} (\Omega, \Lambda^k)$ has a {\it weak differential}\, $du \in L^1_{loc} (\Omega, \Lambda^{k+1})$ if for any $\xi \in C_0^\infty (\Omega, \Lambda^{k+1} )$ there holds $
(u,\delta \xi)= (du,\xi)$ in $L^2(\Omega, \Lambda^k)$, or equivalently
$$
\int\limits_\Omega u \wedge d\xi = (-1)^{k+1} \int\limits_\Omega du \wedge \xi
$$
for any $\xi \in C_0^\infty (\Omega, \Lambda^{N-k-1})$. 

We say that $u \in L^1_{loc} (\Omega, \Lambda^k)$ has a {\it weak codifferential}\, $\delta u \in L^1_{loc} (\Omega, \Lambda^{k-1})$ if for any $\xi \in C_0^\infty (\Omega, \Lambda^{k-1})$ there holds $(u,d\xi) = (\delta u,\xi)$ in $L^2(\Omega, \Lambda^k)$, or equivalently 
$$
\int\limits_\Omega u\wedge \delta \xi = (-1)^{k} \int\limits_\Omega \delta u \wedge \xi,
$$
for any  $\xi \in C_0^\infty (\Omega, \Lambda^{N-k+1})$.

Both weak differential and codifferential are unique.

\begin{definition}[Partial Sobolev-Orlicz Space]\label{def:Sobolev}
 For~$0\le  k\le N-1$  we define the partial Sobolev-Orlicz space $W^{d,\Phi(\cdot)}(\Omega, \Lambda^k)$ as the set of forms $\omega \in L^1(\Omega, \Lambda^k)$ with weak differential $d\omega \in L^{\Phi(\cdot)}(\Omega,\Lambda^k)$, endowed with the norm
\begin{align*}
\norm{\omega}_{W^{d,\Phi(\cdot)}(\Omega,\Lambda^k)}:=\norm{\omega}_{L^1(\Omega,\Lambda^k)}+\norm{d\omega}_{L^{\Phi(\cdot)}(\Omega,\Lambda^{k+1})}.
\end{align*} 
The space $H^{d,\Phi(\cdot)}(\Omega, \Lambda^k)$ is the closure of smooth forms from $ W^{d,\Phi(\cdot)}(\Omega, \Lambda^k)$ in this space.   

For~$1\le k\le N$  we define  the space $W^{\delta,\Phi(\cdot)}(\Omega, \Lambda^k)$ as the set of forms $\omega \in L^1(\Omega, \Lambda^k)$ with weak codifferential $\delta \omega \in L^{\Phi(\cdot)}(\Omega,\Lambda^k)$ endowed with the norm
\begin{align*}
 \norm{\omega}_{W^{\delta,\Phi(\cdot)}(\Omega,\Lambda^k)}:=\norm{\omega}_{L^1(\Omega,\Lambda^k)}+\norm{\delta\omega}_{L^{\Phi(\cdot)}(\Omega,\Lambda^{k-1})}.
\end{align*} 
The space  $H^{\delta,\Phi(\cdot)}(\Omega, \Lambda^k)$ is the closure of smooth forms from $ W^{\delta,\Phi(\cdot)}(\Omega, \Lambda^k)$ in this space. 
\end{definition}

If $\Omega$ is a bounded $C^2$ domain (or a polyhedral domain), then the following Green's formulas hold \cite[Theorem 3.28]{CsaGyuDar12}. Let $0\le k\le N-1$ and let~$p>1$. If ~$f\in W^{d,p}(\Omega,\Lambda^k)$,~$g\in W^{1,p'}(\Omega,\Lambda^{k+1})$, then
\begin{align*}
 \int\limits_\Omega \skp{df}{g}\, dV-\int\limits_\Omega \skp{\delta g}{f}\, dV=\int\limits_{\partial \Omega}\skp{\nu\wedge f}{g}\, d\sigma. 
\end{align*}
If ~$f\in W^{1,p}(\Omega,\Lambda^k)$,~$g\in W^{\delta,p'}(\Omega,\Lambda^{k+1})$, then
\begin{align*}
 \int\limits_\Omega \skp{df}{g}\, dV-\int\limits_\Omega \skp{\delta g}{f}\,dV=\int\limits_{\partial \Omega}\skp{f}{\nu \lrcorner g}\,d\sigma. 
 \end{align*}

The boundary traces $\nu\wedge f$ and $\nu \lrcorner g$ in these formulas are given by bounded linear mappings from $W^{d,p}(\Omega,\Lambda^k)$ to $W^{-1/p,p}(\partial \Omega, \Lambda^{k+1})$ and from $W^{\delta,p'}(\Omega,\Lambda^{k+1})$ to $W^{-1/p',p'}(\partial \Omega, \Lambda^{k})$, respectively. These mappings are generated by these very integration-by-parts formulas. If $f$ belongs to the full Sobolev space $W^{1,p}(\Omega,\Lambda^k)$, then both tangential and normal components of its boundary trace $tf$ and $nf$ are from $W^{1-1/p,p}(\partial\Omega,\Lambda^k)$.  

Let $W^{d,\Phi(\cdot)}_{c}(\Omega, \Lambda^k)$ be the set of forms from $W^{d,\Phi(\cdot)}(\Omega, \Lambda^k)$ with compact support in $\Omega$.

\begin{definition}[Spaces with zero tangential component]\label{def:SobolevT}

 For~$0\le k\le N-1$ we define the space $ W^{d,\Phi(\cdot)}_{T}(\Omega, \Lambda^k)$ as the set of $\omega \in W^{d,\Phi(\cdot)}(\Omega, \Lambda^k)$ such that $(d\omega,\beta)_{L^2(\Omega,\Lambda^{k+1})}=(\omega,\delta\beta)_{L^2(\Omega,\Lambda^k)}$  for all $\beta\in C^1(\overline{\Omega}, \Lambda^{k+1})$, endowed with the norm of $W^{d,\Phi(\cdot)}(\Omega, \Lambda^k)$.

 
 The space $\tilde W^{d,\Phi(\cdot)}_{T}(\Omega, \Lambda^k)$ is the closure of $W^{d,\Phi(\cdot)}_{c}(\Omega, \Lambda^k)$ in $W^{d,\Phi(\cdot)}(\Omega, \Lambda^k)$.




The space $H^{d,\Phi(\cdot)}_T(\Omega, \Lambda^k)$ is the closure of $C_0^\infty(\Omega,\Lambda^k)$ in $W^{d,\Phi(\cdot)}(\Omega, \Lambda^k)$.


\end{definition}

Clearly, $H^{d,\Phi(\cdot)}_T(\Omega, \Lambda^k) \subset \tilde W^{d,\Phi(\cdot)}_{T}(\Omega, \Lambda^k) \subset W^{d,\Phi(\cdot)}_{T}(\Omega, \Lambda^k)$. A smooth $k$-form $\omega$ belongs to $H^{d,\Phi(\cdot)}_T(\Omega, \Lambda^k)$ if and only if its tangential component $t\omega$ is zero on $\partial \Omega$.

Let $W^{\delta,\Phi(\cdot)}_{c}(\Omega, \Lambda^k)$ be the set of forms from $W^{\delta,\Phi(\cdot)}(\Omega, \Lambda^k)$ with compact support in $\Omega$.

 \begin{definition}[Spaces with zero normal component]\label{def:SobolevN}

 For~$1\le k\le N$ we define the space\\ $W^{\delta,\Phi(\cdot)}_{ N}(\Omega, \Lambda^k)$ as the set of $\omega \in W^{\delta,\Phi(\cdot)}(\Omega, \Lambda^k)$ such that $ (\delta\omega,\beta)_{L^2(\Omega,\Lambda^{k-1})}=(\omega,d\beta)_{L^2(\Omega,\Lambda^{k})}$ for all $ \beta\in C^1(\overline{\Omega}, \Lambda^{k-1})$,  endowed with the norm of $W^{\delta,\Phi(\cdot)}(\Omega, \Lambda^k)$. 
 
The space $\tilde W^{\delta,\Phi(\cdot)}_{N}(\Omega, \Lambda^k)$ is the closure of $W^{\delta,\Phi(\cdot)}_{c}(\Omega, \Lambda^k)$ in $W^{\delta,\Phi(\cdot)}(\Omega, \Lambda^k)$.


 
%
 The space $H^{\delta,\Phi(\cdot)}_N(\Omega, \Lambda^k)$ is the closure of $C_0^\infty(\Omega,\Lambda^k)$ in $W^{\delta,\Phi(\cdot)}(\Omega, \Lambda^k)$.
%
%
%

 \end{definition}

Clearly, $H^{\delta,\Phi(\cdot)}_N(\Omega, \Lambda^k) \subset \tilde W^{\delta,\Phi(\cdot)}_{N}(\Omega, \Lambda^k) \subset W^{\delta,\Phi(\cdot)}_{ N}(\Omega, \Lambda^k)$. A smooth $k$-form $\omega$ belongs to $H^{\delta,\Phi(\cdot)}_N(\Omega, \Lambda^k)$ if and only if its normal component component $n\omega$ is zero on $\partial \Omega$.

The following proposition is straightforward.

\begin{proposition}
All the spaces introduced in Definitions~\ref{def:Sobolev}, \ref{def:SobolevT}, \ref{def:SobolevN} are Banach spaces. 
\end{proposition}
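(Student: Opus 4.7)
I would show that the three ``top level'' spaces
$W^{d,\Phi(\cdot)}(\Omega,\Lambda^k)$ and $W^{\delta,\Phi(\cdot)}(\Omega,\Lambda^k)$
are Banach by a direct completeness argument, and then identify every remaining space
in the three definitions as a closed subspace of one of these two.

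The plan is as follows. First I would take a Cauchy sequence $\{\omega_n\}$ in
$W^{d,\Phi(\cdot)}(\Omega,\Lambda^k)$. By the very definition of the norm,
$\{\omega_n\}$ is Cauchy in $L^1(\Omega,\Lambda^k)$ and $\{d\omega_n\}$ is Cauchy
in $L^{\Phi(\cdot)}(\Omega,\Lambda^{k+1})$. Since $L^1$ is complete and the
Musielak--Orlicz space $L^{\Phi(\cdot)}$ is complete under the Luxemburg norm
(a standard fact using the lower bound in \eqref{eq:growth1}, which gives the
continuous embedding $L^{\Phi(\cdot)}(\Omega)\hookrightarrow L^{p_-}(\Omega)\hookrightarrow L^1(\Omega)$),
we obtain $\omega\in L^1$ and $\eta\in L^{\Phi(\cdot)}$ with $\omega_n\to\omega$ and
$d\omega_n\to\eta$ in the respective norms. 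The key point is then to show that
$d\omega=\eta$ in the weak sense. For any $\xi\in C_0^\infty(\Omega,\Lambda^{N-k-1})$,
\begin{equation*}
\int_\Omega \omega_n\wedge d\xi=(-1)^{k+1}\int_\Omega d\omega_n\wedge\xi,
\end{equation*}
and both sides pass to the limit: the left-hand side because $\omega_n\to\omega$ in $L^1$
and $d\xi$ is bounded, the right-hand side because $d\omega_n\to\eta$ in
$L^{\Phi(\cdot)}\hookrightarrow L^1$ and $\xi$ is bounded. Hence $\eta=d\omega$ weakly
and $\omega_n\to\omega$ in $W^{d,\Phi(\cdot)}(\Omega,\Lambda^k)$. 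The argument for
$W^{\delta,\Phi(\cdot)}(\Omega,\Lambda^k)$ is symmetric, using test forms in $C_0^\infty(\Omega,\Lambda^{N-k+1})$.

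The spaces $H^{d,\Phi(\cdot)}$, $H^{\delta,\Phi(\cdot)}$,
$H^{d,\Phi(\cdot)}_T$, $H^{\delta,\Phi(\cdot)}_N$, $\tilde W^{d,\Phi(\cdot)}_T$, and
$\tilde W^{\delta,\Phi(\cdot)}_N$ are, by construction, closures of certain subsets inside
the already-Banach spaces $W^{d,\Phi(\cdot)}$ or $W^{\delta,\Phi(\cdot)}$; closed subspaces
of Banach spaces are Banach, so nothing further is needed. For
$W^{d,\Phi(\cdot)}_T(\Omega,\Lambda^k)$ I would verify closedness directly: if
$\omega_n\to\omega$ in $W^{d,\Phi(\cdot)}$ and every $\omega_n$ satisfies
$(d\omega_n,\beta)=(\omega_n,\delta\beta)$ for all $\beta\in C^1(\overline\Omega,\Lambda^{k+1})$,
then since $\beta$ and $\delta\beta$ are bounded on $\overline\Omega$ and we have
$L^1$-convergence of $\omega_n$ together with $L^{\Phi(\cdot)}\hookrightarrow L^1$-convergence
of $d\omega_n$, both sides of the identity pass to the limit, giving the same relation for $\omega$.
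An identical argument handles $W^{\delta,\Phi(\cdot)}_N(\Omega,\Lambda^k)$.

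\textbf{Main obstacle.} The only non-routine point is identifying the $L^{\Phi(\cdot)}$-limit
$\eta$ of $\{d\omega_n\}$ with the weak differential $d\omega$, which I handle by testing
against smooth compactly supported forms. This requires both completeness of $L^{\Phi(\cdot)}$
and the embedding $L^{\Phi(\cdot)}(\Omega)\hookrightarrow L^1(\Omega)$, each of which
follows from the growth hypothesis~\eqref{eq:growth1} and boundedness of $\Omega$. Everything
else is a routine closed-subspace check.
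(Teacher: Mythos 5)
Your proof is correct and is precisely the standard argument the authors omit (the paper merely labels the proposition ``straightforward'' and gives no proof): completeness of the two ambient spaces via completeness of $L^1$ and $L^{\Phi(\cdot)}$ plus identification of the limit differential by testing against $C_0^\infty$ forms, and then closedness of the remaining spaces as closures or as kernels of conditions stable under norm convergence. Nothing is missing.
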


For $\Phi(\cdot)\equiv p \in [1,\infty]$ we get the classical partial Sobolev spaces $W^{d,p}_T(\Omega, \Lambda^k)$ and $W^{\delta,p}_{N}(\Omega, \Lambda^k)$ with vanishing tangential (correspondingly normal) component on the boundary. The spaces $W^{d,p}_T(\Omega, \Lambda^k)$ and $W^{\delta,p}_{N}(\Omega, \Lambda^k)$ coincide with the closures of $C_0^\infty(\Omega, \Lambda^k)$ in $W^{d,p}(\Omega, \Lambda^k)$ and $W^{\delta,p}(\Omega, \Lambda^k)$, respectively (see \cite{IwaLut93}). In this case there is no difference between between $H$ and $W$ spaces.

If $\overline\Omega\Subset \Omega'$, a form from $W^{d,\Phi(\cdot)}(\Omega, \Lambda^k)$ or $W^{\delta,\Phi(\cdot)}(\Omega, \Lambda^k)$ belongs to $W^{d,\Phi(\cdot)}_T(\Omega, \Lambda^k)$ or $W^{\delta,\Phi(\cdot)}_{ N}(\Omega, \Lambda^k)$, respectively, iff its extension by zero to $\Omega'\setminus \Omega$ produces an element from $W^{d,\Phi(\cdot)}(\Omega', \Lambda^k)$ or $W^{\delta,\Phi(\cdot)}(\Omega', \Lambda^k)$, respectively.

\section{Minimization problem for non-autonomous functionals with differential forms}\label{sec:min}

\subsection{Gauge fixing}
Recall~(for instance, \cite[Theorem 6.5]{CsaGyuDar12}) the following facts regarding the harmonic forms with vanishing tangential or normal components at the boundary. Let $\mathcal{H}_T(\Omega,\Lambda^k)$ be the set of harmonic forms from $W^{1,2}_T(\Omega,\Lambda^k)$ and $\mathcal{H}_N(\Omega,\Lambda^k)$ be the set of harmonic forms from $W^{1,2}_N(\Omega,\Lambda^k)$. The spaces $\mathcal{H}_T(\Omega,\Lambda^k)$ and $\mathcal{H}_N(\Omega,\Lambda^k)$ are finite dimensional, closed in $L^2(\Omega,\Lambda^k)$, for contractible domains $\mathcal{H}_T(\Omega,\Lambda^k) = \{0\}$ for $0\leq k \leq N-1$ and $\mathcal{H}_N(\Omega,\Lambda^k) = \{0\}$ for $1\leq k \leq N$. The space $\mathcal{H}_T(\Omega,\Lambda^N)$ is the span of $dx^1\wedge\ldots \wedge dx^N$ and the space $\mathcal{H}_N(\Omega,\Lambda^0)$ is the span of $1$. 

We  need the following result on the solvability of the Cauchy-Riemann type systems for differential forms. This result is a particular case of theorems \cite[Theorem 7.2]{CsaGyuDar12} for~$p\ge 2$ and~\cite[Theorems 2.43]{SilT16} for any~$p>1$, and triviality of the set of harmonic forms with zero tangential component at the boundary. 



\begin{corollary}\label{Forms:existence1}
Let $\Omega$ be a bounded contractible $C^3$ domain in $\mathbb{R}^N$, $0\leq k\leq N-1$, $p>1$,  $\omega_0\in W^{1,p}(\Omega,\Lambda^k)$, and $\beta\in \omega_0+ W^{d,p}_T(\Omega,\Lambda^{k})$. The problem 
\begin{gather*}
d\omega = d\beta, \quad \delta \omega =0  \quad \text{in}\quad \Omega,\\
\nu^\flat \wedge \omega = \nu^\flat \wedge \omega_0 \quad \text{on}\quad \partial \Omega
\end{gather*}
has a unique solution $\omega \in W^{1,p}(\Omega, \Lambda^k)$ with
$$
\|\omega\|_{W^{1,p}(\Omega, \Lambda^k)} \leq C \left(\|\omega_0\|_{W^{1,p}(\Omega, \Lambda^k)} +\|d\beta\|_{L^p(\Omega,\Lambda^{k+1})}\right).
$$
with $C=C(N,p,\Omega)$. 
\end{corollary}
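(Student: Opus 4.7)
The plan is to reduce the problem to one with vanishing tangential boundary component and then invoke the cited gauge-fixing results directly. Seek $\omega$ in the form $\omega = \omega_0 + \eta$, so that the boundary condition $\nu^\flat \wedge \omega = \nu^\flat \wedge \omega_0$ on $\partial\Omega$ becomes $\nu^\flat \wedge \eta = 0$, equivalently $t\eta = 0$ on $\partial\Omega$. The equations translate to
\begin{align*}
 d\eta = d\beta - d\omega_0 = d(\beta-\omega_0), \qquad \delta\eta = -\delta\omega_0 \quad\text{in } \Omega.
\end{align*}
The right-hand sides lie in $L^p$: since $\beta - \omega_0 \in W^{d,p}_T(\Omega,\Lambda^k)$, the form $d(\beta-\omega_0) \in L^p(\Omega,\Lambda^{k+1})$, while $\omega_0\in W^{1,p}(\Omega,\Lambda^k)$ gives $\delta\omega_0\in L^p(\Omega,\Lambda^{k-1})$. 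The compatibility conditions $d(d(\beta-\omega_0))=0$ and $\delta(\delta\omega_0)=0$ are automatic from $d^2=\delta^2=0$.

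Apply the cited theorems of Csat\'o--Dacorogna--Kneuss (for $p\ge 2$) and Sil (for $p>1$) on the solvability of the div-curl type system with prescribed tangential data. These furnish a form $\eta\in W^{1,p}(\Omega,\Lambda^k)$ with $t\eta=0$ on $\partial\Omega$ satisfying the displayed system, together with the estimate
\begin{align*}
\norm{\eta}_{W^{1,p}(\Omega,\Lambda^k)} \le C\bigl(\norm{d(\beta-\omega_0)}_{L^p(\Omega,\Lambda^{k+1})} + \norm{\delta\omega_0}_{L^p(\Omega,\Lambda^{k-1})}\bigr).
\end{align*}
Setting $\omega := \omega_0 + \eta$ yields a solution, and bounding $\norm{d\omega_0}_{L^p}, \norm{\delta\omega_0}_{L^p} \le \norm{\omega_0}_{W^{1,p}}$ together with $\norm{d(\beta-\omega_0)}_{L^p} \le \norm{d\beta}_{L^p} + \norm{d\omega_0}_{L^p}$ gives the claimed inequality.

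For uniqueness, if $\omega_1, \omega_2$ are two solutions, then $h := \omega_1-\omega_2 \in W^{1,p}(\Omega,\Lambda^k)$ satisfies $dh=0$, $\delta h=0$ in $\Omega$ and $\nu^\flat\wedge h=0$ on $\partial\Omega$, so $h\in \mathcal{H}_T(\Omega,\Lambda^k)$. As $\Omega$ is contractible and $0\le k\le N-1$, the facts recalled at the start of Section~\ref{sec:min} give $\mathcal{H}_T(\Omega,\Lambda^k)=\{0\}$, hence $h=0$. The main subtlety I expect is to check that the cited theorems indeed apply with data of the precise regularity at hand: the input $d(\beta-\omega_0)$ is only in $L^p$ (not $W^{1,p}$), and one must be careful that the compatibility at $\partial\Omega$ is interpreted in the correct weak sense, which ultimately rests on the fact that $\beta-\omega_0$ has vanishing tangential trace in the distributional sense built into $W^{d,p}_T$.
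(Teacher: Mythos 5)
Your proof matches the paper's intended argument: the paper itself gives no detailed proof, merely observing that the corollary follows from \cite[Theorem 7.2]{CsaGyuDar12} (for $p\ge 2$) and \cite[Theorem 2.43]{SilT16} (for $p>1$) together with the triviality of $\mathcal{H}_T(\Omega,\Lambda^k)$ on contractible domains, which is exactly the reduction and uniqueness argument you carry out. Your explicit splitting $\omega=\omega_0+\eta$ and the verification of the compatibility conditions are a correct fleshing-out of that citation, so the proposal is sound and essentially identical in approach.
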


\subsection{Existence of minimizers}

In this section $0\le k\le N-1$, ~$\Omega$ is a bounded contractible $C^3$ domain in $\setR^N$,  $\Phi:\Omega \times[0,+\infty)\to [0,+\infty)$ is a generalized Orlicz function satisfying \eqref{eq:growth1}, $b\in L^{\Phi^*(\cdot)}(\Omega,\Lambda^{N-k-1})$. We study the existence of solutions to the following two variational problems.

\noindent (W-minimization). Let $\omega_0\in W^{1,\Phi(\cdot)}(\Omega, \Lambda^{k})$ and minimize $\mathcal{F}_{\Phi,b}$ over the set $\omega_0+ W^{1,\Phi(\cdot)}_T(\Omega,\Lambda^k)$:
\begin{equation}\label{VarProbl}
 \mathcal F_{\Phi,b}(\omega)=\int\limits_\Omega \Phi(x,|d\omega|)\, dV + \int\limits_\Omega b\wedge d\omega\ \rightarrow \min, \quad \omega\in \omega_0+ W^{1,\Phi(\cdot)}_T(\Omega,\Lambda^k).
\end{equation}

\noindent (H-minimization) Let $\omega_0\in H^{1,\Phi(\cdot)}(\Omega, \Lambda^{k})$ and minimize $\mathcal{F}_{\Phi,b}$ over the set $\omega_0+ H^{1,\Phi(\cdot)}_T(\Omega,\Lambda^k)$:
\begin{equation}\label{VarProbl1}
 \mathcal F_{\Phi,b}(\omega) \rightarrow \min, \quad \omega\in \omega_0+ H^{1,\Phi(\cdot)}_T(\Omega,\Lambda^k).
\end{equation}


\begin{theorem}\label{th:existW}
The variational problem \eqref{VarProbl} has a minimizer $\omega \in \omega_0+W^{d,\Phi(\cdot)}_{T}(\Omega,\Lambda^{k})$ with $\delta \omega=0$.
\end{theorem}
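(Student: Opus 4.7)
The proof is by the direct method of the calculus of variations combined with gauge fixing. First I would check coercivity on the admissible class: for any $\omega \in \omega_0 + W^{1,\Phi(\cdot)}_T(\Omega,\Lambda^{k})$, Young's inequality in the $L^{\Phi(\cdot)}$--$L^{\Phi^{*}(\cdot)}$ duality gives
\[
\Bigl|\int_\Omega b\wedge d\omega\Bigr| \le \tfrac12\int_\Omega \Phi(x,|d\omega|)\,dV + C\int_\Omega \Phi^{*}(x,|b|)\,dV,
\]
so by the lower bound in \eqref{eq:growth1} the functional $\mathcal F_{\Phi,b}$ is bounded below, and any minimizing sequence $\omega_n$ satisfies uniform bounds on the modular $\int_\Omega\Phi(x,|d\omega_n|)\,dV$ and, using \eqref{eq:growth1} again, on $\|d\omega_n\|_{L^{p_-}(\Omega,\Lambda^{k+1})}$.

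The decisive step is gauge fixing. Because $\mathcal F_{\Phi,b}$ sees $\omega$ only through $d\omega$, and because the boundary condition $\omega - \omega_0 \in W^{1,\Phi(\cdot)}_T$ amounts to $\nu^\flat\wedge\omega = \nu^\flat\wedge\omega_0$, I would apply Corollary \ref{Forms:existence1} with $p = p_-$, $\beta = \omega_n$ to produce $\tilde\omega_n \in W^{1,p_-}(\Omega,\Lambda^k)$ satisfying $d\tilde\omega_n = d\omega_n$, $\delta\tilde\omega_n = 0$, $\nu^\flat\wedge\tilde\omega_n = \nu^\flat\wedge\omega_0$, and
\[
\|\tilde\omega_n\|_{W^{1,p_-}(\Omega,\Lambda^k)} \le C\bigl(\|\omega_0\|_{W^{1,p_-}(\Omega,\Lambda^k)} + \|d\omega_n\|_{L^{p_-}(\Omega,\Lambda^{k+1})}\bigr) \le C.
\]
Since $\mathcal F_{\Phi,b}(\tilde\omega_n) = \mathcal F_{\Phi,b}(\omega_n)$, the gauge-fixed sequence is still minimizing. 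The $\Delta_2$ and $\nabla_2$ hypotheses from Section \ref{sect:Orlicz} make $L^{\Phi(\cdot)}$ reflexive and convert the modular bound into a norm bound on $d\tilde\omega_n$ in $L^{\Phi(\cdot)}(\Omega,\Lambda^{k+1})$.

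Extracting subsequences (not relabelled), $\tilde\omega_n \rightharpoonup \omega$ weakly in $W^{1,p_-}$ and strongly in $L^{p_-}$, while $d\tilde\omega_n \rightharpoonup d\omega$ weakly in $L^{\Phi(\cdot)}$. The conditions $\delta\omega = 0$ and $\nu^\flat\wedge(\omega-\omega_0) = 0$ pass to the limit through the integration-by-parts characterization of Definition \ref{def:SobolevT}, using weak $L^{\Phi(\cdot)}$ convergence of $d\tilde\omega_n$ and strong $L^{p_-}$ convergence of $\tilde\omega_n$ against smooth test forms, placing $\omega$ in $\omega_0 + W^{d,\Phi(\cdot)}_T(\Omega,\Lambda^k)$ with $\delta\omega = 0$. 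Convexity of $t\mapsto\Phi(x,t)$ makes the modular $\omega\mapsto\int_\Omega\Phi(x,|d\omega|)\,dV$ convex and strongly lower semicontinuous, hence weakly lower semicontinuous in $L^{\Phi(\cdot)}$ by Mazur's lemma; the linear term $\omega\mapsto\int_\Omega b\wedge d\omega$ is continuous for that topology since $b\in L^{\Phi^{*}(\cdot)}$. Combining,
\[
\mathcal F_{\Phi,b}(\omega) \le \liminf_{n\to\infty}\mathcal F_{\Phi,b}(\tilde\omega_n) = \inf_{\omega_0 + W^{1,\Phi(\cdot)}_T}\mathcal F_{\Phi,b},
\]
so $\omega$ is a minimizer.

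The main obstacle is the mismatch between the class $\omega_0 + W^{1,\Phi(\cdot)}_T$ in which the variational problem is posed and the class $\omega_0 + W^{d,\Phi(\cdot)}_T$ in which the minimizer naturally lives. No Gaffney-type inequality in the Orlicz norm is assumed, so gauge fixing only yields $W^{1,p_-}$ (not $W^{1,\Phi(\cdot)}$) regularity for $\tilde\omega_n$; this is reconciled by the fact that the gauge-fixed sequence has exactly the same energy as the original minimizing sequence, so the infima over the two classes coincide and the extracted limit achieves them both.
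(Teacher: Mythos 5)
Your proposal is correct and follows essentially the same route as the paper: gauge fixing of the minimizing sequence via Corollary \ref{Forms:existence1} with $p=p_-$, extraction of a weakly convergent subsequence of the gauge-fixed forms, and weak lower semicontinuity of the convex modular via Mazur's lemma. Your explicit Young-inequality coercivity estimate and your closing remark reconciling the classes $\omega_0+W^{1,\Phi(\cdot)}_T$ and $\omega_0+W^{d,\Phi(\cdot)}_T$ are points the paper leaves implicit, but they do not change the argument.
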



\begin{proof}
    Let~$\omega_s$ be a minimizing sequence, clearly  \begin{align*}
    \norm{d\omega_s}_{L^{\Phi(\cdot)}(\Omega, \Lambda^{k+1})}\le c.
    \end{align*}      
Due to the coercitivity condition~\eqref{eq:growth} we have~
    \begin{align*}
    \norm{d\omega_s}_{L^{p_{-}}(\Omega, \Lambda^{k+1})}\le c.
    \end{align*}
By Corollary~\ref{Forms:existence1} there exists~$\alpha_s\in \omega_0+ W^{1,p_{-}}_{T}(\Omega, \Lambda^{k})$ satisfying $d\alpha_s=d\omega_s$ and $\delta \alpha_s=0$ in $\Omega$
such that
\begin{align*}
    \norm{\alpha_s}_{W^{1,p_{-}}(\Omega,\Lambda^{k})}\leq c.
\end{align*}
Clearly~$\alpha_s\in W^{d,\Phi(\cdot)}(\Omega,\Lambda^{k})$  and
 \begin{align*}
    \norm{\alpha_s}_{W^{d,\Phi(\cdot)}(\Omega,\Lambda^{k})} \le c.
    \end{align*}
The sequence $\alpha_s$ is bounded in the space $W^{d,\Phi(\cdot)}(\Omega, \Lambda^{k}) \cap W^{1,p_{-}}(\Omega,\Lambda^{k})$ endowed with the norm which is the sum of norms in $W^{d,\Phi(\cdot)}(\Omega, \Lambda^{k})$ and $W^{1,p_{-}}(\Omega,\Lambda^{k})$. Its dual space is separable, therefore there exists
$$
\alpha\in\omega_0+ W^{d,\Phi(\cdot)}_{ T}(\Omega, \Lambda^{k}) \cap W^{1,p_{-}}(\Omega,\Lambda^{k})
$$
such that $\delta\alpha=0$ and up to the subsequence,
\begin{gather*}
    \alpha_s\to \alpha\quad \text{in }\quad L^{p_{-}}(\Omega,\Lambda^{k}),\\
d\alpha_s \rightharpoonup d\alpha    \quad \text{in }\quad L^{\Phi(\cdot)}(\Omega,\Lambda^{k+1}). 
\end{gather*}
    Due to the lower-semicontinuity of $\mathcal{F}$, which follows from the convexity of~$\Phi(x,\cdot)$ and Mazur's lemma,  we have
        \begin{align*}
            \liminf_{s\to \infty} \int\limits_\Omega \Phi(x,|d\alpha_s|)\, dV \ge \int\limits_\Omega \Phi(x,|d\alpha|)\, dV.
     \end{align*}
 Since in the linear part we have convergence,  the proof is complete.

\end{proof}



\begin{theorem}\label{th:existH}
Let $\omega_0$ in~$H^{1,\Phi(\cdot)}(\Omega, \Lambda^{k}(\setR^N))$. Then the problem \eqref{VarProbl1} has a minimizer $\omega \in \omega_0 +H^{d,\Phi(\cdot)}_{ T}(\Omega,\Lambda^{k})$ with $\delta \omega=0$.
\end{theorem}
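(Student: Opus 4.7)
My plan is to parallel the proof of Theorem~\ref{th:existW}, with one extra step verifying that the weak limit of the gauge-fixed minimizing sequence remains in the $H$-class. The guiding observation is that $\mathcal{F}_{\Phi,b}$ depends on $\omega$ only through $d\omega$, so any gauge transformation preserving $d\omega$ leaves the functional value unchanged, and I may freely replace the minimizing sequence by its gauge-fixed counterpart.

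First I would pick a minimizing sequence $\omega_s\in \omega_0+H^{d,\Phi(\cdot)}_T(\Omega,\Lambda^k)$; the coercivity in~\eqref{eq:growth1} gives $\norm{d\omega_s}_{L^{\Phi(\cdot)}}+\norm{d\omega_s}_{L^{p_-}}\le c$, and Corollary~\ref{Forms:existence1} produces a gauge-fixed $\alpha_s\in\omega_0+W^{1,p_-}_{T}(\Omega,\Lambda^k)$ with $d\alpha_s=d\omega_s$, $\delta\alpha_s=0$, and $\nu^\flat\wedge\alpha_s=\nu^\flat\wedge\omega_0$ on $\partial\Omega$. Exactly as in Theorem~\ref{th:existW}, $\alpha_s$ is bounded in $W^{d,\Phi(\cdot)}\cap W^{1,p_-}$, so a subsequence converges weakly to a limit $\alpha$ with $\delta\alpha=0$ and $d\alpha_s\rightharpoonup d\alpha$ in $L^{\Phi(\cdot)}$. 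Since $\mathcal{F}_{\Phi,b}(\alpha_s)=\mathcal{F}_{\Phi,b}(\omega_s)$, convexity of $\Phi(x,\cdot)$ combined with Mazur's lemma yields lower semicontinuity of the principal part while the linear term converges, so $\alpha$ minimizes $\mathcal{F}_{\Phi,b}$.

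The new step is to show $\alpha\in \omega_0+H^{d,\Phi(\cdot)}_T$. Because $H^{d,\Phi(\cdot)}_T$ is a closed linear subspace of $W^{d,\Phi(\cdot)}$, it is weakly closed, and it suffices to check $\alpha_s-\omega_0\in H^{d,\Phi(\cdot)}_T$ for every $s$. I would split
\[
\alpha_s-\omega_0=(\omega_s-\omega_0)+(\alpha_s-\omega_s).
\]
The first summand is in $H^{d,\Phi(\cdot)}_T$ by choice of the minimizing sequence. For the second, $d(\alpha_s-\omega_s)=0$ and $t(\alpha_s-\omega_s)=0$ on $\partial\Omega$; since $\Omega$ is contractible, the classical Hodge-theoretic Poincar\'e lemma for closed $L^{p_-}$-forms with vanishing tangential trace on a $C^3$ contractible domain (see~\cite{IwaLut93,Schwarz}) furnishes $\eta_s\in W^{1,p_-}(\Omega,\Lambda^{k-1})$ with $d\eta_s=\alpha_s-\omega_s$ and $t\eta_s=0$. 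Approximating $\eta_s$ by $C_0^\infty(\Omega,\Lambda^{k-1})$-forms in $W^{1,p_-}$ and applying $d$ then places $d\eta_s$ in $H^{d,\Phi(\cdot)}_T$.

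The hardest part will be this last step: producing the primitive $\eta_s$ with both the right global integrability (so that $d\eta_s$ has the required $L^{\Phi(\cdot)}$ behaviour) and the correct vanishing tangential trace, in a form sufficiently flexible to be approximated by compactly supported smooth forms in the $W^{d,\Phi(\cdot)}$-norm. Once this Poincar\'e-type solvability is in hand, weak closedness of the affine $H$-class and routine repetition of the $W$-case argument complete the proof.
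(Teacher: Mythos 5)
Your argument is correct in outline, but it reaches the conclusion by a genuinely different route from the paper at the one point where the two proofs must differ, namely in showing that the gauge-fixed competitors stay in the $H$-class. The paper modifies the minimizing sequence itself: writing $\alpha_s=\omega_0+\beta_s$ with $d\beta_s=d\gamma_s\in C_0^\infty(\Omega,\Lambda^{k+1})$, it replaces $\beta_s$ by dilated pullbacks $\varphi_t^*\beta_s$ followed by mollification, and the smoothness of $d\gamma_s$ is exactly what makes the differentials converge uniformly, hence in $L^{\Phi(\cdot)}$; after this surgery the competitors are literally of the form $\omega_0+C_0^\infty$ and Mazur's lemma finishes. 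You instead keep the sequence and prove $\alpha_s-\omega_0\in H^{d,\Phi(\cdot)}_T$ directly by splitting off the exact correction $\alpha_s-\omega_s=d\eta_s$. This works, and the step you flag as hardest is in fact the easiest part: for exact $k$-forms the $L^{\Phi(\cdot)}$-portion of the $W^{d,\Phi(\cdot)}$-norm vanishes identically (since $d\circ d=0$), so approximating $d\eta_s$ by $d\zeta_j$ with $\zeta_j\in C_0^\infty(\Omega,\Lambda^{k-1})$ only requires $d\zeta_j\to d\eta_s$ in $L^1$, which is precisely the constant-exponent density $W^{d,p_-}_T=H^{d,p_-}_T$ quoted in Section~\ref{sec:Sobolev}. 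Two points need care. First, take the minimizing sequence in $\omega_0+H^{1,\Phi(\cdot)}_T$ as \eqref{VarProbl1} prescribes, not in $\omega_0+H^{d,\Phi(\cdot)}_T$: otherwise $\alpha_s-\omega_s$ is only an $L^1$ closed form and the solvability you invoke for the primitive is unavailable at the exponent $1$, whereas with the correct class $\alpha_s-\omega_s\in L^{p_-}$ and the Hodge--Morrey decomposition on a contractible domain (where $\mathcal{H}_T(\Omega,\Lambda^k)=\{0\}$) indeed produces $\eta_s\in W^{1,p_-}(\Omega,\Lambda^{k-1})$ with $t\eta_s=0$, see \cite{CsaGyuDar12,IwaLut93}. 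Second, the weak closedness argument should be run for the norm-closed subspace $H^{d,\Phi(\cdot)}_T$ of $W^{d,\Phi(\cdot)}$ using that $\alpha_s\to\alpha$ in $L^{p_-}$ together with $d\alpha_s\rightharpoonup d\alpha$ in $L^{\Phi(\cdot)}$ gives weak convergence in $W^{d,\Phi(\cdot)}$. As for what each approach buys: the paper's surgery produces an explicit minimizing sequence of the form $\omega_0+C_0^\infty$, which is more constructive, while your decomposition is shorter and isolates cleanly the single analytic input, the $L^{p_-}$ Poincar\'e lemma with vanishing tangential trace.
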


\begin{proof}
We  keep the notation from the proof of Theorem~\ref{th:existW}. Let~$\omega_s=\omega_0+\gamma_s$, $\gamma_s\in C_0^\infty(\Omega, \Lambda^k)$, be a minimizing sequence, clearly  
$$
    \norm{d\omega_s}_{L^{\Phi(\cdot)}(\Omega, \Lambda^{k+1})},\ \norm{d\gamma_s}_{L^{\Phi(\cdot)}(\Omega, \Lambda^{k+1})}\le c.
$$    
Due to the coercitivity condition~\eqref{eq:growth} we have~
$$
    \norm{d\omega_s}_{L^{p_{-}}(\Omega, \Lambda^{k+1})},\ \norm{d\gamma_s}_{L^{p_{-}}(\Omega, \Lambda^{k+1})}\le c.
$$
By Corollary~\ref{Forms:existence1} there exists~$\alpha_s\in \omega_0+ W^{1,p_{-}}_{T}(\Omega, \Lambda^{k})$ satisfying $d\alpha_s=d\omega_s$ and $\delta \alpha_s=0$ in $\Omega$
such that
$$
\norm{\alpha_s}_{W^{1,p_{-}}(\Omega,\Lambda^{k})}\leq c.
$$
Writing $\alpha_s = \omega_0+\beta_s$, one gets $\beta_s \in W^{1,p_{-}}_{T}(\Omega, \Lambda^{k})$ satisfying $d\beta_s = d \gamma_s$, $\delta \beta_s = - \delta \omega_0$. Extend $\beta_s$ to $\mathbb{R}^N\setminus \Omega$ by zero.

Clearly~$\alpha_s\in W^{d,\Phi(\cdot)}(\Omega,\Lambda^{k})$  and
$$
    \norm{\alpha_s}_{W^{d,\Phi(\cdot)}(\Omega,\Lambda^{k})},\ \norm{\beta_s}_{W^{d,\Phi(\cdot)}(\Omega,\Lambda^{k})} \le c.
$$
    
Let $\varphi_t:\Omega\times [0,1]\to\mathbb{R}^N$, $t\in (0,1)$, be $C^2$ mapping such that $\varphi_1=\mathrm{id}$, and $\varphi_t^{-1}\Omega \Subset \Omega$. If $\Omega$ is a ball centered at the origin, one takes $\varphi_t(x) = x/t$. Consider the pullbacks $\varphi_t^* \beta_s$. These forms have compact support in $\Omega$, with $d \varphi_t^* \beta_s$ uniformly converging to $d\beta_s=d\gamma_s\in C_0^\infty(\Omega,\Lambda^{k+1})$ and $\delta \varphi_t^* \beta_s$ converging to $-\delta \omega_0$ in $L^{p_{-}}(\Omega,\Lambda^{k-1})$ as $t\to 1-0$. Moreover, $\varphi_t^* \beta_s$ converges to $\beta_s$ in $W^{1,p_{-}}(\Omega,\Lambda^{k-1})$ as $t\to 1-0$.

Mollifications $(\varphi_t^* \beta_s)_\varepsilon = \chi_\varepsilon\ast \varphi_t^* \beta_s$, where $\chi_\varepsilon(x) = \varepsilon^{-d} \chi(x/\varepsilon)$, $\chi \in C_0^\infty(\{|x|<1\})$ with $\int \chi \, dx =1$,  converge to $\varphi_t^* \beta_s$ in $L^{p_{-}}(\Omega,\Lambda^k)$, $d (\varphi_t^* \beta_s)_\varepsilon$ converges uniformly to $d\varphi_t^* \beta_s$, $(\varphi_t^* \beta_s)_\varepsilon\to \varphi_t^* \beta_s$ in $W^{1,p_{-}}(\Omega,\Lambda^k)$, and $\delta  (\varphi_t^* \beta_s)_\varepsilon\to \delta \varphi_t^* \beta_s $ in $L^{p_{-}}(\Omega,\Lambda^{k-1})$ as $\varepsilon \to 0$. Clearly, $(\varphi_t^* \beta_s)_\varepsilon \in C_0^\infty(\Omega,\Lambda^k)$ for sufficiently small $\varepsilon$.

Therefore, keeping the same notation for $\beta_s$ while replacing it by $(\varphi_t^* \beta_s)_\varepsilon$ for appropriate $t$ and $\varepsilon$, we can assume that the new minimizing sequence has the form $\alpha_s = \omega_0 + \beta_s$, where $\beta_s\in C_0^\infty(\Omega,\Lambda^k)$, $\beta_s$ is uniformly bounded in $W^{1,p_{-}}(\Omega,\Lambda^k)$ and in $W^{d,\Phi(\cdot)}(\Omega,\Lambda^k)$, and $\|\delta (\omega_0+\beta_s)\|_{L^{p_{-}}(\Omega,\Lambda^{k-1})} < 1/s$.


Therefore there exists
$$
\beta\in W^{d,\Phi(\cdot)}_{T}(\Omega, \Lambda^{k}) \cap L^{p_{-}}(\Omega,\Lambda^{k})
$$
such that $\delta(\omega_0+\beta)=0$ and up to the subsequence,
\begin{gather*}
\beta_s\to \beta\quad \text{in }\quad W^{1,p_{-}}(\Omega,\Lambda^{k}),\\
d\beta_s \rightharpoonup d\beta \quad \text{in }\quad L^{\Phi(\cdot)}(\Omega,\Lambda^{k+1}). 
\end{gather*}
    Due to the convexity of~$\Phi(x,\cdot)$ and Mazur's lemma,  we have $\beta \in H_T^{1,\Phi(\cdot)}(\Omega,\Lambda^k)$ and for $\alpha=\omega_0+\beta$ there holds
        \begin{align*}
            \liminf_{s\to \infty} \int\limits_\Omega \Phi(x,|d\alpha_s|)\, dV \ge \int\limits_\Omega \Phi(x,|d\alpha|)\, dV.
        \end{align*}
 Since in the linear part we have convergence,  the proof is complete.

\end{proof}


\section{ Lavrentiev gap and non-density} \label{sec:framework}

In this section we design the general framework for the construction of the examples on Lavrentiev gap. We introduce the set of assumptions for the examples in the Section~\ref{sub:ass} and show how to obtain non-density of smooth functions and the special type of the non-uniqueness of the minimisers  under these assumptions. 
In Section \ref{sec:Basic} we introduce basic forms which will be building blocks of our examples. These building blocks correspond to the one saddle-point geometry of the classical checkerboard   Zhikov example and are then used in Sections  \ref{sec:Cantor} and \ref{sect:frac} to construct more advanced examples using fractal Cantor barriers.  The results are summarised in the subsection~\ref{sub:work}. 

\subsection{Separating pairs of forms and separating functionals}\label{sub:ass}
Here we present some ``conditional'' statements. We shall use two assumptions. Let $\Omega$ be a domain in $\mathbb{R}^N$ with sufficiently regular boundary, $k\in \{1,\ldots,N-1\}$, and $\frS \subset \Omega$ be a closed set of zero Lebesgue $N$-measure. Our argument will be based upon defining a suitable set $\frS$ and $(k-1)$-form $u$ and $(N-k-1)$-form $A$, which are smooth in $\Omega\setminus \frS$ and give a  ``counterexample'' to the Stokes theorem.  The regularity of $\partial \Omega$ is assumed to be such that the classical Stokes theorem holds. Further $\Omega$ will be either cube of ball in $\mathbb{R}^N$.


Let $\Phi:\Omega \times [0,+\infty)\to [0,+\infty)$ be a generalized Orlicz function.

\begin{definition}
 We say that a pair of $(k-1)$-form and $(N-k-1)$-form $(u,A)$ defined in $\Omega$ is $(\Phi,k)$-separating if there exists a closed set $\frS\subset \Omega$ of zero Lebesgue $N$-measure such that

\begin{enumerate}[label=(\roman*)]
\item $u$ and $A$ are regular outside $\frS$;

\item  $u\in W^{d,1} (\Omega, \Lambda^{k-1})$ and $A\in W^{d,1}(\Omega, \Lambda^{N-k-1})$;

\item $\int\limits_{\partial \Omega} A\wedge du =1$;

\item $|du|\cdot |dA|=0 \quad \text{in }\quad \Omega\setminus \frS$;

\item  $\int\limits_\Omega \Phi(x,|du|)\, dV <\infty$ and  $\int\limits_\Omega \Phi^*(x,|dA|)\, dV< \infty$.
\end{enumerate}
When invoking a pair of $(\Phi,k)$-separating forms we assume that the set $\frS$ comes from this definition and when necessary denote it by $\frS (u,A)$.
\end{definition}

The essential property of $(\Phi,k)$-separating forms is that $A\wedge du$ ``contradicts'' the Stokes theorem. Indeed, disregarding the singular set $\Sigma$ we would arrive at
$$
0=\int\limits_\Omega dA\wedge du = \int\limits_\Omega d(A\wedge du)= \int\limits_{\partial \Omega} A \wedge du=1.
$$


\begin{definition}
  \label{def:localuAb}
 Let $u$ and $A$ be a pair of $(\Phi,k)$-separating forms and $\eta \in C^\infty_0(\Omega)$ with $\eta=1$ in a neighbourhood of $\frS$. 
Set
  \begin{align*}
    u^\circ &= \eta u, &  u^\partial &= (1-\eta) u,
    \\
    A^\circ &= \eta A, &  A^\partial &= (1-\eta) A.
  \end{align*}
On~$W^{d,\Phi(\cdot)}(\Omega,\Lambda^{k-1})$ we define the functionals $\mathcal S$, $\mathcal S^\circ$, and $\mathcal S^\partial$ by  
$$
\mathcal S(w):=\int\limits_\Omega dA\wedge d w, \quad
\mathcal S^\circ(w):= \int\limits_\Omega dA^\circ \wedge d w, \quad 
\mathcal S^\partial(w):= \int\limits_\Omega dA^\partial \wedge d w. 
$$
\end{definition}

\begin{proposition}[Separating functional] \label{prop:separ}
The following holds
\begin{enumerate}

\item $\mathcal S, \mathcal S^\circ, \mathcal S^\partial  $ define linear functionals on~$W^{d,\Phi(\cdot)}(\Omega,\Lambda^{k-1})$. 

 \item For all~$w\in H^{d,\Phi(\cdot)}(\Omega,\Lambda^{k-1})$ we have $ S^\circ(w)=0$.
\item For the functions~$u,u^\partial, u^\circ$ it holds:
\begin{align*}
 \mathcal{S}(u) &= 0,\quad \mathcal{S}(u^\partial) = 1,\quad \mathcal{S}(u^\circ) =
    -1,\\
 \mathcal{S^\partial}(u)& = 1,\quad 
    \mathcal{S^\partial}(u^\partial) = 1,\quad 
    \mathcal{S^\partial}(u^\circ) = 0,\\
    \mathcal{S}^\circ(u) &= -1,\quad  \mathcal{S}^\circ(u^\partial) =
    0,\quad \mathcal{S}^\circ(u^\circ) = 
    -1.  
\end{align*}

\end{enumerate}

 \end{proposition}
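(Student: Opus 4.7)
My plan is to handle the three items in order, relying throughout on the fact that the cutoff~$\eta$ neutralizes the singular set~$\frS$ in every Stokes computation---by arranging either that one factor in a wedge product is smooth on all of~$\Omega$, or that the product has compact support in~$\Omega$.

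For Item~(1), linearity is immediate, and continuity on~$W^{d,\Phi(\cdot)}(\Omega,\Lambda^{k-1})$ will follow from $|dA\wedge dw|\le |dA|\,|dw|$ combined with H\"older's inequality for generalized Orlicz spaces, once I verify that $dA$, $dA^\circ$, $dA^\partial$ all lie in $L^{\Phi^*(\cdot)}(\Omega,\Lambda^{N-k})$. Property~(v) handles~$dA$; for the localized versions I use the Leibniz rule $dA^\circ=d\eta\wedge A+\eta\,dA$, noting that the second summand is pointwise dominated by $|dA|$, while the first is supported on the compact set $\mathrm{supp}(d\eta)\Subset\Omega\setminus\frS$ on which~$A$ is smooth and thus bounded. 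The same splitting controls $dA^\partial=-d\eta\wedge A+(1-\eta)dA$.

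For Item~(2), I approximate $w\in H^{d,\Phi(\cdot)}$ by a smooth sequence $w_n\to w$ in the $W^{d,\Phi(\cdot)}$-norm. Since $A^\circ=\eta A$ has compact support in $\Omega$, so does $A^\circ\wedge dw_n$, and the smoothness of $w_n$ forces $d^2 w_n=0$. The Leibniz rule then gives $d(A^\circ\wedge dw_n)=dA^\circ\wedge dw_n$, and Stokes on a compactly supported form yields $\mathcal{S}^\circ(w_n)=0$; the continuity established in Item~(1) passes this to $\mathcal{S}^\circ(w)=0$.

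For Item~(3), it suffices to compute four elementary values and deduce the remaining five from linearity together with the identity $\mathcal{S}=\mathcal{S}^\circ+\mathcal{S}^\partial$ (which comes from $dA=dA^\circ+dA^\partial$). The elementary values are: $\mathcal{S}(u)=0$, because $|dA|\,|du|$ vanishes a.e.\ outside the null set $\frS$ by property~(iv); $\mathcal{S}^\partial(u^\partial)=1$, because both $A^\partial$ and $u^\partial$ are smooth on $\Omega$ (they vanish near $\frS$ and are classically smooth elsewhere) and coincide with $A$ and $u$ respectively near $\partial\Omega$, so Stokes together with property~(iii) gives $\int_{\partial\Omega}A^\partial\wedge du^\partial=\int_{\partial\Omega}A\wedge du=1$; $\mathcal{S}^\partial(u^\circ)=0$, because $u^\circ$ has compact support in $\Omega$ and hence so does $A^\partial\wedge du^\circ$; and $\mathcal{S}^\circ(u^\partial)=0$, because the smooth form $u^\partial$ belongs to $W^{d,\Phi(\cdot)}\cap C^\infty\subset H^{d,\Phi(\cdot)}$ (using that $du^\partial=-d\eta\wedge u+(1-\eta)du$ lies in $L^{\Phi(\cdot)}$ by the same argument as in Item~(1)), so that Item~(2) applies. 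The only point of substance in the entire argument is this routine integrability check for the commutator terms $d\eta\wedge A$ and $d\eta\wedge u$, which explains why the proposition is indeed straightforward.
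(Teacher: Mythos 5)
Your proposal is correct and follows essentially the same route as the paper: the same four elementary evaluations ($\mathcal{S}(u)=0$ from property (iv), $\mathcal{S}^\partial(u^\partial)=1$ from Stokes and property (iii), and the two vanishing cross terms), with the remaining entries filled in by linearity and $\mathcal{S}=\mathcal{S}^\circ+\mathcal{S}^\partial$. The only cosmetic differences are that you spell out the Leibniz-rule integrability check for $d\eta\wedge A$ and $d\eta\wedge u$ (which the paper leaves implicit) and that you obtain $\mathcal{S}^\circ(u^\partial)=0$ by observing $u^\partial\in H^{d,\Phi(\cdot)}$ and citing item (2), whereas the paper applies Stokes directly to the compactly supported form $A^\circ\wedge du^\partial$; both are valid.
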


\begin{proof}

The first claim follows from $dA \in L^{\Phi^*}(\Omega,\Lambda^{N-k})$. Due to the Stokes theorem and by approximation for all~$\omega \in H^{d,\Phi(\cdot)}(\Omega,\Lambda^{k-1})$ it holds 
  \begin{align*}
    \int\limits_\Omega dA^\circ\wedge d\omega=\int\limits_{\partial \Omega}A^\circ \wedge d\omega=0.
  \end{align*}
Now   $du \wedge  d A=0$ almost everywhere, therefore $\mathcal{S}(u)=0$.
  Since
$u^\partial\in C^\infty(\overline{\Omega},\Lambda^{k-1})$ and $A^\partial \in C^\infty(\overline{\Omega},\Lambda^{N-k-1}(\mathbb{R}^{N}))$, we can use the Stokes theorem and  the third property of $(\Phi,k)$-separating pair to obtain
  \begin{align*}
    \mathcal{S}^\partial(u^\partial) &= \int\limits_\Omega dA ^\partial\wedge du^\partial 
    = \int\limits_{\partial \Omega} A^\partial \wedge du^\partial
      = \int\limits_{\partial \Omega} A \wedge du = 1.
  \end{align*}
Since $A^\circ \wedge du^\partial$ belongs to $C_0^\infty(\Omega , \Lambda^{N-1})$, and $d (A^\circ \wedge du^\partial)=  dA^\circ \wedge d u^\partial$, again by the Stokes theorem we get
  \begin{align*}
    \mathcal{S}^\circ(u^\partial) = \int_\Omega dA^\circ \wedge d u^\partial
      = 0. 
  \end{align*}
  Analogously, we obtain $\mathcal{S}^\partial(u^\circ)=0$. Now,
  \begin{align*}
    \mathcal{S}^\circ(u^\circ) &= 
    \mathcal{S}(u) -
    \mathcal{S}^\partial(u^\partial) -
    \mathcal{S}^\circ(u^\partial) - 
    \mathcal{S}^\partial(u^\circ) = 0 - 1 - 0 - 0 = -1.
  \end{align*}
  This proves the claim.
\end{proof}

\begin{corollary}\label{corr:nondense1}
If there exists a pair of $(\Phi,k)$-separating forms then 
$$
H^{d,\Phi(\cdot)}(\Omega,\Lambda^{k-1}) \neq W^{d,\Phi(\cdot)}(\Omega,\Lambda^{k-1}).
$$ 
\end{corollary}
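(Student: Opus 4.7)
The plan is to argue by contradiction, using the ``interior'' piece $u^\circ = \eta u$ from Definition~\ref{def:localuAb} as the explicit witness to the strict inclusion $H \subsetneq W$. Suppose to the contrary that $H^{d,\Phi(\cdot)}(\Omega,\Lambda^{k-1}) = W^{d,\Phi(\cdot)}(\Omega,\Lambda^{k-1})$. By Proposition~\ref{prop:separ}(2) the linear functional $\mathcal{S}^\circ$ then vanishes on every element of $W^{d,\Phi(\cdot)}(\Omega,\Lambda^{k-1})$. On the other hand, Proposition~\ref{prop:separ}(3) asserts $\mathcal{S}^\circ(u^\circ) = -1$. A contradiction follows the moment we know that $u^\circ$ actually lies in $W^{d,\Phi(\cdot)}(\Omega,\Lambda^{k-1})$.

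Verifying this membership is the only nontrivial step. Since $\eta$ is a smooth $0$-form we have $d u^\circ = d\eta \wedge u + \eta\, du$. The second summand satisfies $|\eta\, du| \le \|\eta\|_\infty |du|$, so lies in $L^{\Phi(\cdot)}(\Omega, \Lambda^k)$ by property (v) of the separating pair together with the $\Delta_2$-condition~\eqref{eq:phi-Delta2}, which upgrades finiteness of the modular to membership in $L^{\Phi(\cdot)}$ after rescaling. For the first summand, observe that $\eta \equiv 1$ in a neighbourhood of $\frS$, so $d\eta$ is supported in the open set $\Omega \setminus \frS$ on which $u$ is by hypothesis regular. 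Thus $d\eta \wedge u$ is smooth and bounded on $\Omega$, hence also in $L^{\Phi(\cdot)}$. Combined with $u \in L^1(\Omega, \Lambda^{k-1})$ (inherited from $u \in W^{d,1}$), these estimates place $u^\circ$ in $W^{d,\Phi(\cdot)}(\Omega, \Lambda^{k-1})$.

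The main difficulty, as is typical for arguments of this kind, is not in the present corollary — whose proof reduces to the one-line contradiction above — but in exhibiting genuine $(\Phi,k)$-separating pairs within the models of interest. That constructive task will occupy the later sections via the fractal Cantor barrier construction; once such pairs have been produced for the double phase, borderline double phase, and variable exponent integrands, Corollary~\ref{corr:nondense1} will immediately deliver the non-density portions of Theorems~\ref{theoremA}, \ref{theoremB}, and \ref{theoremC}.
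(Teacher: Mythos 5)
Your proof is correct and follows essentially the same route as the paper: both rest on Proposition~\ref{prop:separ}, which gives $\mathcal{S}^\circ=0$ on $H^{d,\Phi(\cdot)}(\Omega,\Lambda^{k-1})$ while $\mathcal{S}^\circ$ takes the value $-1$ on an explicit element of $W^{d,\Phi(\cdot)}(\Omega,\Lambda^{k-1})$. The only (immaterial) difference is your choice of witness: you use $u^\circ=\eta u$, which forces the extra verification that $d\eta\wedge u$ is bounded, whereas the paper uses $u$ itself, whose membership in $W^{d,\Phi(\cdot)}(\Omega,\Lambda^{k-1})$ is immediate from properties (ii) and (v) of a separating pair.
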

\begin{proof}
By Proposition~\ref{prop:separ}, $\mathcal{S}^\circ=0$ on $H^{d,\Phi(\cdot)}(\Omega,\Lambda^{k-1})$. On the other hand, $u\in W^{d,\Phi(\cdot)}(\Omega,\Lambda^{k-1})$ and $\mathcal{S}^\circ (u) = -1$.
\end{proof}

\begin{corollary}\label{corr:nondense2}
If there exists a pair of $(\Phi,k)$-separating forms then 
$$
H^{d,\Phi(\cdot)}_T(\Omega,\Lambda^{k-1}) \neq \tilde W^{d,\Phi(\cdot)}_T(\Omega,\Lambda^{k-1}).
$$  
\end{corollary}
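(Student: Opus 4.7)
The plan is to produce a form in $\tilde W^{d,\Phi(\cdot)}_T(\Omega,\Lambda^{k-1})$ on which the linear functional $\mathcal{S}^\circ$ from Definition~\ref{def:localuAb} is nonzero, and to observe that $\mathcal{S}^\circ$ vanishes on $H^{d,\Phi(\cdot)}_T(\Omega,\Lambda^{k-1})$. The natural candidate is $u^\circ=\eta u$, which is the same witness that was used in Corollary~\ref{corr:nondense1}.

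First I would check that $u^\circ\in W^{d,\Phi(\cdot)}_c(\Omega,\Lambda^{k-1})$, which automatically places it in the closure $\tilde W^{d,\Phi(\cdot)}_T(\Omega,\Lambda^{k-1})$. The form $u^\circ$ has compact support in $\Omega$ since $\eta\in C^\infty_0(\Omega)$, and since $u\in W^{d,1}\subset L^1$ also $u^\circ\in L^1$. By the Leibniz rule $du^\circ=d\eta\wedge u+\eta\, du$. The term $\eta\, du$ is pointwise dominated by $|du|$, which is $\Phi$-integrable by property~(v) of a $(\Phi,k)$-separating pair. The term $d\eta\wedge u$ is supported in the set where $\eta\not\equiv 1$, which is disjoint from a neighbourhood of $\frS$; there $u$ is regular, so $d\eta\wedge u$ is a smooth form with compact support in $\Omega$ and hence in $L^{\Phi(\cdot)}$.

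Second I would verify that $\mathcal{S}^\circ$ extends to a bounded linear functional on $W^{d,\Phi(\cdot)}(\Omega,\Lambda^{k-1})$ that vanishes on $H^{d,\Phi(\cdot)}_T(\Omega,\Lambda^{k-1})$. Boundedness follows from $dA^\circ\in L^{\Phi^*(\cdot)}(\Omega,\Lambda^{N-k})$ (checked exactly as for $du^\circ$ above, using property~(v) applied to $A$) together with the Orlicz-H\"older inequality. For any $\omega\in C^\infty_0(\Omega,\Lambda^{k-1})$, since $A^\circ$ is smooth with compact support in $\Omega$, Stokes' theorem gives
\begin{align*}
\mathcal{S}^\circ(\omega)=\int\limits_\Omega dA^\circ\wedge d\omega=\int\limits_\Omega d(A^\circ\wedge d\omega)=\int\limits_{\partial\Omega}A^\circ\wedge d\omega=0,
\end{align*}
using $d^2\omega=0$ and the vanishing of $A^\circ$ near $\partial\Omega$. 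By continuity $\mathcal{S}^\circ\equiv 0$ on $H^{d,\Phi(\cdot)}_T(\Omega,\Lambda^{k-1})$.

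Finally, Proposition~\ref{prop:separ}(3) gives $\mathcal{S}^\circ(u^\circ)=-1\ne 0$, so $u^\circ$ lies in $\tilde W^{d,\Phi(\cdot)}_T(\Omega,\Lambda^{k-1})$ but not in $H^{d,\Phi(\cdot)}_T(\Omega,\Lambda^{k-1})$, which yields the strict inclusion. There is no serious obstacle here; the only mildly delicate point is the $\Phi$-integrability of $d\eta\wedge u$, and this is precisely where the choice $\eta\equiv 1$ in a neighbourhood of $\frS$ is essential, since it pushes the singular behaviour of $u$ entirely into the $\eta\, du$ term, which is controlled by hypothesis.
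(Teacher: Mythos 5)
Your proof is correct and follows essentially the same route as the paper: exhibit the compactly supported witness $u^\circ\in W^{d,\Phi(\cdot)}_c(\Omega,\Lambda^{k-1})\subset \tilde W^{d,\Phi(\cdot)}_T(\Omega,\Lambda^{k-1})$ and separate it from $H^{d,\Phi(\cdot)}_T(\Omega,\Lambda^{k-1})$ by a linear functional that vanishes on $C_0^\infty(\Omega,\Lambda^{k-1})$ via Stokes but equals $-1$ at $u^\circ$ by Proposition~\ref{prop:separ}. The only (immaterial) difference is that the paper uses $\mathcal{S}$ where you use $\mathcal{S}^\circ$; both annihilate $C_0^\infty(\Omega,\Lambda^{k-1})$ and both take the value $-1$ at $u^\circ$, and your extra verification that $du^\circ\in L^{\Phi(\cdot)}$ is a welcome detail the paper leaves implicit.
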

\begin{proof}
For any $\varphi \in C_0^\infty(\Omega,\Lambda^{k-1})$ by the Stokes theorem we have 
$$
\mathcal{S} (\varphi) = \int\limits_\Omega dA \wedge d\varphi = \int\limits_{\partial \Omega} A \wedge d\varphi =0.
$$
On the other hand, $u^\circ\in \tilde W^{d,\Phi(\cdot)}_T(\Omega,\Lambda^{k-1})$ and by Proposition~\ref{prop:separ} we have $\mathcal{S}(u^\circ)=-1$.
\end{proof}

\begin{theorem} [Lavrentiev gap]\label{main}
If there exists a pair $(u,A)$ of $(\Phi,k)$-separating forms then for $b=dA^\circ$ the functional  
$$
\mathcal{F}_{\Phi,b}(w) = \int\limits_\Omega \Phi(x,|d w|)\,dV + \mathcal{S}^\circ(w)=  \int\limits_\Omega \Phi(x,|d w|)\,dV +\int\limits_\Omega b \wedge dw
$$
satisfies
$$
 \inf  \mathcal{F}(W^{d,\Phi(\cdot)}_c(\Omega,\Lambda^{k-1})) < \inf  \mathcal{F}(C_0^\infty (\Omega,\Lambda^{k-1}))
$$
and as a corollary
$$
 \inf  \mathcal{F}(W^{d,\Phi(\cdot)}_T(\Omega,\Lambda^{k-1})) < \inf  \mathcal{F}(H^{d,\Phi(\cdot)}_T (\Omega,\Lambda^{k-1})).
$$

\end{theorem}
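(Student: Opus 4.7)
The plan is to construct an explicit competitor $u^\circ$ (from Definition~\ref{def:localuAb}, scaled by a small positive factor) which lies in $W^{d,\Phi(\cdot)}_c(\Omega,\Lambda^{k-1})$ and yields a strictly negative value of $\mathcal{F}_{\Phi,b}$, while every smooth compactly supported form gives a nonnegative value.

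First I would handle the smooth side. By Proposition~\ref{prop:separ}(2) the linear functional $\mathcal{S}^\circ$ vanishes on $H^{d,\Phi(\cdot)}(\Omega,\Lambda^{k-1})$, and in particular on $C_0^\infty(\Omega,\Lambda^{k-1})$. Therefore $\mathcal{F}_{\Phi,b}(w) = \int_\Omega \Phi(x,|dw|)\,dV \geq 0$ for every $w \in C_0^\infty(\Omega,\Lambda^{k-1})$, and the value $0$ is attained at $w = 0$, giving $\inf \mathcal{F}_{\Phi,b}(C_0^\infty(\Omega,\Lambda^{k-1})) = 0$. Since $dA^\circ \in L^{\Phi^*(\cdot)}$, the functional $\mathcal{S}^\circ$ is continuous on $W^{d,\Phi(\cdot)}$; hence the same conclusion $\mathcal{F}_{\Phi,b} \geq 0$, with infimum $0$ attained at the zero form, extends by density to $H^{d,\Phi(\cdot)}_T(\Omega,\Lambda^{k-1})$.

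Next I would test against $\lambda u^\circ$ for small $\lambda > 0$. Writing $u^\circ = \eta u$ and $du^\circ = d\eta \wedge u + \eta\, du$, I would observe that $d\eta$ is supported in $\Omega \setminus \frS$ where $u$ is smooth, so the first summand is a bounded compactly supported form, while the second lies in $L^{\Phi(\cdot)}$ by property~(v) of a separating pair. Hence $u^\circ \in W^{d,\Phi(\cdot)}_c(\Omega,\Lambda^{k-1})$. Combining $\mathcal{S}^\circ(u^\circ) = -1$ from Proposition~\ref{prop:separ}(3) with the $\nabla_2$ bound \eqref{eq:phi-Nabla2} applied with $\gamma = \lambda \in (0,1]$, I obtain
\begin{equation*}
\mathcal{F}_{\Phi,b}(\lambda u^\circ) \leq c\, \lambda^s \int_\Omega \Phi(x, |du^\circ|)\,dV - \lambda,
\end{equation*}
where $s > 1$ is the exponent from \eqref{eq:phi-Nabla2}. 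The right-hand side equals $\lambda \bigl(c \lambda^{s-1} K - 1\bigr)$ with $K := \int_\Omega \Phi(x,|du^\circ|)\,dV < \infty$, and is therefore strictly negative for all sufficiently small $\lambda$. This establishes $\inf \mathcal{F}_{\Phi,b}(W^{d,\Phi(\cdot)}_c(\Omega,\Lambda^{k-1})) < 0 = \inf \mathcal{F}_{\Phi,b}(C_0^\infty(\Omega,\Lambda^{k-1}))$. The corollary then follows from the inclusions $W^{d,\Phi(\cdot)}_c \subset W^{d,\Phi(\cdot)}_T$ (compactly supported forms have vanishing tangential trace) and $C_0^\infty \subset H^{d,\Phi(\cdot)}_T$, together with the first paragraph's identification $\inf \mathcal{F}_{\Phi,b}(H^{d,\Phi(\cdot)}_T) = 0$. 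I do not anticipate a serious obstacle: the only genuine bookkeeping point is the verification that $du^\circ \in L^{\Phi(\cdot)}$, which reduces to the smoothness of $u$ off $\frS$ and property~(v); everything else is a direct application of Proposition~\ref{prop:separ} and the $\nabla_2$ condition.
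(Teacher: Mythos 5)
Your proof is correct and follows essentially the same route as the paper: both establish $\mathcal{F}_{\Phi,b}\geq 0$ on the smooth class via the vanishing of $\mathcal{S}^\circ$ there, and then test with $t u^\circ$ for small $t>0$, combining $\mathcal{S}^\circ(u^\circ)=-1$ with the $\nabla_2$ bound \eqref{eq:phi-Nabla2} to get $\mathcal{F}_{\Phi,b}(tu^\circ)\leq ct^{s}-t<0$. Your extra verification that $u^\circ\in W^{d,\Phi(\cdot)}_c$ via $du^\circ=d\eta\wedge u+\eta\,du$ is a correct filling-in of a detail the paper leaves implicit.
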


\begin{proof}
By Proposition~\ref{prop:separ} and nonnegativity of $\Phi$, $\mathcal{F}_{\Phi,b}(w) \geq 0$ for all $w\in H^{d,\Phi(\cdot)}_T (\Omega,\Lambda^{k-1})$. On the other hand, for $t>0$, using Proposition~\ref{prop:separ} and \eqref{eq:phi-Nabla2}, we have
$$
\mathcal{F}_{\Phi,b}(tu^\circ) = \int\limits_\Omega \Phi(x,t|du^\circ|)\, dV -t \leq c t^{s} -t
$$
 with some $s>1$. This implies $\mathcal{F}_{\Phi,b}(tu^\circ) <0$ for sufficiently small $t$. 
\end{proof}

Now we discuss the Dirichlet problem. First, we repeat certain result from \cite{BalDieSur20}. Let 
$$
\mathcal{F} (\omega) = \int\limits_\Omega \Phi(x,|d\omega|)\, dV, \quad \mathcal{F}^*(g) := \int_\Omega \Phi^*(x , \abs{g(x)})\,dV. 
$$
Let $(u,A)$ be a  $(\Phi,k)$-separating pair. Denote $b=dA$.

\begin{assumption} \label{Ass:basic1} There exist $s,t > 0$ such that $\mathcal{F}(tu) + \mathcal{F}^*(s b) < t s$.
\end{assumption}

\begin{theorem}[H-harmonic $\neq$ W-harmonic]
  \label{thm:harmonic}
  
Under Assumption \ref{Ass:basic1}, for
  \begin{align*}
    w_t &= \argmin
          \mathcal{F}\big( tu^\partial+ W_0^{1,\Phi(\cdot)}(\Omega,\Lambda^{k-1})\big) \qquad
    \\
    h_t &= \argmin \mathcal{F}\big( tu^\partial+ C_0^{\infty}(\Omega,\Lambda^{k-1})\big)
  \end{align*}
  we have $w_t \neq h_t$ and $\mathcal{F}(w_t) < \mathcal{F}(h_t)$.
\end{theorem}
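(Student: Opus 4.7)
The strategy is to play the separating functional $\mathcal S$ off Fenchel--Young: the linear form $\mathcal S$ is constant on the whole affine class $tu^\partial + H^{d,\Phi(\cdot)}_T$, so Young's inequality forces a universal lower bound on $\mathcal F$ over that class, while the $W$-class admits the competitor $tu$ whose energy is smaller than this bound by Assumption~\ref{Ass:basic1}.

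First I would record the value of $\mathcal S$ on each class. For any test form $\varphi\in C_0^\infty(\Omega,\Lambda^{k-1})$ the Stokes theorem gives $\mathcal S(\varphi)=\int_{\partial\Omega}A\wedge d\varphi=0$, and this extends by density (and continuity of $\mathcal S$, which follows from $dA\in L^{\Phi^*(\cdot)}$) to all of $H^{d,\Phi(\cdot)}_T(\Omega,\Lambda^{k-1})$. On the other hand Proposition~\ref{prop:separ} yields $\mathcal S^\partial(u)=1$, and because $u^\partial$ is smooth up to the boundary we also get $\mathcal S(u^\partial)=\mathcal S^\partial(u^\partial)+\mathcal S^\circ(u^\partial)=1+0=1$. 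Hence for every $h\in tu^\partial+H^{d,\Phi(\cdot)}_T(\Omega,\Lambda^{k-1})$,
\begin{equation*}
  \mathcal S(h)=t\,\mathcal S(u^\partial)=t.
\end{equation*}

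Next, with $b=dA$, I apply the pointwise bound $|dA\wedge dh|\le |dA|\,|dh|\,dV$ (the wedge of an $(N-k)$-form and a $k$-form) and Young's inequality $s|b||dh|\le \Phi(x,|dh|)+\Phi^*(x,s|b|)$:
\begin{equation*}
  ts \;=\; s\,\mathcal S(h)\;=\;\int_\Omega sb\wedge dh\;\le\;\int_\Omega \bigl(\Phi(x,|dh|)+\Phi^*(x,s|b|)\bigr)\,dV \;=\;\mathcal F(h)+\mathcal F^*(sb).
\end{equation*}
Taking $h=h_t$, this produces the lower bound $\mathcal F(h_t)\ge ts-\mathcal F^*(sb)$.

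For the upper bound on $\mathcal F(w_t)$ I use the competitor $tu=tu^\partial+tu^\circ$. By Corollary~\ref{corr:nondense2} (or directly from the definition of $u^\circ=\eta u$ with $\eta\in C_0^\infty(\Omega)$) the form $u^\circ$ lies in $\tilde W^{d,\Phi(\cdot)}_T(\Omega,\Lambda^{k-1})$, i.e.\ $tu$ is admissible for the $W$-minimisation, so $\mathcal F(w_t)\le \mathcal F(tu)$. Combining with Assumption~\ref{Ass:basic1},
\begin{equation*}
  \mathcal F(w_t)\;\le\;\mathcal F(tu)\;<\;ts-\mathcal F^*(sb)\;\le\;\mathcal F(h_t),
\end{equation*}
which gives both $\mathcal F(w_t)<\mathcal F(h_t)$ and $w_t\ne h_t$.

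The only subtle point is the first step: showing that $\mathcal S$ is well defined and linear on the whole admissible class and that it annihilates the $H$-class. The vanishing on $C_0^\infty$ is immediate by Stokes, but promoting it by density to $H^{d,\Phi(\cdot)}_T$ uses the continuity of $\mathcal S$ with respect to the $L^{\Phi(\cdot)}$-norm of $d(\cdot)$, which rests on $dA\in L^{\Phi^*(\cdot)}$ guaranteed by the separating-pair definition. Once this is in place the argument is essentially a one-line duality computation.
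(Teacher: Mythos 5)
Your proof is correct and follows essentially the same route as the paper: the competitor $tu$ bounds $\mathcal F(w_t)$ from above, the identity $\mathcal S(h)=t$ on the affine class $tu^\partial+H^{d,\Phi(\cdot)}_T$ combined with Fenchel--Young bounds $\mathcal F(h_t)$ from below by $ts-\mathcal F^*(sb)$, and Assumption~\ref{Ass:basic1} separates the two. The only cosmetic difference is that you justify $\mathcal S=0$ on the $H$-class by Stokes on $C_0^\infty$ plus density, whereas the paper cites Proposition~\ref{prop:separ} directly; the substance is identical.
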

\begin{proof}
Set $b=dA$.  We have  $t u = tu^\partial + tu^\circ \in tu^\partial+ W_0^{1,\Phi(\cdot)}(\Omega,\Lambda^{k-1})$. Thus,
  \begin{align}
    \label{eq:Fwt}
    \mathcal{F}(w_t) &\leq \mathcal{F}(t u).
  \end{align}
  By the properties of the Hodge dual and the Young inequality, 
$$
   s b\wedge dh_t = s(* b, dh_t) \leq s|*b|\cdot |dh_t| = s|b| \cdot |dh_t| \leq \Phi(x,\abs{d h_t})+\Phi^*(x,s \abs{b}). 
$$
  
 Hence
$$    
\mathcal{F}(h_t)= \int_\Omega \Phi(x,\abs{d h_t})\,dx \geq s\int_\Omega b \wedge d h_t  - \int_\Omega\Phi^*(x,s \abs{b})\,dx= s\,\mathcal{S}(h_t) - \mathcal{F}^*(s b).
$$
See \cite{IwaKauKra04} for estimates of exterior product submultiplication constant.  
  
  Since $h_t - tu^\partial \in H_0^{d,\Phi(\cdot)}(\Omega)$, we have
  $\mathcal{S}(h_t - tu^\partial)=0$ by
  Proposition~\ref{prop:separ}. This and
  $\mathcal{S}(u^\partial)=1$ by the same Proposition imply
  \begin{align}
    \label{eq:Fht}
    \mathcal{F}(h_t)
    &= s\,\mathcal{S}(t u^\partial) - \mathcal{F}^*(s b)
    = t s - \mathcal{F}^*(s b).
  \end{align}
  Combining~\eqref{eq:Fwt} and~\eqref{eq:Fht} we get
  \begin{align*}
    \mathcal{F}(h_t) - \mathcal{F}(w_t) \geq t s -
    \mathcal{F}(tu) - \mathcal{F}^*(s b)
  \end{align*}
  for all $t,s > 0$. By Assumption~\ref{Ass:basic1} the right hand-side of last inequality is positive, and thus $\mathcal{F}(h_t) > \mathcal{F}(w_t)$. This proves the claim.
\end{proof}




\subsection{Basic forms}\label{sec:Basic}

In this section we introduce differential forms which will be building blocks of our examples.  We do necessary calculations in the cubic setting, where the boundary orientation is straightforward.

Let $k\in \{1,\ldots,N-1\}$.  Define two groups of variables $\bar x = (x_1,\ldots,x_k)$ and $\hat x = (x_{k+1},\ldots, x_N)$. Let $\Gamma_l(x)$, $x\in \mathbb{R}^l$, denote the fundamental solution of the Laplace equation in $\mathbb{R}^l$ with pole at the origin:
$$
\Gamma_l(x)= \begin{cases}
\frac{1}{2} |x|, \quad &l=1,\\
-\frac{1}{2\pi} \ln \frac{1}{|x|}, \quad &l=2,\\
-\frac{1}{(l-2)\sigma_l} |x|^{2-l}, \quad &l> 2.
\end{cases}
$$
Here and below $\sigma_l$ denotes the surface area ($(l-1)$-volume) of the unit sphere in $\mathbb{R}^l$, and $|x|$ denotes the standard Euclidian norm of $x$.

Let $\theta:\mathbb{R}\to \mathbb{R}$ be a smooth increasing function such that 
$$
\theta(t) =1\quad \text{for}\quad t \geq \frac{1}{2},\quad \theta(t)=0 \quad \text{for}\quad t\leq \frac{1}{4},\quad |\theta'| \leq 4.
$$ 
Let $\eta:\mathbb{R}\to \mathbb{R}$ be a smooth increasing function such that 
$$
\eta(t)=t \quad \text{for}\quad t\leq \frac{1}{4},\quad \eta(t) = \frac{1}{2} \quad \text{for}\quad t\geq \frac{3}{4},\quad \eta''(t)\leq 0.
$$

Our basic forms are 
\begin{gather}\label{base_u}
u = \theta \left(\sqrt{N}\frac{|\hat x|}{ \eta(|\bar x|)} \right) \ast_{\hat x} d \Gamma_{N-k} (\hat x) , \\
A =  \theta \left(\sqrt{N}\frac{|\bar x|}{\eta(|\hat x|)} \right) \ast_{\bar x} d \Gamma_{k} (\bar x), \label{base_A}
\end{gather}
Here $\ast_{\hat x}$ and $\ast_{\bar x}$ are applied only within respective variables, that is
\begin{gather*}
\ast_{\hat x} d \Gamma_{N-k} (\hat x) =\frac{1}{\sigma_{N-k}} \sum_{j=k+1}^N (-1)^{j-k-1} \frac{x_j} {|\hat x|^{N-k}} dx_{k+1} \wedge \ldots \wedge\widehat{d x_j} \wedge\ldots \wedge d x_N,\\
\ast_{\bar x} d \Gamma_{k} (\bar x) =\frac{1}{\sigma_k} \sum_{j=1}^k  (-1)^{j-1} \frac{x_j} {|\bar x|^{k}}dx_{1} \wedge \ldots \wedge\widehat{d x_j} \wedge\ldots \wedge d x_k.
\end{gather*}

Further for $(N-k-1)$-form $u$ from \eqref{base_u} and $(k-1)$-form $A$ from \eqref{base_A}we use the notation $u= \mathcal{P}_1(k,N-k,0,0)$ and $A=\mathcal{P}_2(k,N-k,0,0)$. Also, in this case we denote $\frC=\{0\}^k\subset \mathbb{R}^k$, $\frS =\{0\}^k \times \{0\}^{N-k}\subset \mathbb{R}^N$, and this pair of forms is denoted by $u_\frS$, $A_{\frS}$. 



The following facts are straightforward.

\begin{proposition}\label{prop:Gamma}
Both $\ast_{\hat x} d \Gamma_{N-k} (\hat x) $ and $\ast_{\bar x} d \Gamma_{k} (\bar x)$ are harmonic
\begin{gather*}
d (\ast_{\hat x} d \Gamma_{N-k} (\hat x) ) =0, \quad \delta (\ast_{\hat x} d \Gamma_{N-k} (\hat x))  =0,\\
 d( \ast_{\bar x} d \Gamma_{k} (\bar x)) =0, \quad \delta (\ast_{\bar x} d \Gamma_{k} (\bar x)) =0
\end{gather*}
outside $\hat x=0$ and $ \bar x=0$ correspondingly. For cubes $(-\varepsilon, \varepsilon)^{N-k}\subset \mathbb{R}^{N-k}$ and  $(-\varepsilon, \varepsilon)^{k}\subset \mathbb{R}^{k}$, $\varepsilon>0$, there holds
$$
\int\limits_{\partial(-\varepsilon, \varepsilon)^{N-k}} \ast_{\hat x} d \Gamma_{N-k} (\hat x) = 1, \quad \int\limits_{\partial(-\varepsilon, \varepsilon)^{k}} \ast_{\bar x} d \Gamma_{k} (\bar x)= 1,
$$
where the natural induced orientations of the boundary are assumed.

\end{proposition}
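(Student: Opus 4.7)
The statement splits naturally into (i) harmonicity outside the singular point and (ii) the value $1$ of the boundary flux. Since the two forms are symmetric under swapping $(\bar x,k)\leftrightarrow(\hat x,N-k)$, I would prove everything for $\beta := \ast_{\hat x} d\Gamma_{N-k}(\hat x)$ and transfer the argument to the other case.

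For (i) the key observation is that $\beta$ is built only from the differentials $dx^{k+1},\ldots,dx^N$ and its coefficients depend only on $\hat x$. Using the coordinate formulas \eqref{diff_comp} for $d$ and $\delta$ on $\mathbb{R}^N$, every derivative $\partial_{x_j}$ with $j\le k$ produces zero, and no component of $\beta$ carries an index in $\{1,\ldots,k\}$; hence $d\beta$ and $\delta\beta$ on $\mathbb{R}^N$ coincide with the intrinsic $d$ and $\delta$ on $\mathbb{R}^{N-k}$ applied to $\beta$ viewed as a form there. Now in $\mathbb{R}^{N-k}$ one has $\beta = \ast(d\Gamma_{N-k})$, so a direct expansion gives $d\beta = \Delta\Gamma_{N-k}\, dx^{k+1}\wedge\cdots\wedge dx^N$, which vanishes off the origin because $\Gamma_{N-k}$ is the fundamental solution of $-\Delta$; and combining $\ast\ast = \pm\mathrm{Id}$ with $\delta = \pm\ast d\ast$ yields $\delta\beta = \pm\ast d(d\Gamma_{N-k}) = 0$ by $d^2=0$.

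For (ii), choose a ball $B_r = \{|\hat x|<r\}$ with $r<\varepsilon$ contained in $C_\varepsilon := (-\varepsilon,\varepsilon)^{N-k}$. Since $d\beta=0$ on $C_\varepsilon\setminus B_r$, the Stokes theorem in $\mathbb{R}^{N-k}$ gives
\[
\int\limits_{\partial C_\varepsilon}\beta \;=\; \int\limits_{\partial B_r}\beta
\]
with the natural outward orientations. On $\partial B_r$ the form $\beta$ is the classical flux form of the radial vector field $\nabla\Gamma_{N-k}(\hat x) = \sigma_{N-k}^{-1}\,\hat x/|\hat x|^{N-k}$, so the integral reduces to $\int_{\partial B_r}\nabla\Gamma_{N-k}\cdot\nu\, d\sigma = \sigma_{N-k}^{-1}\,r^{1-(N-k)}\cdot r^{N-k-1}\sigma_{N-k} = 1$; this is verified in the three cases $l=1,\,l=2,\,l\ge 3$ from the explicit formula for $\Gamma_l$, the normalization having been chosen precisely for this.

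The main technical nuisance that I anticipate is bookkeeping of signs and orientations: matching the intrinsic $\mathbb{R}^{N-k}$-operators to the ambient $\mathbb{R}^N$-operators on $\beta$, and matching the induced orientation of $\partial C_\varepsilon$ to that of $\partial B_r$ inside Stokes. Once the convention $d\sigma = \iota_\nu\, dV$ fixed in Section~\ref{sec:diffforms} is used consistently, all signs align and the computation gives exactly $1$.
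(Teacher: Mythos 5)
Your proof is correct and supplies exactly the standard argument that the paper omits (the authors simply declare these facts ``straightforward'' and give no proof): reduction of the ambient $d$ and $\delta$ to the intrinsic operators on $\mathbb{R}^{N-k}$ (legitimate since the coefficients depend only on $\hat x$ and no $dx^j$ with $j\le k$ appears), harmonicity via $\Delta\Gamma_l=0$ off the origin and $d^2=0$, and the flux computation via Stokes between the cube and a small sphere using $\nabla\Gamma_l=\sigma_l^{-1}x/|x|^l$, which indeed gives $1$ in all three cases $l=1,2,\ge 3$ of the paper's normalization. No gaps.
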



\begin{proposition}
For the forms $u$ and $A$ given by \eqref{base_u} and \eqref{base_A} 
\begin{enumerate}
\item  There holds
$$
\{u\neq 0\} \subset \{|\hat x| > \eta(|\bar x|)/(4\sqrt{N})  \}, \quad \{A\neq 0\} \subset \{|\bar x| > \eta(|\hat x|)/(4\sqrt{N})  \}.
$$

\item The forms $u$ and $A$ are smooth outside the origin, 
\begin{equation}\label{eq:prop1}
\begin{gathered}
|\nabla u|\lesssim |\hat x|^{k-N},\quad |\nabla A|\lesssim |\bar x|^{-k},\\
\{|\nabla u|\neq 0\} \subset \{|\hat x| > \eta(|\bar x|)/(4\sqrt{N})  \},\\
\{|\nabla A|\neq 0\} \subset \{|\bar x| > \eta(|\hat x|)/(4\sqrt{N})  \}.
\end{gathered}
\end{equation}

For any bounded domain $\Omega\subset \mathbb{R}^N$ there holds $u\in W^{1,1}(\Omega, \Lambda^{N-k-1})$, $A \in W^{1,1}(\Omega, \Lambda^{k-1})$, and 
$$
d u = d \theta \left(\sqrt{N}\frac{|\hat x|}{ \eta(|\bar x|)} \right) \wedge \ast_{\hat x} d \Gamma_{N-k} (\hat x), \quad dA = d \theta \left(\sqrt{N}\frac{|\bar x|}{ \eta(|\hat x|)} \right) \wedge \ast_{\bar x} d \Gamma_{k} (\bar x).
$$
\begin{equation}\label{eq:prop1a}
\begin{gathered}
|d u|\lesssim |\hat x|^{k-N},\quad |d A|\lesssim |\bar x|^{-k},\\
\{|d u|\neq 0\} \subset \{\eta(|\bar x|)/(2\sqrt{N})  >|\hat x| > \eta(|\bar x|)/(4\sqrt{N})  \},\\
\{|d A|\neq 0\} \subset \{\eta(|\hat x|)/(2\sqrt{N})>|\bar x| > \eta(|\hat x|)/(4\sqrt{N})  \}.
\end{gathered}
\end{equation}


\item There holds $|du| \cdot |dA| =0$ in $\mathbb{R}^N\setminus \{0\}$.

\item For a nonnegative function $F=F(\cdot,\cdot)$ with nonnegative arguments, satisfying $\triangle_2$--condition in the second argument and $F(\cdot,0)=0$,
\begin{equation}\label{du_est01}
\int\limits_{[-1,1]^N} F(|\hat x|, |d u|)\, dV \lesssim \int\limits_0^{\sqrt{N}} F\left(t, t^{k-N}\right) t^{N-1} \, dt.
\end{equation}
\end{enumerate}

\item For a nonnegative function $G=G(\cdot,\cdot)$ with nonnegative arguments, satisfying $\triangle_2$--condition in the second variable and $G(\cdot,0)=0$ 
\begin{equation}\label{b_est01}
\int\limits_{[-1,1]^N} G(|\hat x|, |dA|)\, dV \lesssim \int\limits_0^{\sqrt{N}} G\bigl(t, t^{-k} \bigr)\,  t^{N-1}  \, dt.
\end{equation}

\end{proposition}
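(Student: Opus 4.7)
The plan is to verify the five claims sequentially, relying on three basic observations about the construction. First, the scalar cutoff $\theta(\sqrt{N}|\hat x|/\eta(|\bar x|))$ is smooth, equals $1$ when $\sqrt N |\hat x|\ge \eta(|\bar x|)/2$, vanishes when $\sqrt N |\hat x|\le \eta(|\bar x|)/4$, and $d\theta$ is supported in the annular region $\eta(|\bar x|)/(4\sqrt N)\le |\hat x|\le \eta(|\bar x|)/(2\sqrt N)$, on which in particular $|\hat x|\sim \eta(|\bar x|)$. Second, by Proposition~\ref{prop:Gamma}, $\ast_{\hat x} d\Gamma_{N-k}(\hat x)$ is closed and coclosed away from $\hat x=0$, with pointwise bounds $|\ast_{\hat x} d\Gamma_{N-k}(\hat x)|\lesssim |\hat x|^{k+1-N}$ and $|\nabla \ast_{\hat x} d\Gamma_{N-k}(\hat x)|\lesssim |\hat x|^{k-N}$. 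Third, symmetric statements hold for $A$ with the roles of $\bar x$ and $\hat x$ interchanged.

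Claim (1) is then immediate from the support of $\theta$. For (2), smoothness of $u$ off the origin follows because $\theta$ vanishes in a neighborhood of $\{\hat x=0,\bar x\neq 0\}$, masking the singularity of $\ast_{\hat x} d\Gamma_{N-k}(\hat x)$ there; a symmetric argument handles $A$. The gradient bound follows from the Leibniz rule: where $\theta=1$ it reduces to the estimate on $\ast_{\hat x} d\Gamma_{N-k}(\hat x)$, and on $\mathrm{supp}\, d\theta$ one has $|\nabla \theta(\sqrt N|\hat x|/\eta(|\bar x|))|\lesssim 1/\eta(|\bar x|)\sim 1/|\hat x|$, which combined with the pointwise value estimate yields $|\hat x|^{k-N}$. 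The identity $du = d\theta\wedge \ast_{\hat x} d\Gamma_{N-k}(\hat x)$ drops out of the Leibniz rule using closedness of $\ast_{\hat x} d\Gamma_{N-k}(\hat x)$ away from $\hat x = 0$, and the stated support containment for $du$ is precisely $\mathrm{supp}\, d\theta$ intersected with the support of $u$. Membership in $W^{1,1}(\Omega)$ is checked by passing to polar coordinates in $\hat x$: on bounded $\Omega$,
\begin{align*}
\int_\Omega |\nabla u|\, dV \lesssim \int_{K} d\bar x \int_{\eta(|\bar x|)}^{R} s^{-1}\, ds \lesssim \int_K |\log \eta(|\bar x|)|\, d\bar x,
\end{align*}
which is finite since $\eta(r)\sim r$ for $r\le 1/4$ and $\log$ is locally integrable on $\mathbb{R}^k$; the estimate for $A$ is symmetric.

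For (3), suppose $|du|\cdot|dA|\neq 0$ at some point. Then the support inclusions in \eqref{eq:prop1a} force $|\hat x|<\eta(|\bar x|)/(2\sqrt N)$ and $|\bar x|<\eta(|\hat x|)/(2\sqrt N)$. Using $\eta(t)\le t$ these combine to $|\hat x|<|\bar x|/(2\sqrt N)<|\hat x|/(4N)$, forcing $\hat x=\bar x=0$, which is excluded. For (4) and (5), split $dV=d\bar x\, d\hat x$ and pass to polar coordinates. On $\mathrm{supp}\, du$ the shell $|\hat x|\sim \eta(|\bar x|)$ has thickness comparable to $\eta(|\bar x|)$, so the inner integration in $\hat x$ produces a factor of order $\eta(|\bar x|)^{N-k}\, F(\eta(|\bar x|),\eta(|\bar x|)^{k-N})$. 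Integrating in $\bar x$ in polar coordinates with radius $r=|\bar x|$, and using $\eta(r)\sim r$ for $r\le 1/4$ (with the bounded regime $r\ge 1/4$ absorbed via the $\triangle_2$-condition on $F$), gives the claimed bound $\int_0^{\sqrt N} F(t,t^{k-N}) t^{N-1}\, dt$. The estimate for $\int G(|\hat x|,|dA|)\, dV$ is obtained by the same computation with the roles of $\bar x$ and $\hat x$ interchanged, after noting that on $\mathrm{supp}\, dA$ one has $|\bar x|\sim \eta(|\hat x|)\sim |\hat x|$ near the origin, so $|dA|\lesssim |\bar x|^{-k}\sim |\hat x|^{-k}$.

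The only step requiring genuine care is the polar-coordinate reduction in (4)--(5): one must track the constraint $|\hat x|\sim \eta(|\bar x|)$ accurately, convert the shell thickness into the correct power of $|\bar x|$, and use $\triangle_2$ to absorb the transition between the small-$r$ and bounded-$r$ regimes. Everything else follows directly from the explicit definitions of $\theta$, $\eta$, and $\Gamma_l$.
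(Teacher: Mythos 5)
Your argument follows the same route as the paper's: support analysis of the cutoff $\theta$, the pointwise bounds on $\ast_{\hat x} d\Gamma_{N-k}(\hat x)$ and $\ast_{\bar x} d\Gamma_k(\bar x)$, the Leibniz rule plus closedness of these kernels to obtain $du=d\theta\wedge \ast_{\hat x} d\Gamma_{N-k}(\hat x)$, and polar coordinates for the integral estimates. Claims (1)--(3) are fine, and your $W^{1,1}$ computation (integrating in $\hat x$ first and landing on $\int_K|\log\eta(|\bar x|)|\,d\bar x$) is a cosmetic variant of the paper's $\int_0^R t^{k-1}\,dt$, obtained by integrating in $\bar x$ first; you should also record that $|u|\lesssim|\hat x|^{k+1-N}\in L^1$ and that the coefficients are absolutely continuous on lines (smooth off a null set with integrable gradient) before concluding Sobolev membership, but this is routine.

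The one step that does not hold up as written is in (4). You integrate first over the shell $\{\eta(|\bar x|)/(4\sqrt{N})<|\hat x|<\eta(|\bar x|)/(2\sqrt{N})\}$ and assert that the inner integral is of order $\eta(|\bar x|)^{N-k}\,F\bigl(\eta(|\bar x|),\eta(|\bar x|)^{k-N}\bigr)$. This silently replaces the first argument $|\hat x|$ of $F$ by the merely comparable quantity $\eta(|\bar x|)$. But $F$ is only assumed to satisfy $\triangle_2$ (and, implicitly, monotonicity) in its \emph{second} argument; no regularity whatsoever is assumed in the first, so $F(s,\tau)$ and $F(s',\tau)$ need not be comparable even when $s\sim s'$ (take $F(s,\tau)=w(s)\tau$ with $w$ wildly oscillating). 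The repair is to reverse the order of integration, which is what the paper does: for fixed $|\hat x|=t$ the admissible set of $\bar x$, namely $\{\eta(|\bar x|)<4\sqrt{N}\,t\}$, has $k$-dimensional measure $\lesssim t^k$, whence
\begin{equation*}
\int\limits_{[-1,1]^N} F(|\hat x|,|du|)\,dV\ \lesssim\ \int\limits_0^{\sqrt{N}} F\bigl(t,t^{k-N}\bigr)\,t^{k}\,t^{N-k-1}\,dt,
\end{equation*}
with the first argument of $F$ never touched. (Equivalently, keep the inner integral in the variable $s=|\hat x|$ and apply Tonelli at the end.) Note that (5) does not suffer from this defect even in your ordering: there the thin shell is in $\bar x$ while the first argument of $G$ is $|\hat x|$, which stays fixed during the inner integration.
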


\begin{proof}

The first two statements follow  from the definition of $u$ and $A$. Assume that $\Omega \subset \{|x|<R\}$. Using polar coordinates and estimates \eqref{eq:prop1}, we evaluate 
\begin{align*}
\int\limits_\Omega |\nabla u| \, dV &\lesssim \int\limits_0^R t^{k-N} t^{N-k-1} t^k\, dt = \int\limits_0^R t^{k-1}\, dt <\infty , \\
\int\limits_\Omega |\nabla A|\, dV &\lesssim \int\limits_0^R t^{-k} t^{k-1} t^{N-k}\, dt = \int\limits_0^R t^{N-k-1}\, dt <\infty.
\end{align*}
Thus the coefficients of the forms $u$ and $A$ belong to the Sobolev space $W^{1,1}(\Omega)$. Since the coefficients of the exterior derivative are linear combinations of derivatives of form coefficients, this implies $u\in W^{d,1}(\Omega, \Lambda^{N-k-1})$, $A\in  W^{d,1}(\Omega, \Lambda^{k-1})$ and their exterior derivatives are as above together with estimates \eqref{eq:prop1a}.

To prove that $|du| \cdot |dA| =0$ in $\mathbb{R}^N\setminus \{0\}$, we note that $|dA|\neq 0$ implies $|\bar x| < |\hat x|/(2\sqrt{N})$ and $du \neq 0$ implies $|\hat x|< |\bar x|/(2\sqrt{N})$ (recall that $\eta(t)\leq t$).

The last two statements immediately follows from the above estimates for $|\nabla u|$ and $|\nabla A|$ and using polar coordinates.
\end{proof}

Let $Q=[-1,1]^d$. For $x=(x_1,\ldots,x_l)\in \mathbb{R}^l$ the norm $|x|_{\infty} = \max\{|x_1|,\ldots,|x|_l\}$, while the standard Euclidian norm is denoted by $|x|=\sqrt{x_1^2+\ldots+ x_l^2}$. Recall that for $x\in \mathbb{R}^l$ there holds $|x|_\infty \leq |x|\leq \sqrt{l} |x|_\infty $.

\begin{proposition}\label{prop:boundary0}
For the form $A$ given by \eqref{base_A} on $\partial Q\cap \{|\hat x|_\infty<1\}$ there holds $dA=0$. Thus 
\begin{equation}\label{eq:inter1}
\begin{gathered}
\{dA\neq 0\}\cap \partial Q \subset \{|\hat x|_\infty=1\},\quad u=\ast_{\hat x} d \Gamma_{N-k} (\hat x) \quad \text{on}\quad \{dA\neq 0\} \cap \partial Q,\\
 dA = d\theta (2\sqrt{N}|\bar x|) \wedge \ast_{\bar x} d \Gamma_k(\bar x)  \quad \text{on}\quad \{dA\neq 0\} \cap \partial Q.
\end{gathered}
\end{equation}
\end{proposition}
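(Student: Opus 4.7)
The proof is a direct bookkeeping of the supports of the cutoff factors, so the main task is to track where $\theta$ and $d\theta$ are active relative to the boundary of the cube. Let me sketch the plan.

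\textbf{Step 1: Locate the support of $dA$.} From the formula for $dA$ established earlier, $dA$ is supported in the region where $d\theta(\sqrt{N}|\bar x|/\eta(|\hat x|))$ is nonzero, which by the definition of $\theta$ forces $\tfrac14 \le \sqrt{N}|\bar x|/\eta(|\hat x|) \le \tfrac12$, hence $|\bar x| \le \eta(|\hat x|)/(2\sqrt{N})$. Using $\eta(t)\le \tfrac12$ for all $t$, this gives the uniform bound $|\bar x|\le 1/(4\sqrt{N})<1$.

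\textbf{Step 2: Rule out the faces with $|\hat x|_\infty<1$.} On $\partial Q\cap\{|\hat x|_\infty<1\}$ one necessarily has $|\bar x|_\infty=1$, hence $|\bar x|\ge 1$, which contradicts Step 1. Therefore $dA=0$ there, and equivalently $\{dA\ne 0\}\cap \partial Q\subset\{|\hat x|_\infty=1\}$.

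\textbf{Step 3: Simplify $u$ on $\{dA\ne 0\}\cap \partial Q$.} On this set $|\hat x|_\infty=1$, so $|\hat x|\ge 1$. Combined with $\eta(|\bar x|)\le \tfrac12$, this yields $\sqrt{N}|\hat x|/\eta(|\bar x|)\ge 2\sqrt{N}\ge \tfrac12$, so $\theta(\sqrt{N}|\hat x|/\eta(|\bar x|))=1$ and $u$ reduces to $\ast_{\hat x}d\Gamma_{N-k}(\hat x)$.

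\textbf{Step 4: Simplify $dA$ on $\{dA\ne 0\}\cap \partial Q$.} Still using $|\hat x|\ge 1\ge \tfrac34$, the definition of $\eta$ gives $\eta(|\hat x|)=\tfrac12$, so $\sqrt{N}|\bar x|/\eta(|\hat x|)=2\sqrt{N}|\bar x|$ and the formula $dA=d\theta(2\sqrt{N}|\bar x|)\wedge \ast_{\bar x}d\Gamma_k(\bar x)$ follows from the general expression for $dA$.

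There is no genuine obstacle here; the only thing to be careful about is matching the plateau $\eta\equiv \tfrac12$ for large arguments with the inequality $|\hat x|\ge 1$ on the relevant faces, so that both cutoff factors become constant and the formulas collapse to the claimed ones.
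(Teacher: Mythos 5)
Your proof is correct and takes essentially the same support-bookkeeping route as the paper: you bound the support of $dA$ using $\eta\le 1/2$ to get the uniform bound $|\bar x|\le 1/(4\sqrt N)<1$, whereas the paper uses $\eta(t)\le t$ to get $|\hat x|_\infty>2|\bar x|_\infty$, but both exclude the faces with $|\bar x|_\infty=1$. The remaining steps — that $\theta(\sqrt N|\hat x|/\eta(|\bar x|))=1$ and that $\eta(|\hat x|)\equiv 1/2$ on a neighbourhood of the faces $\{|\hat x|_\infty=1\}$ (so $d\eta(|\hat x|)=0$ and the formula for $dA$ collapses) — coincide with the paper's argument.
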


\begin{proof}
Note that
$$
\{dA\neq 0\}\subset \{|\bar x| < \eta(|\hat x|)/(2\sqrt{N})\}\subset \{|\bar x| < |\hat x|/(2\sqrt{N})\} \subset \{|\hat x|_\infty>2|\bar x|_\infty\}.
$$
Then for $x\in \{dA\neq 0\} \cap \{|\bar x|_\infty=1\}$ there holds $|\hat x|_\infty>2$, which implies the first claim. Thus, 
\begin{equation*}
 \{dA\neq 0\} \cap \partial Q \subset \{|\hat x|_\infty =1\}\cap \{1/(8\sqrt{N})\leq |\bar x| \leq 1/(4\sqrt{N})\}. 
\end{equation*}
On the set $ \{dA\neq 0\} \cap \partial Q$ we have $|\hat x|_\infty=1$, $|\hat x| \geq 1$ and $|\bar x| \leq 1/4$, $\eta(|\bar x|) \leq 1/4$, so $\theta (\sqrt{N} |\hat x|/ \eta(|\bar x|)) =1$, $\eta (|\hat x|)=1$, $d\eta(|\hat x|) =0$, and we get \eqref{eq:inter1} by definitions \eqref{base_u} and \eqref{base_A} of $u$ and $A$.
\end{proof}

The following statement is central in our considerations. Let $\partial [-1,1]^N$ be the boundary of the cube $Q=[-1,1]^N$ with the natural induced orientation. 

\begin{lemma}\label{L:basic}
For the forms $u$ and $A$ given by \eqref{base_u} and \eqref{base_A} there holds
\begin{equation}\label{basic_integrals}
\int\limits_{\partial [-1,1]^N} u \wedge dA= (-1)^{k(N-k)}, \quad \int\limits_{\partial [-1,1]^N} A \wedge du =1.
\end{equation}
\end{lemma}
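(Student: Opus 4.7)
Both integrals reduce to products of one-dimensional integrals via Proposition~\ref{prop:boundary0} (and its $\bar x\leftrightarrow\hat x$ symmetric counterpart, obtained by the same argument), Fubini, Stokes, and Proposition~\ref{prop:Gamma}. On $\{|\bar x|_\infty=1\}\cap\partial Q$ one has $A=\ast_{\bar x}d\Gamma_k(\bar x)$: the $\theta$-factor in $A$ equals $1$ because $\eta(|\hat x|)\le 1/2$ while $|\bar x|\ge 1$ forces the argument of $\theta$ above $1/2$. Moreover, in a neighborhood of this set $\eta(|\bar x|)=1/2$ is constant, so $u=\tilde u(\hat x):=\theta(2\sqrt N|\hat x|)\ast_{\hat x}d\Gamma_{N-k}(\hat x)$ depends only on $\hat x$, and $du=d_{\hat x}\tilde u$ has only $d\hat x$-components. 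Symmetrically on $\{|\hat x|_\infty=1\}\cap\partial Q$ one has $u=\ast_{\hat x}d\Gamma_{N-k}(\hat x)$ and $dA=d_{\bar x}\tilde A$ with $\tilde A(\bar x):=\theta(2\sqrt N|\bar x|)\ast_{\bar x}d\Gamma_k(\bar x)$, depending only on $\bar x$.

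For the second identity of \eqref{basic_integrals}, the support of $du$ on $\partial Q$ lies in $\{|\bar x|_\infty=1\}\cap\partial Q=\partial[-1,1]^k\times[-1,1]^{N-k}$, and its $\partial Q$-induced orientation coincides with the product orientation, since on each face $\{x_i=\pm 1\}$ with $1\le i\le k$ the omitted coordinate is inside the $\bar x$-block. Since $A$ has only $d\bar x$-components and $du$ only $d\hat x$-components, Fubini gives
\[
\int_{\partial Q}A\wedge du=\Bigl(\int_{\partial[-1,1]^k}\ast_{\bar x}d\Gamma_k(\bar x)\Bigr)\Bigl(\int_{[-1,1]^{N-k}}d\tilde u\Bigr).
\]
The first factor equals $1$ by Proposition~\ref{prop:Gamma}. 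For the second, note that $\tilde u$ extends smoothly across $\hat x=0$ (the $\theta$-factor vanishes there), so Stokes on $[-1,1]^{N-k}$ and $\tilde u=\ast_{\hat x}d\Gamma_{N-k}(\hat x)$ on $\partial[-1,1]^{N-k}$ (where $\theta=1$) together with Proposition~\ref{prop:Gamma} yield $1$ as well. Hence $\int_{\partial Q}A\wedge du=1$.

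For the first identity, the computation is parallel on $\{|\hat x|_\infty=1\}\cap\partial Q=[-1,1]^k\times\partial[-1,1]^{N-k}$, but two sign contributions appear. (i) On each face $\{x_j=\pm 1\}$ with $k+1\le j\le N$, the $\partial Q$-induced orientation differs from the product orientation by $(-1)^k$, since $k$ transpositions are needed to move $dx^1\wedge\cdots\wedge dx^k$ past the omitted $\hat x$-coordinate (direct check: $(-1)^{j-1}/(-1)^{j-k-1}=(-1)^k$). (ii) To apply Fubini with the $\bar x$-factor first, one reorders $u\wedge dA=(-1)^{(N-k-1)k}dA\wedge u$. The combined sign is $(-1)^k\cdot(-1)^{(N-k-1)k}=(-1)^{k(N-k)}$, and the remaining product $\int_{[-1,1]^k}d\tilde A\cdot\int_{\partial[-1,1]^{N-k}}\ast_{\hat x}d\Gamma_{N-k}(\hat x)=1\cdot 1=1$ by the same applications of Stokes and Proposition~\ref{prop:Gamma}, giving $(-1)^{k(N-k)}$ as claimed. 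The main difficulty is precisely this orientation bookkeeping; the analytic content is otherwise immediate.
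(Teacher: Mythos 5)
Your proof is correct, and for the first identity in \eqref{basic_integrals} it follows the paper's argument essentially verbatim: restrict to the faces $\{|\hat x|_\infty=1\}$ via Proposition~\ref{prop:boundary0}, reorder the wedge product (sign $(-1)^{k(N-k-1)}$), account for the face-orientation discrepancy (sign $(-1)^k$), and finish with Fubini, Stokes on the regularized form, and Proposition~\ref{prop:Gamma}. The only organizational difference is in the second identity: the paper deduces it from the first by observing that $A\wedge du-(-1)^{k(N-k)}u\wedge dA$ is exact (being $d$ of the smooth, compactly-supported-away-from-the-origin form $(-1)^{(k-1)(N-k)}u\wedge A$), so its boundary integral vanishes, whereas you redo the symmetric face computation on $\{|\bar x|_\infty=1\}$ directly. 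Your route costs one more Fubini/Stokes computation but has the small advantage that the sign bookkeeping for the second identity becomes trivial (no reordering and no orientation mismatch, since the omitted coordinate sits in the $\bar x$-block), and it establishes the identity actually used later without passing through the first one.
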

\begin{proof}

Below we use the notation of integration on cubic chains see \cite[Chapter 4]{Spivak}. Let 
$$
Q^l:\,[-1,1]^l\to \mathbb{R}^l,\quad Q^l(x)=x,
$$ 
be the standard $l$-cube and $\partial Q^l$ its boundary with the natural induced orientation.  

Denote the boundary faces of $Q^N$ as
\begin{align*}
&I_j^{\pm}:\, [-1,1]^{N-1} \to \mathbb{R}^N,\\ 
& I_j^{\pm} (x_1,\ldots,x_{j-1},x_{j+1},\ldots,x_N) = (x_1,\ldots,x_{j-1},\pm 1, x_{j+1},\ldots,x_{N}). 
\end{align*}
Then
$
\partial Q^N=\sum_{j=1}^N (-1)^{j-1} (I_j^{(+)}-I_j^{(-)}).
$

By $Q^{N-k}[\bar x]$ we denote the $(N-k)$-dimensional cubes  with centers at $(\bar x,0)$,
$$
Q^{N-k}[\bar x]: [-1,1]^{N-k}\to \mathbb{R}^{N},\quad Q^{N-k}[\bar x](x_{k+1},\ldots,x_N) = (\bar x,x_{k+1},\ldots,x_N).
$$
The faces $\tilde I_j^{(\pm)} [\bar x]$ of these cubes are
\begin{gather*}
\tilde I_j^{(\pm)} [\bar x] :\,[-1,1]^{N-k-1}\to \mathbb{R}^{N}, \quad \tilde I_j^{(\pm)} [\bar x] (x_{k+1},\ldots,x_{j-1},x_{j+1},\ldots,x_{N})\\
= (\bar x, x_{k+1}\ldots,x_{j-1},\pm 1,x_{j+1},\ldots,x_N),
\end{gather*}
and the boundary of $Q^{N-k}[\bar x]$ is
$
\partial Q^{N-k}[\bar x] = \sum_{j=k+1}^N  (-1)^{j-k-1} (\tilde I_j^{(+)} [\bar x]-\tilde I_j^{(-)} [\bar x]).
$

By \eqref{eq:inter1} we have 
\begin{align*}
&\int\limits_{\partial Q^N} u \wedge dA  =\int\limits_{\partial Q^N} \ast_{\hat x} d \Gamma_{N-k} (\hat x)  \wedge dA =  \\
&= 
\sum_{j=k+1}^N  (-1)^{j-1}  \biggl(\ \int\limits_{I_j^{(+)}} -\int\limits_{I_j^{(-)}} \biggr) \ast_{\hat x} d \Gamma_{N-k} (\hat x)  \wedge d \theta \bigl( 2\sqrt{N}|\bar x|\bigr) \wedge \ast_{\bar x} d \Gamma_{k} (\bar x) \\
&= (-1)^{(N-k)k} \sum_{j=k+1}^N  (-1)^{j-k-1}  \biggl(\ \int\limits_{I_j^{(+)}} -\int\limits_{I_j^{(-)}} \biggr)  d \theta \bigl( 2\sqrt{N}|\bar x|\bigr) \wedge \ast_{\bar x} d \Gamma_{k} (\bar x)   \wedge  \ast_{\hat x} d \Gamma_{N-k} (\hat x) \\
&= (-1)^{(N-k)k}\int\limits_{Q^k}  d( \theta (2\sqrt{N}|\bar x|) \ast_{\bar x} d \Gamma_{k} (\bar x) )    \sum_{j=k+1}^N  (-1)^{j-k-1}  \biggl(\ \int\limits_{\tilde I_j^{(+)} [\bar x]} -\int\limits_{\tilde I_j^{(-)} [\bar x]} \biggr)  \ast_{\hat x} d \Gamma_{N-k} (\hat x) \\
&=  (-1)^{(N-k)k}\int\limits_{Q^k} d( \theta (2\sqrt{N}|\bar x|) \ast_{\bar x} d \Gamma_{k} (\bar x) ) 
=  (-1)^{(N-k)k}\int\limits_{ \partial Q^k}  \theta (2\sqrt{N}|\bar x|) \ast_{\bar x} d \Gamma_{k} (\bar x) \\
&=  (-1)^{(N-k)k}\int\limits_{ \partial Q^k} \ast_{\bar x} d \Gamma_{k} (\bar x) =  (-1)^{(N-k)k}. 
\end{align*}
Thus we get the first relation in \eqref{basic_integrals}.
Since 
$$
A \wedge du - (-1)^{k(N-k)} u \wedge dA =d\omega,\quad \omega = (-1)^{(k-1)(N-k)} u \wedge A,
$$
we have 
$$
\int\limits_{\partial [-1,1]^N} \biggl( A \wedge du - (-1)^{k(N-k)} u \wedge dA \biggr)  =0.
$$
This yields the second relation in \eqref{basic_integrals}. The proof of Lemma~\ref{L:basic} is complete.
\end{proof}
 

To summarize the results of this section, we have shown that the pair of forms $u$ and $A$ given by \eqref{base_u} and \eqref{base_A} is $(\Phi,N-k)$-separating in $\Omega=[-1,1]^N$ provided that the integral \eqref{du_est01} converges for $F\geq\Phi$ and the integral \eqref{b_est01} converges for $G \geq \Phi^*$.

\subsection{Generalized Cantor sets and their properties.}\label{sec:Cantor}

In this section we construct (generalized) Cantor sets. 

Let $l_j$, $j=0,1,2,\ldots$ be a decreasing sequence of positive numbers starting from $l_0=1$:
$$
1=l_0>l_1>l_2>\ldots
$$
such that $l_{j-1} >2l_j$ for all $j\in \mathbb{N}$. We start from $I_{0,1}=[-1/2,1/2]$. On each $m$-th step we we remove the open middle third of length $l_j-2l_{j+1}$ from the interval $I_{m,j}$, $j=1,\ldots, 2^m$ to obtain the next generation set of closed intervals $I_{m+1,j}$, $j=1,\ldots, 2^{m+1}$. The union of the closed intervals  $I_{m,j}  = [a_{m,j}, b_{m,j}]$, $j=1,\ldots, 2^m$ of length $l_m$  from the same generation forms the pre-Cantor set $C_m = \bigcup_{j=1}^{2^m} I_{m,j}$. The Cantor set $\frC = \cap_{m=0}^\infty C_m$ is the intersection of all pre-Cantor sets $C_m$.  

On each $m$-th step  we define the pre-Cantor measure as $\mu_m = |C_m|^{-1}\mathbbm{1}_{C_m}$, where $|C_m|= 2^m l_m$ is the standard Lebesgue measure of $C_m$, and the weak limit of the measures $\mu_m$ is the Cantor measure corresponding to $\frC$.

We require further that
$$
l_{m-1} - 2l_m > l_m - 2 l_{m+1} \Leftrightarrow l_{m+1} > \frac{3l_m - l_{m-1}}{2}, 
$$
at least for all sufficiently large $m$.

If the sequence $l_j$ satisfies the conditions above only for sufficiently large $j\geq j_0$,  then we modify it by taking the sequence $\tilde l_j = l_{j+j_0} (l_{j_0})^{-1}$, $j=0,1,2,\ldots$

For $k\in \mathbb{N}$ by $\frC^k$ and $\mu^{k}$ we denote the Cartesian powers of $k$ copies of $\frC$ and its corresponding Cantor measure, respectively.


\bigskip

\noindent\textbf{Definition}. ({\it Generalized Cantor sets}). Let $l_j = \lambda^j j^\gamma$, $\lambda\in (0,1/2)$, $\gamma\in\mathbb{R}$. We denote the corresponding Cantor set by $\frC_{\lambda,\gamma}$, the Cartesian product of its $k$ copies is $\frC^k_{\lambda,\gamma}$. For $\frC^k_{\lambda,\gamma}$ we denote $\frD = -k\ln 2 / \ln \lambda$, so that $\lambda^{\frD} = 2^{-k}$. We denote the Cantor measure corresponding to $\frC_{\lambda,\gamma}$ by $\mu_{\lambda,\gamma}$ and its $k$-th Cartesian power by $\mu^k_{\lambda,\gamma}$.

\noindent\textbf{Definition}. ({\it Meager Cantor sets}).  Let $l_j = \exp(-2^{j/\gamma})$, $\gamma>0$. Denote  the corresponding Cantor set by $\frC_{0,\gamma}$, and its Cartesian products by $\frC^k_{0,\gamma}$. For these sets we denote $\frD=0$. We denote the corresponding Cantor measures by $\mu^k_{0,\gamma}$.

\bigskip

Denote $\frC_t = \{\mathrm{dist}(\bar x, \frC)<t\}$, where $\frC$ is one of $\frC_{\lambda,\gamma}^k$ or~$\frC^k_{0,\gamma}$ defined above. Denote  by  $d_\infty (\bar x,\frC)$ the distance from $\bar x$ to $\frC$ in the maximum norm and let $\frC_{*,t} = \{d_\infty(\bar x, \frC)<t\}$. It is clear that $\frC_t \subset \frC_{*,t}$. Let $|F|_k$ denote the standard Lebesgue $k$-measure of $F\subset \mathbb{R}^k$. In the following lemma $B^{\bar x}_t$ is the open ball in  $\mathbb{R}^k$ with center at $\bar x$ and radius $t$.


\begin{lemma}\label{L:me}
We have
\begin{equation}\label{cm1}
| (\frC^k_{\lambda,\gamma})_t|_k \lesssim t^{k-\frD} (\ln t^{-1})^{\gamma \frD}, \quad  \mu_{\lambda,\gamma}(B^{\bar x}_t)  \lesssim t^\frD (\ln t^{-1})^{-\gamma \frD},
\end{equation}
and 
\begin{equation}\label{cm2}
|(\frC^k_{0,\gamma})_t|_k \lesssim t^k(\ln t^{-1})^{\gamma {\color{red} k}} , \quad \mu_{0,\gamma} (B^{\bar x}_t)  \lesssim  (\ln t^{-1})^{-\gamma{\color{red} k}}.
\end{equation}
\end{lemma}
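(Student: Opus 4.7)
The plan is to reduce all four estimates to one-dimensional calculations via the product structure and the max-norm. Because $(\frC^k_{\lambda,\gamma})_{t}\subset (\frC^k_{\lambda,\gamma})_{*,t} = \bigl((\frC_{\lambda,\gamma})_{*,t}\bigr)^{k}$ in $\mathbb{R}^k$, it suffices to bound the one-dimensional Lebesgue measure of the max-norm $t$-neighborhood of $\frC_{\lambda,\gamma}$ and then raise to the $k$-th power. Likewise, the ball $B^{\bar x}_t$ lies inside the max-norm cube of side $2t$, so the product measure satisfies $\mu^k(B^{\bar x}_t)\leq \bigl(\mu(J)\bigr)^k$ for a one-dimensional interval $J$ of length $2t$. (I read the lemma's $\mu_{\lambda,\gamma}(B^{\bar x}_t)$ and $\mu_{0,\gamma}(B^{\bar x}_t)$ as the $k$-dimensional product measures $\mu^k_{\lambda,\gamma}$, $\mu^k_{0,\gamma}$, as is forced by the exponents $\gamma\frD$ and $\gamma k$ on the right.)

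For the one-dimensional volume bound I would choose $m$ minimal with $l_m\leq t$, so that $l_{m-1}>t\geq l_m$. Since $\frC\subset C_m = \bigcup_{j=1}^{2^m} I_{m,j}$ with each $I_{m,j}$ of length $l_m$, thickening by $t$ gives $|(\frC)_{*,t}|_1 \leq 2^m(l_m+2t) \leq 3\cdot 2^m t$. For the one-dimensional Cantor mass of an interval $J$ of length $2t$, instead take $m$ maximal with $l_m\geq 2t$. The monotonicity of the gaps $l_{m-1}-2l_m$ built into the construction ensures that, for $m$ sufficiently large, the minimal gap at generation $m$ already exceeds $2t$, so $J$ meets at most two of the generation-$m$ cells $I_{m,j}$, each of Cantor mass $2^{-m}$; hence $\mu(J)\leq 2\cdot 2^{-m}$.

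It then remains to invert the defining relation for $m$ and express $2^{-m}$ asymptotically in $t$. For $l_j=\lambda^j j^\gamma$, set $s:=\ln 2/\ln(1/\lambda) = \frD/k$, so that $2=\lambda^{-s}$ and $2^{-m}=\lambda^{ms}$. From $\lambda^m m^\gamma \asymp t$ one obtains $m\sim \ln(1/t)/\ln(1/\lambda)$ and $2^{-m}\lesssim t^s(\ln 1/t)^{-\gamma s}$. Raising to the $k$-th power and using $sk=\frD$ yields $|(\frC^k_{\lambda,\gamma})_t|_k \lesssim t^{k(1-s)}(\ln 1/t)^{\gamma sk} = t^{k-\frD}(\ln 1/t)^{\gamma\frD}$ and $\mu^k_{\lambda,\gamma}(B^{\bar x}_t)\lesssim t^\frD(\ln 1/t)^{-\gamma\frD}$, which is \eqref{cm1}. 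For the meager sequence $l_j=\exp(-2^{j/\gamma})$, the relation $l_m\asymp t$ becomes $2^{m/\gamma}\asymp \ln(1/t)$, so $m\sim \gamma \log_2\ln(1/t)$ and $2^{-m}\lesssim (\ln 1/t)^{-\gamma}$; $k$-th powers then produce \eqref{cm2}.

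The main obstacle is the combinatorial claim $\mu(J)\leq 2\cdot 2^{-m}$ when $l_m\geq 2t$; this becomes a clean statement only once the gap monotonicity in the construction is exploited to dispose of pathologically small gaps at low generations. Everything else is a routine asymptotic inversion, though one must be slightly careful in the meager regime, where the doubly exponential decay of $l_j$ has to be inverted cleanly to produce a bound involving just a single logarithm of $1/t$.
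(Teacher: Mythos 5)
Your proof is correct and takes essentially the same route as the paper's: reduce to one dimension via the max-norm neighborhood and the product structure, count generation-$m$ cells (each of measure $2^{-m}$, each of length $l_m$), and then invert $l_m\asymp t$ (resp.\ $2^{m/\gamma}\asymp\ln(1/t)$ in the meager case) asymptotically. The only cosmetic difference is that you index the generation by the cell lengths $l_m\leq t<l_{m-1}$ rather than by the gap lengths $t\in(l_j/2-l_{j+1},\,l_{j-1}/2-l_j)$ as the paper does, so you obtain ``at most two cells meet $J$'' (which in fact follows already from $|J|\leq l_m$, without the gap-monotonicity you invoke) where the paper gets ``at most one''; this changes nothing in the $\lesssim$ bounds.
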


\begin{proof}
Let $l_j$ be the sequence of interval lengthes defining the corresponding Cantor set. Let $t \in (l_{j}/2 - l_{j+1}, l_{j-1}/2 - l_j)$. The set $\frC_{*,t}$ consists of $2^{kj}$ identical cubes of the form
$$
|\bar x - \bar x_{j,s}|_\infty  < \frac{l_j}{2} + t.
$$
So
$$
|\frC_{*,t}|_k\leq 2^{k(j-1)} (l_j + 2t)^k.
$$

First consider the case $\frC=\frC^k_{\lambda,\gamma}$ with $\lambda>0$. Then $l_{j}+2t \leq l_{j-1}-l_j \leq c t$ with some constant $c$ independent of $j$, so 
$$
|\frC_t|_k \leq |\frC_{*,t}|_k \lesssim 2^{kj} t^k.
$$
Recalling that $\frD = -k\ln 2 / \ln \lambda$ and $\lambda^{\frD} = 2^{-k}$, we get 
$$
2^{kj} = \lambda^{-j\frD}  = l_j^{-\frD} j^{\gamma \frD} \eqsim l_j^{-\frD} (\ln (1/l_j))^{\gamma \frD} \eqsim t^{-\frD} (\ln t^{-1})^{\gamma \frD}.
$$
Thus we arrive at the first inequality in \eqref{cm1}.

Now consider the case $\frC=\frC^k_{0,\gamma}$ (ultrathin Cantor sets). Then we get
$$
2^{kj}  = \left(\ln \frac{1}{l_j} \right)^{\gamma k} \approx  \left(\ln \frac{1}{t} \right)^{\gamma k}
$$
and this yields the second inequality from \eqref{cm1}.

Now let us estimate $\mu(B^{\bar x}_r)$. Any interval of length $2t$ with $t\in (l_{j}/2 - l_{j+1}, l_{j-1}/2 - l_j)$ can intersect at most one interval forming the $j$-th iteration of the pre-Cantor set. Since $B^{\bar x}_t$  lies within a cube with edge $2t$, then $\mu_{\lambda,\gamma}^k(B^{\bar x}_t) \leq 2^{-jk}$. Using the above estimates for $2^{jk}$ we arrive at \eqref{cm2}. The proof of Lemma~\ref{L:me} is complete.


\end{proof}

\subsection{From one singular point to fractal sets.}\label{sect:frac}

Let $k\in \{1,\ldots,N\}$, $\lambda \in (0,1/2)$ and $\gamma \in \mathbb{R}$, or $\lambda=0$ and $\gamma>0$ be given. Let $\Omega$ be the ball of radius $\sqrt{N}$ in $\mathbb{R}^N$ centered at the origin.

Now let $\frC=\frC^k_{\lambda,\gamma}$ be the generalized Cantor set with the given parameters and $\mu=\mu_{\lambda,\gamma}^k$ be the Cantor measure corresponding to $\frC^k_{\lambda,\gamma}$. Our construction will be based on the singular (or fractal contact/ barrier) set $\frS= \frC^k_{\lambda,\gamma} \times \{0\}^{N-k}$. Recall that for generalized Canter sets $\frC^k_{\lambda,\gamma}$ we set $\frD = -k \ln 2 / \ln \lambda$ (equivalently, $\lambda = 2^{-k/\frD}$) and for meager Cantor sets $\frC^k_{0,\gamma}$ we set $\frD=0$.

Let $d(\bar x,\frC)$ be the generalized distance, see \cite[Chapter VI, \S 2]{Stein16}  from $\bar x$ to $\frC$. In particular, $d(\bar x,\frC) \in C^\infty(\mathbb{R}^k \setminus \frC)$, 
\begin{equation}\label{dist_gen}
\frac{1}{C} \mathrm{dist} (\bar x,\frC) \leq d(\bar x,\frC) \leq C \mathrm{dist} (\bar x,\frC), \quad |\nabla d(\bar x,\frC) | \leq C,
\end{equation}
where $C>1$ and $ \mathrm{dist} (\bar x,\frC)$ is the standard Euclidian distance from $\bar x$ to $\frC$. Without loss, we assume that $C \geq 4$.

 Let $\theta:\mathbb{R}\to \mathbb{R}$ be a smooth nondecreasing function such that $\theta(t) =1$ for $t \geq 1/2$, $\theta(t)=0$ for $t\leq 1/4$, $|\theta'| \leq 4$. Let $\eta:\mathbb{R}\to \mathbb{R}$ be a smooth nondecreasing concave function such that $\eta(t)=t$ for $t\leq 1/4$ and $\eta(t) = 1/2$ for $t\geq 3/4$.

For $\frS = \frC \times \{0\}^{N-k}$, $\frC =\frC^k_{\lambda,\gamma}$, we define the $(N-k-1)$-form $u_\frS$, the $(k-1)$-form $A_\frS$, and the function $\rho_\frS$ by
\begin{align}\label{u_frac}
u_\frS &= \theta \left(\sqrt{N} C \frac{|\hat x|}{ \eta(d(\bar x,\frC))}\right) \ast_{\hat x} d \Gamma_{N-k}(\hat x), \\
A_\frS(\bar x, \hat x)& = \int A_\diamondsuit (\bar x-\bar y,\hat x) d\mu (\bar y), \quad A_\diamondsuit =  \theta \left(\sqrt{N}\frac{|\bar x|}{\eta(|\hat x|)} \right) \ast_{\bar x} d \Gamma_{k} (\bar x), \label{A_frac},\\
\label{def:rho}
\rho_\frS &= \theta \left(C\frac{|\hat x|}{3\eta(d(\bar x,\frC))}\right).
\end{align}
Here the constant $C$ is from \eqref{dist_gen}. The integral is understood as integrating the coefficients of the form. 

Further the $(N-k-1)$-form $u_\frS$ defined by \eqref{u_frac} and $(k-1)$-form $A_\frS$ defined by \eqref{A_frac} corresponding to the space dimension $N$ and the Cantor set $\frC_{\lambda,\gamma}$ will be denoted by $\mathcal{P}_1(k,N-k,\frD,\gamma)$ and $\mathcal{P}_2(k,N-k,\frD,\gamma)$. That is, $u_\frS= \mathcal{P}_1(k,N-k,\frD,\gamma)$ and $A_\frS=\mathcal{P}_2(k,N-k,\frD,\gamma)$. The function $\rho_\frS$ defined in \eqref{def:rho} will be also denoted by $\mathcal{P}_0(k,N-k,\frD,\gamma)$. 

\begin{lemma}\label{L:uest}
There holds $u\in W^{1,1}(\Omega,\Lambda^{N-k-1})\cap C^\infty(\overline\Omega\setminus \frS, \Lambda^{N-k-1})$ and
\begin{equation}\label{eq:u_est0}
\begin{gathered}
|\nabla u|(\bar x,\hat x) \lesssim \mathbbm{1}_{\{|\hat x|> \mathrm{dist}\,(\bar x,\frC)/(8C^2\sqrt{N})\}}  |\hat x|^{k-N},\\
|du|(\bar x,\hat x), |\delta u|(\bar x,\hat x) \lesssim \mathbbm{1}_{ \{ \mathrm{dist}\,(\bar x, \frC)/(8C^2\sqrt{N}) < |\hat x|< \mathrm{dist}\,(\bar x, \frC)/(2\sqrt{N})\}} |\hat x|^{k-N}.
\end{gathered}
\end{equation}
For a nonnegative function $F=F(\cdot,\cdot)$ with nonnegative arguments, satisfying $\triangle_2$--condition in the second argument and $F(\cdot,0)=0$,
\begin{equation}\label{du_est1}
\int\limits_\Omega F(|\hat x|, |\nabla u|)\, dV \lesssim \int\limits_0^{\sqrt{N}} F\left(t, t^{k-N}\right) t^{N-k-1} |\frC_t|_k \, dt.
\end{equation}
\end{lemma}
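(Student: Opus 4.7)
The plan is to write $u_{\frS} = \phi\,\alpha$ where $\phi(\bar x, \hat x) = \theta\bigl(\sqrt{N}\, C\, |\hat x|/\eta(d(\bar x, \frC))\bigr)$ is the cutoff and $\alpha(\hat x) = \ast_{\hat x}\,d\Gamma_{N-k}(\hat x)$. The form $\alpha$ is smooth and harmonic away from $\{\hat x = 0\}$ by Proposition~\ref{prop:Gamma}, and $d(\cdot, \frC)$ is smooth off $\frC$ by the generalized-distance construction. First I would verify smoothness on $\overline{\Omega}\setminus \frS$ by a local dichotomy: if $\bar x \notin \frC$ and $|\hat x|$ is small compared to $d(\bar x, \frC)$ then the argument of $\theta$ lies in $[0,1/4]$, so $\phi$ vanishes in a neighbourhood of the point; whereas if $\bar x \in \frC$ and $\hat x \neq 0$ then $\eta(d) = 0$ makes the argument infinite, so $\phi \equiv 1$ nearby and $u_\frS$ coincides with $\alpha$ locally. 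At all remaining points in $\overline{\Omega}\setminus\frS$, both factors are individually smooth.

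Next I would establish the pointwise estimates. The condition $\phi \neq 0$ forces $|\hat x| \geq \eta(d(\bar x, \frC))/(4\sqrt N C)$; combining with $\eta(t) \gtrsim t$ on $[0,1]$ and the bound $d(\cdot, \frC) \geq C^{-1}\mathrm{dist}(\cdot, \frC)$ from \eqref{dist_gen} gives $|\hat x| \gtrsim \mathrm{dist}(\bar x, \frC)$. The Leibniz rule yields $|\nabla u_\frS| \leq |\nabla \phi|\,|\alpha| + \phi\,|\nabla \alpha|$; the second term is bounded by $|\hat x|^{k-N}$ directly from the explicit form of $\alpha$. A chain-rule computation shows $|\nabla \phi| \lesssim 1/\eta(d(\bar x, \frC))$ on $\mathrm{supp}\,\theta'$, and on that support $|\hat x| \sim \eta(d(\bar x, \frC))$, so $|\nabla \phi|\,|\alpha| \lesssim |\hat x|^{-1}\cdot|\hat x|^{k-N+1} = |\hat x|^{k-N}$. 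For $du_\frS$ and $\delta u_\frS$ the harmonicity of $\alpha$ collapses Leibniz to $du_\frS = d\phi \wedge \alpha$ and $\delta u_\frS = -\nabla\phi \lrcorner \alpha$; these are supported where $\theta'$ is active, which gives the two-sided localisation $\mathrm{dist}(\bar x,\frC)/(8C^2\sqrt N) < |\hat x| < \mathrm{dist}(\bar x,\frC)/(2\sqrt N)$ and the same magnitude bound.

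For the integral estimate \eqref{du_est1} apply Fubini. For each $\hat x$ the $\bar x$-projection of the support of $\nabla u_\frS$ sits inside $\frC_{c|\hat x|}$ for a fixed $c>0$. Monotonicity of $F$ in the second argument and the $\Delta_2$-condition give $F(|\hat x|, |\nabla u_\frS|) \lesssim F(|\hat x|, |\hat x|^{k-N})$. Integrating $\bar x$ first yields the factor $|\frC_{c|\hat x|}|_k$, while polar coordinates in $\hat x \in \mathbb{R}^{N-k}$ produce the factor $t^{N-k-1}$. The polynomial-with-logarithm form of Cantor neighbourhoods supplied by Lemma~\ref{L:me} allows us to absorb $c$ into the implicit constant. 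Membership $u_\frS \in W^{1,1}(\Omega,\Lambda^{N-k-1})$ then follows by applying the resulting estimate with $F(s,t) = t$ and checking that $\int_0^{\sqrt N} t^{-1}|\frC_t|_k\,dt < \infty$ using the measure bounds of Lemma~\ref{L:me}, since the exponent $k - \frD$ is strictly positive for the sets under consideration (and the meager case is handled by logarithmic decay).

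The main bookkeeping challenge is the estimate $|\nabla \phi| \lesssim |\hat x|^{-1}$: the $\bar x$-derivative of $\phi$ produces a factor $|\eta'(d)|\,|\nabla d|/\eta(d)^2$ that would blow up near $\frC$, but the cutoff $\theta'$ is supported precisely where $|\hat x| \sim \eta(d)$, restoring the balance. A secondary point is the rigorous justification that $d\alpha = \delta\alpha = 0$ on the open set $\{\hat x \neq 0\}$, so that the Leibniz reduction for $du_\frS$ and $\delta u_\frS$ is a classical identity off $\frS$; once the pointwise bounds are in hand, the $W^{1,1}$-regularity across $\frS$ is a consequence of the finiteness of the integral estimate, which also shows that the strong and distributional derivatives coincide.
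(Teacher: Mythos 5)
Your proposal is correct and follows essentially the same route as the paper's (much terser) proof: smoothness off $\frS$ from the product structure of cutoff and harmonic kernel, pointwise bounds from \eqref{dist_gen} and Proposition~\ref{prop:Gamma} with the key cancellation $|\nabla\phi|\,|\alpha|\lesssim|\hat x|^{-1}\cdot|\hat x|^{k-N+1}$ on the support of $\theta'$, polar coordinates in $\hat x$ for \eqref{du_est1}, and $W^{1,1}$ membership from integrability of the classical gradient off the null set $\frS$ (the paper makes this last step precise via the ACL characterization, citing Ziemer). No gaps.
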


\begin{proof} Clearly, $u$ is smooth in $\Omega\setminus \frS$, and in particular its coefficients have ACL property.   Immediately from the definition of $u$, \eqref{dist_gen} and Proposition~\ref{prop:Gamma} we obtain \eqref{eq:u_est0}. Then using polar coordinates in $\mathbb{R}^{N-k}$ we obtain~\eqref{du_est1}. In particular, for $F(s,\tau)=\tau$ by Lemma~\ref{L:me} we get $|\nabla u|\in L^1(\Omega)$. By \cite[Theorem 2.1.4]{Zie89}  we conclude that the coefficients of $u$ are in $W^{1,1}(\Omega)$. 
\end{proof}

\begin{lemma}\label{L:Aest}
There holds $A\in W^{d,1}(\Omega,\Lambda^{k-1})\cap C^\infty(\overline\Omega\setminus \frS, \Lambda^{k-1})$,
$$
dA (\bar x ,\hat x)= \int b_\diamondsuit (\bar x-\bar y,\hat x) d\mu (\bar y), \quad \text{where}\quad b_\diamondsuit = dA_\diamondsuit,
$$
and 
\begin{equation}\label{eq:dAest0}
|dA|(\bar x,\hat x) \lesssim |\hat x|^{-k}  \mu (B^{\bar x}_{|\hat x|})  \mathbbm{1}_{\{\mathrm{dist}\,(\bar x,\frC) < |\hat x| /(2\sqrt{N})\}}(\bar x,\hat x).
\end{equation}
For a nonnegative function $G=G(\cdot,\cdot)$ with nonnegative arguments, satisfying $\triangle_2$--condition in the second variable and $G(\cdot,0)=0$, 
\begin{equation}\label{b_est1}
\int\limits_\Omega G(|\hat x|, |dA|)\, dV \lesssim \int\limits_0^{\sqrt{N}} G\bigl(t, t^{-k} \sup_{\bar x}\mu (B^{\bar x}_t \bigr) |\frC_t|_k\,  t^{N-k-1}  \, dt.
\end{equation}
\end{lemma}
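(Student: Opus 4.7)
The plan is to mirror the structure of Lemma~\ref{L:uest}, now exploiting the fact that $A_\frS$ is the convolution of the basic form $A_\diamondsuit$ with the Cantor measure $\mu$ in the $\bar x$-variable. The first step is smoothness on $\overline{\Omega}\setminus \frS$: since $A_\diamondsuit$ is smooth on $\mathbb{R}^N$ away from the origin, each translate $A_\diamondsuit(\cdot - (\bar y, 0))$ is smooth off $\{(\bar y, 0)\}$, and because $\mathrm{supp}\,\mu = \frC$, the joint singularity lies in $\frC \times \{0\}^{N-k} = \frS$; finiteness of $\mu$ lets us differentiate under the integral. The second step is the commutation formula $dA_\frS(\bar x, \hat x) = \int b_\diamondsuit(\bar x - \bar y, \hat x)\, d\mu(\bar y)$, which I would establish in the weak sense by testing against $\xi \in C_0^\infty(\Omega, \Lambda^{N-k})$ and applying Fubini:
\begin{align*}
\int_\Omega A_\frS \wedge d\xi
&= \int d\mu(\bar y) \int_\Omega A_\diamondsuit(\bar x - \bar y, \hat x) \wedge d\xi(\bar x, \hat x)\\
&= (-1)^{k} \int_\Omega \Bigl(\int b_\diamondsuit(\bar x - \bar y, \hat x)\, d\mu(\bar y)\Bigr) \wedge \xi(\bar x, \hat x)\, dV,
\end{align*}
where the middle step uses that $A_\diamondsuit \in W^{d,1}_{\mathrm{loc}}$ with weak differential $b_\diamondsuit$, a consequence of the basic-forms proposition.

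For the pointwise estimate \eqref{eq:dAest0}, \eqref{eq:prop1a} together with $\eta(t)\le t$ gives $|b_\diamondsuit(\bar x - \bar y, \hat x)| \lesssim |\hat x|^{-k}$ on the support $\{|\bar x - \bar y| < |\hat x|/(2\sqrt{N})\}$ (the inequality $|\bar x - \bar y|^{-k} \lesssim |\hat x|^{-k}$ follows from $|\bar x - \bar y| \geq \eta(|\hat x|)/(4\sqrt{N})$ for small $|\hat x|$, and is trivial on the bounded set $\Omega$ when $|\hat x|$ is bounded below). Integrating against $\mu$ yields
\[
|dA_\frS|(\bar x, \hat x) \leq \int |b_\diamondsuit(\bar x - \bar y, \hat x)|\, d\mu(\bar y) \lesssim |\hat x|^{-k}\, \mu\bigl(B^{\bar x}_{|\hat x|/(2\sqrt{N})}\bigr),
\]
and the right-hand side vanishes unless some $\bar y \in \frC$ satisfies $|\bar x - \bar y| < |\hat x|/(2\sqrt{N})$, i.e., $\mathrm{dist}(\bar x, \frC) < |\hat x|/(2\sqrt{N})$. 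The integral inequality \eqref{b_est1} then follows by writing $dV = d\bar x\, d\hat x$, switching to polar coordinates in $\hat x \in \mathbb{R}^{N-k}$, dominating $\mu(B^{\bar x}_{t/(2\sqrt{N})}) \leq \sup_{\bar x}\mu(B^{\bar x}_t)$, and noting that for each fixed $t$ the admissible $\bar x$ lie in $\frC_{t/(2\sqrt{N})} \subset \frC_t$, a set of $k$-measure $\lesssim |\frC_t|_k$; the $\Delta_2$-condition on $G$ absorbs the resulting constants.

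Finally, to obtain $A_\frS \in W^{d,1}$ I would apply \eqref{b_est1} with $G(s,\tau) = \tau$ and insert the bounds of Lemma~\ref{L:me} to verify $\int_0^{\sqrt{N}} t^{-k} \sup_{\bar x}\mu(B^{\bar x}_t)\, |\frC_t|_k\, t^{N-k-1}\,dt < \infty$, while $A_\frS \in L^1(\Omega)$ follows by an analogous but simpler Fubini argument using $|A_\diamondsuit| \lesssim |\bar x|^{1-k}$ and the fact that $\bar y$ ranges over a bounded set. The main technical delicacy I expect is rigorously justifying the Fubini interchange in the commutation step: the coefficients of $A_\diamondsuit$ and $b_\diamondsuit$ have non-integrable pointwise singularities at the origin, but since $\mu$ is a finite positive measure supported in $[-1/2,1/2]^k$ and the coefficients belong to $L^1(\Omega')$ uniformly for translations in this bounded set (by the basic-forms proposition), Tonelli's theorem applies and makes the exchange legitimate.
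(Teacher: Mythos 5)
Your proposal is correct and follows essentially the same route as the paper: pointwise domination of $|b_\diamondsuit(\bar x-\bar y,\hat x)|$ by $|\hat x|^{-k}$ on its support, integration against $\mu$ to get \eqref{eq:dAest0}, polar coordinates in $\hat x$ for \eqref{b_est1}, and a Fubini argument against compactly supported test forms to identify the weak differential (the paper tests via $(A,\delta\varphi)=(b,\varphi)$, you use the equivalent wedge formulation). The only addition is your explicit check that $A\in L^1$, which the paper leaves implicit.
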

\begin{proof}
Clearly, $A$ is smooth outside the contact set $\frS$. Denote 
$$
b(\bar x,\hat x) =  \int b_\diamondsuit (\bar x-\bar y,\hat x) d\mu (\bar y).
$$
Then
\begin{align*}
|b|(\bar x, \hat x) &\leq  \int |b_\diamondsuit| (\bar x-\bar y,\hat x) d\mu (\bar y) \lesssim \int |\hat x|^{-k}   \mathbbm{1}_{\{|\hat x|/(8\sqrt{N}) < |\bar x|< |\hat x|/(2\sqrt{N}) \}} (\bar x - \bar y,\bar x)  d\mu (\bar y) \\
&\lesssim  |\hat x|^{-k}  \int  \mathbbm{1}_{\{|\bar x|< |\hat x|/(2\sqrt{N}) \}} (\bar x - \bar y,\bar x)  d\mu (\bar y) \leq |\hat x|^{-k}  \mu (B^{\bar x}_{|\hat x|/(2\sqrt{N})})  \mathbbm{1}_{\{\mathrm{dist}\,(\bar x,\frC) < |\hat x| /(2\sqrt{N})\}}(\bar x,\hat x).
\end{align*}
Using polar coordinates in $\mathbb{R}^{N-k}$ we evaluate
\begin{gather*}
\int\limits_\Omega G(|\hat x|, |b|)\, dV \lesssim \int\limits_\Omega  G\bigl (|\hat x|, |\hat x|^{-k} \mu(B^{\bar x}_{|\hat x|}) \bigr)\mathbbm{1}_{\{\mathrm{dist}\,(\bar x,\frC) < |\hat x|\}}(\hat x,\bar x)  d\bar{x} d\hat{x} \\
\lesssim \int\limits_{\{|\hat x| \leq \sqrt{N}\}}    G\bigl(|\hat x|, |\hat x|^{-k} \sup_{\bar x}\mu(B^{\bar x}_{|\hat x|})\bigr) |\{\mathrm{dist}\,(\bar x,\frC) < |\hat x|\}|_{k}\,  d \hat{x}\\
=\int\limits_0^{\sqrt{N}} G\bigl(t, t^{-k} \sup_{\bar x}\mu (B^{\bar x}_t \bigr) |\frC_t|_k\,  t^{N-k-1}  \, dt.
\end{gather*}
In particular, using $G(s,t) = t$ and Lemma~\ref{L:me} we get $b\in L^1(\Omega)$.

For any $\varphi\in C_0^\infty (\Omega, \Lambda^k)$ using $A_\diamondsuit \in W^{1,1}(\Omega, \Lambda^k)$ we have
\begin{gather*}
(A, \delta \varphi) =  \int\limits_\Omega \biggl(\int A_\diamondsuit (\bar x-\bar y,\hat x) d\mu (\bar y) \biggr) \ \wedge \ast \delta \varphi   \\ 
=\int d\mu(y) \int\limits_\Omega  A_\diamondsuit (\bar x-\bar y,\hat x) \wedge \ast \delta \varphi  = \int d\mu (y) \int\limits_\Omega b_\diamondsuit   (\bar x-\bar y,\hat x) \wedge \ast \varphi = \int\limits_\Omega b \wedge \ast \varphi = (b,\varphi).
\end{gather*}
By definition, this implies $dA=b$, and as a consequence \eqref{eq:dAest0} and \eqref{b_est1}. 
The proof of Lemma~\ref{L:Aest} is complete.
\end{proof}

\begin{proposition} \label{dudA}
There holds \mbox{$|du| \cdot |dA| =0$} a.e. in $\Omega$.
\end{proposition}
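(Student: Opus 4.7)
The plan is to read off the supports of $du$ and $dA$ from the pointwise bounds already established in Lemmas~\ref{L:uest} and~\ref{L:Aest}, and then show that these supports are disjoint (outside a null set) by a short geometric argument. This mirrors the ``one saddle point'' computation in item (3) of the proposition after \eqref{base_A}, but now for the fractal version with the Cantor distance replacing $|\bar x|$.

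First, I would recall from \eqref{eq:u_est0} that
\begin{equation*}
\{|du|\neq 0\}\subset \Bigl\{(\bar x,\hat x)\;:\;|\hat x|<\frac{\mathrm{dist}(\bar x,\frC)}{2\sqrt{N}}\Bigr\},
\end{equation*}
and from \eqref{eq:dAest0} that
\begin{equation*}
\{|dA|\neq 0\}\subset \Bigl\{(\bar x,\hat x)\;:\;\mathrm{dist}(\bar x,\frC)<\frac{|\hat x|}{2\sqrt{N}}\Bigr\}.
\end{equation*}
If a point $(\bar x,\hat x)$ lay in the intersection of these two sets, then we would simultaneously have $\mathrm{dist}(\bar x,\frC)>2\sqrt{N}\,|\hat x|$ and $\mathrm{dist}(\bar x,\frC)<(2\sqrt{N})^{-1}|\hat x|$, which forces $4N<1$, a contradiction. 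Hence the two support sets are disjoint, and the product $|du|\cdot|dA|$ vanishes pointwise off a set where either factor is undefined.

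The only subtlety is to check that this pointwise conclusion gives the almost-everywhere statement in $\Omega$: by Lemmas~\ref{L:uest} and~\ref{L:Aest} both $du$ and $dA$ are smooth outside the singular set $\frS=\frC\times\{0\}^{N-k}$, which has zero Lebesgue $N$-measure (indeed $\frS$ is contained in the proper subspace $\{\hat x=0\}$). Thus the disjointness of the two supports obtained above holds off a Lebesgue null set, which is exactly what is needed.

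I do not expect any real obstacle here; the proposition is essentially an immediate bookkeeping consequence of the support estimates \eqref{eq:u_est0} and \eqref{eq:dAest0}. The only thing to be careful about is that one uses the estimates with the factor $(2\sqrt{N})^{-1}$ on the ``outer'' side of both inequalities, so that the incompatibility of the two cones is visible without any dependence on the tweaking constant $C$ from \eqref{dist_gen}.
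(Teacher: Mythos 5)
Your proof is correct and is essentially identical to the paper's: both read off the support inclusions $\{|du|\neq 0\}\subset\{|\hat x|<\mathrm{dist}(\bar x,\frC)/(2\sqrt N)\}$ and $\{|dA|\neq 0\}\subset\{\mathrm{dist}(\bar x,\frC)<|\hat x|/(2\sqrt N)\}$ from Lemmas~\ref{L:uest} and~\ref{L:Aest}, observe that these two cones are disjoint, and conclude a.e.\ vanishing because the remaining singular set $\frS$ has zero Lebesgue $N$-measure. No gaps.
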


\begin{proof}

By Lemmas~\ref{L:uest} and \ref{L:Aest}, 
$$
\{|du|> 0\} \subset \{|\hat x| \leq \mathrm{dist}\, (\bar x,\frC)  /(2\sqrt{N})\},\quad \{|dA|>0\} \subset\{|\hat x| \geq 2\sqrt{N}\, \mathrm{dist}\, (\bar x,\frC)\}.
$$ 
Thus $|du|\cdot |dA| = 0$ in $\overline{\Omega}\setminus \frS$. The claim follows since $\frS$ has $N$-dimensional Lebesgue measure zero.
\end{proof}


\begin{proposition}\label{prop:rho}

The function $\rho_\frS \in C^\infty(\mathbb{R}^N \setminus \frS)$, $0\leq\rho_\frS\leq 1$, $\rho_\frS = 1$ on the support of $dA$ and $\rho_\frS =0$ on the support of $du$.
\end{proposition}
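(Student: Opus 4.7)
The plan is to verify the four assertions directly from the formula
$$\rho_\frS=\theta\!\left(\frac{C|\hat x|}{3\eta(d(\bar x,\frC))}\right),$$
using the properties of $\theta$ and $\eta$ recorded at the start of Section~\ref{sec:Basic}, the generalized distance estimate \eqref{dist_gen}, and the support information for $du$ and $dA$ gathered in Lemmas~\ref{L:uest} and~\ref{L:Aest}. The bound $0\le\rho_\frS\le 1$ is immediate from $0\le\theta\le 1$.

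For smoothness on $\mathbb{R}^N\setminus\frS$, I would split the open set into the region $\{\bar x\notin\frC\}$ and the region $\{\bar x\in\frC,\ \hat x\neq 0\}$. On the first region, $d(\cdot,\frC)$ is a smooth positive function (by \cite[Chapter VI,\S 2]{Stein16}) and so is $\eta\circ d(\cdot,\frC)$, while $|\hat x|$ is smooth wherever $\hat x\neq 0$; near points with $\hat x=0$ the argument of $\theta$ lies in a neighbourhood of $0\in[0,1/4]$ where $\theta\equiv 0$, so the composition is smooth despite the non-smoothness of $|\hat x|$ at the origin. On the second region, $d(\bar x,\frC)=0$ formally, but a small enough neighbourhood of such a point has $|\hat x|$ bounded below by a positive constant and $d(\cdot,\frC)$ arbitrarily small, so the argument of $\theta$ exceeds $1/2$ there and $\rho_\frS\equiv 1$ on that neighbourhood. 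This is the only slightly delicate point of the proof.

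For the vanishing on $\operatorname{supp} du$: by Proposition~\ref{prop:Gamma}, $\ast_{\hat x}d\Gamma_{N-k}(\hat x)$ is closed, so in the computation of $du$ only the cutoff contributes, and (as used in Lemma~\ref{L:uest}) the support of $du$ is contained in the set where $\sqrt N C|\hat x|/\eta(d(\bar x,\frC))\in[1/4,1/2]$. On this set
$$\frac{C|\hat x|}{3\eta(d(\bar x,\frC))}=\frac{1}{3\sqrt N}\cdot\frac{\sqrt N C|\hat x|}{\eta(d(\bar x,\frC))}\le\frac{1}{6\sqrt N}\le\frac{1}{6}<\frac{1}{4},$$
and since $\theta\equiv 0$ on $[0,1/4]$, $\rho_\frS=0$.

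For the identity $\rho_\frS\equiv 1$ on $\operatorname{supp} dA$: by Lemma~\ref{L:Aest} (more precisely, by the localisation of the support of $b_\diamondsuit$ and the integral representation of $dA$ over the measure $\mu$), any point of $\operatorname{supp} dA$ satisfies $\mathrm{dist}(\bar x,\frC)<|\hat x|/(2\sqrt N)$. Combining \eqref{dist_gen} with the concavity inequality $\eta(t)\le t$, I get
$$\eta(d(\bar x,\frC))\le d(\bar x,\frC)\le C\,\mathrm{dist}(\bar x,\frC)<\frac{C|\hat x|}{2\sqrt N},$$
so the argument of $\theta$ is at least $2\sqrt N/3\ge 2/3>1/2$, putting us in the plateau $\theta\equiv 1$. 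This completes the proposal; the main non-cosmetic step is the smoothness at points of $\frC\times(\mathbb{R}^{N-k}\setminus\{0\})$, which is handled by showing that $\rho_\frS$ is locally constant there and so the apparent $1/d(\bar x,\frC)$ singularity is harmless.
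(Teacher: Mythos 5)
Your proof is correct and follows essentially the same route as the paper's: both arguments reduce the claims on $\operatorname{supp}du$ and $\operatorname{supp}dA$ to the support inclusions $\{|\hat x|\lesssim \eta(d(\bar x,\frC))\}$ and $\{\mathrm{dist}(\bar x,\frC)\le |\hat x|/(2\sqrt N)\}$ and then check that the argument of $\theta$ falls below $1/4$ or above $1/2$, respectively. The only difference is that you spell out the smoothness of $\rho_\frS$ on $\mathbb{R}^N\setminus\frS$ (local constancy near $\frC\times(\mathbb{R}^{N-k}\setminus\{0\})$ and near $\{\hat x=0\}$ away from $\frC$), which the paper dismisses as immediate; that elaboration is accurate and harmless.
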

\begin{proof}
The first two properties are immediate from the definition of $\rho_\frS$. From the definition of $u$, 
$$
\mathrm{supp}\, du \subset \{|\hat x| \leq \eta (d(\bar x,\frC)) / (\sqrt{N}C)\}.
$$ 
On this set, $\rho_\frS =0$ (recall that $\theta(t)=0$ if $t\leq 1/4$). On the other hand, 
$$
\mathrm{supp}\, dA \subset \{|\hat x| \geq 2\sqrt{N} \mathrm{dist}(\bar x,\frC)\} \subset \{|\hat x| \geq 2\sqrt{N} C^{-1} \eta(d(\bar x,\frC))\}.
$$
On this set, $\rho_\frS =1$ (recall that $\theta(t)=1$ if $t \geq 1/2$).
\end{proof}

\begin{proposition}\label{udA:bdry}
On the boundary of $Q=[-1,1]^N$ there holds
\begin{enumerate}

\item $u\wedge dA=0$  on $\partial [-1,1]^N \cap \{|\hat x|_\infty<1\}$;

\item On $\partial Q \cap \{|\hat x|_\infty=1\}$ there holds
$$
u=\ast_{\hat x} d \Gamma_{N-k},\quad dA = \int d (\theta (2\sqrt{N}|\bar x -\bar y|)) \wedge\ast_{\bar x} d \Gamma_k(\bar x-\bar y)\, d\mu(\bar y).
$$
\end{enumerate}

\end{proposition}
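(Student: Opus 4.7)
The plan is to mirror the argument used for the single saddle point in Proposition~\ref{prop:boundary0}, relying on two geometric facts: the Cantor set is contained in $[-1/2,1/2]^k$ (this is forced by the initial interval $I_{0,1}=[-1/2,1/2]$ in Section~\ref{sec:Cantor}), and on the face $\{|\hat x|_\infty=1\}$ the cut-off $\eta(|\hat x|)$ is already saturated at $1/2$, killing the $\hat x$-dependence of $A_\diamondsuit(\bar x-\bar y,\hat x)$.

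For claim (1), I would start from the observation that any point of $\partial Q\cap\{|\hat x|_\infty<1\}$ satisfies $|\bar x|_\infty=1$. Since $\frC\subset [-1/2,1/2]^k$, this forces $|\bar x-\bar y|\ge 1/2$ for every $\bar y\in\frC$, so $\mathrm{dist}(\bar x,\frC)\ge 1/2$. The support estimate \eqref{eq:dAest0} from Lemma~\ref{L:Aest} confines $dA$ to the set $\{\mathrm{dist}(\bar x,\frC)<|\hat x|/(2\sqrt{N})\}$, and on the face $|\hat x|\le\sqrt{N-k}\,|\hat x|_\infty<\sqrt{N}$ gives $|\hat x|/(2\sqrt{N})<1/2$. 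The two bounds are incompatible, so $dA=0$ on this part of the boundary and $u\wedge dA$ vanishes identically there.

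For claim (2), I work on the complementary face $|\hat x|_\infty=1$, where $|\hat x|\ge 1$. Because $\eta\le 1/2$ everywhere, the argument of the cut-off in \eqref{u_frac} is at least $2\sqrt{N}C\ge 1/2$ (at $\bar x\in\frC$ we interpret the ratio as $+\infty$ and $\theta(+\infty)=1$), so that cut-off equals $1$, giving $u=\ast_{\hat x}d\Gamma_{N-k}(\hat x)$. Similarly, $|\hat x|\ge 1>3/4$ in a full neighbourhood of this face, so $\eta(|\hat x|)\equiv 1/2$ there. In that neighbourhood $A_\diamondsuit(\bar x-\bar y,\hat x)=\theta(2\sqrt{N}|\bar x-\bar y|)\,\ast_{\bar x}d\Gamma_k(\bar x-\bar y)$ is independent of $\hat x$, and since $\ast_{\bar x}d\Gamma_k$ is harmonic outside the pole (Proposition~\ref{prop:Gamma}) one gets $dA_\diamondsuit(\bar x-\bar y,\hat x)=d\theta(2\sqrt{N}|\bar x-\bar y|)\wedge\ast_{\bar x}d\Gamma_k(\bar x-\bar y)$. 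Integrating in $\bar y$ against $\mu$ via the representation from Lemma~\ref{L:Aest} produces the claimed formula.

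There is really no serious obstacle; the only step that deserves a sentence of justification is the inclusion $\frC\subset[-1/2,1/2]^k$, because it is what drives the distance lower bound underlying claim (1). Everything else is a pointwise-in-$\bar y$ transcription of the corresponding steps in the proof of Proposition~\ref{prop:boundary0}.
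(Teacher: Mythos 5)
Your proposal is correct and follows essentially the same route as the paper: claim (1) via the support inclusion $\{dA\neq 0\}\subset\{\mathrm{dist}(\bar x,\frC)<|\hat x|/(2\sqrt N)\}$ combined with $\mathrm{dist}(\bar x,\frC)\ge 1/2$ on the faces where $|\bar x|_\infty=1$, and claim (2) via saturation of both cut-offs ($\theta=1$ in the definition of $u$, $\eta(|\hat x|)=1/2$ near $|\hat x|_\infty=1$) together with closedness of $\ast_{\bar x}d\Gamma_k$ and the representation $dA=\int dA_\diamondsuit\,d\mu$. The only cosmetic difference is that you bound the argument of $\theta$ in $u$ using $\eta\le 1/2$ directly, whereas the paper uses $\eta(t)\le t$ and the comparison between $d(\cdot,\frC)$ and $\mathrm{dist}(\cdot,\frC)$; both are valid.
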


\begin{proof}

By construction, 
$$
\{|dA| \neq 0\} \subset \{ \mathrm{dist}(\bar x,\frC) < |\hat x|/(2\sqrt{N})\}\subset \{ \mathrm{dist}(\bar x,\frC) < |\hat x|_\infty/2\}.
$$
If $|\bar x|_\infty=1$, then $\mathrm{dist}(\bar x,\frC) \geq 1/2$ (recall that $\frC \subset [-1/2,1/2]^{k}$), so $dA (\bar x,\hat x) \neq 0$ implies $|\hat x|_\infty>1$. Thus 
$$
\{|dA| \neq 0\} \cap \partial Q \subset   \{|\hat x|_\infty=1\} \cap \partial Q.
$$
Then in the definition of $u$ for the argument of $\theta$ for $|\hat x|_\infty\leq 1$, $|\bar x|_\infty=1$ we have 
$$
\sqrt{N} C \frac{|\hat x|}{\eta (d(\bar x,\frC))}\geq \sqrt{N} \frac{|\hat x|}{\mathrm{dist}\,(\bar x,\frC)} \geq 2.
$$
This implies $\theta \left(\sqrt{N} C \frac{|\hat x|}{\eta (d(\bar x,\frC))} \right)=1$, and therefore $u = \ast_{\hat x} d \Gamma_{N-k}$ on $\partial [-1,1]^N \cap \{|\hat x|_\infty=1\}$.
 
The formula for $dA$ follows then from $\eta(|\hat x|)=1/2$ for $x$ in a neighbourhood of $[-1,1]^N \cap \{|\hat x|_\infty=1\}$ and smoothness of the integrand in the definition of $A$ for $|\hat x|>0$.
\end{proof}

\begin{lemma}\label{L:bi}
For the forms $u$ and $A$ given by \eqref{u_frac} and \eqref{A_frac} there holds
\begin{equation}\label{basic_integrals1}
\int\limits_{\partial [-1,1]^N} u \wedge dA= (-1)^{k(N-k)}, \quad \int\limits_{\partial [-1,1]^N} A \wedge du =1.
\end{equation}
\end{lemma}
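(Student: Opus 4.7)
The plan is to reduce the fractal statement to the single-point version in Lemma~\ref{L:basic} via Fubini applied to the Cantor measure $\mu$, exploiting the fact that $\mu$ is a probability measure supported in the interior of the cube in the $\bar x$-variables.

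First I would use Proposition~\ref{udA:bdry} to pass from $\partial[-1,1]^N$ to the restricted region $\partial Q \cap \{|\hat x|_\infty = 1\}$: there $u$ coincides with $\ast_{\hat x} d\Gamma_{N-k}(\hat x)$ (independent of $\bar x$ and $\bar y$), and $dA$ is represented as the $\mu$-integral of the shifted building blocks $d(\theta(2\sqrt{N}|\bar x - \bar y|)) \wedge \ast_{\bar x} d\Gamma_k(\bar x - \bar y)$. On the remainder of the boundary $u \wedge dA$ vanishes, so no contribution is lost.

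Next I would swap the order of integration (justified because the integrand is bounded on the compact boundary slice and $\mu$ is a finite measure) to write
\begin{align*}
\int_{\partial Q^N} u \wedge dA = \int d\mu(\bar y)\int_{\partial Q^N} u \wedge dA_{\diamondsuit}(\,\cdot - \bar y, \,\cdot\,).
\end{align*}
For each fixed $\bar y$, the inner integral is essentially the same computation as in Lemma~\ref{L:basic}, just with the singular point of the $A$-factor shifted from the origin to $(\bar y, 0)$. I would repeat the reduction used there: factorize into an $\hat x$-slice integration, which produces the factor $1$ by Proposition~\ref{prop:Gamma} (the fundamental-solution flux through $\partial[-1,1]^{N-k}$), and a subsequent Stokes reduction of $d(\theta(2\sqrt N |\bar x - \bar y|)\, \ast_{\bar x} d\Gamma_k(\bar x - \bar y))$ over $Q^k$ to a boundary integral over $\partial Q^k$. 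The crucial geometric observation is that $\bar y \in \frC \subset [-1/2,1/2]^k$ lies strictly inside $Q^k$, so on $\partial Q^k$ one has $|\bar x - \bar y| \geq 1/2$; hence the cutoff $\theta(2\sqrt{N}|\bar x - \bar y|) = 1$ on $\partial Q^k$, and the remaining integral $\int_{\partial Q^k} \ast_{\bar x} d\Gamma_k(\bar x - \bar y) = 1$ by Proposition~\ref{prop:Gamma} applied around $\bar y$. This produces the constant value $(-1)^{k(N-k)}$ for each $\bar y \in \mathrm{supp}\,\mu$, and integrating against the probability measure $\mu$ yields $(-1)^{k(N-k)}$.

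For the second identity I would simply reuse the algebraic trick from the end of Lemma~\ref{L:basic}: the $(N-1)$-form $\omega = (-1)^{(k-1)(N-k)} u \wedge A$ satisfies $d\omega = A \wedge du - (-1)^{k(N-k)} u \wedge dA$ away from $\frS$, and the classical Stokes theorem applies because $u$ and $A$ are smooth near $\partial Q^N$ (since $\frS \subset [-1/2,1/2]^k \times \{0\}^{N-k}$ is separated from $\partial Q^N$). The main potential obstacle is making the Fubini-and-Stokes interchange rigorous in the fractal setting; this is handled by noting that all forms are smooth in a fixed neighbourhood of $\partial Q^N$ uniformly in $\bar y \in \mathrm{supp}\,\mu$, so no integrability issue arises near the boundary and the singular set $\frS$ plays no role in this boundary computation.
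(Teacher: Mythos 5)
Your argument is correct and follows essentially the same route as the paper: restrict to the faces $\{|\hat x|_\infty=1\}$ via Proposition~\ref{udA:bdry}, interchange the $\mu$-integration with the boundary integral, reduce each fixed-$\bar y$ term to the flux identities of Proposition~\ref{prop:Gamma} together with Stokes on $Q^k$ (using that $\theta(2\sqrt N|\bar x-\bar y|)=1$ on $\partial Q^k$ because $\operatorname{supp}\mu\subset[-1/2,1/2]^k$), and obtain the second identity from the exactness of $A\wedge du-(-1)^{k(N-k)}u\wedge dA$. No gaps.
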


\begin{proof}

Using Proposition~\ref{udA:bdry} and the notation of integration on cubic chains similar to Lemma~\ref{L:basic} we obtain
\begin{gather*} 
\int\limits_{\partial Q^N} u\wedge dA 
= \sum_{j=k+1}^N (-1)^{j-1} \biggl( \int\limits_{I^{+}_j}  - \int\limits_{I^{-}_j}\biggr) \ast_{\hat x} d \Gamma_{N-k}(\hat x) \wedge \int d (\theta (2\sqrt{N}|\bar x -\bar y|)) \ast_{\bar x} d\Gamma_k(\bar x - \bar y)) d\mu (\bar y) \\
= (-1)^{k(N-k)} \sum_{j=k+1}^N (-1)^{j-k-1} \biggl( \int\limits_{I^{+}_j}  - \int\limits_{I^{-}_j}\biggr) \biggl(d\int \theta(2\sqrt{N} |\bar x-\bar y|) \ast_{\bar x} d\Gamma_k(\bar x - \bar y)  d\mu (\bar y) \biggr) \wedge \ast_{\hat x} d \Gamma_{N-k}(\hat x)\\
=(-1)^{k(N-k)} \int \ \biggl[  \int\limits_{Q^k}d (\theta(2\sqrt{N} |\bar x-\bar y|) \ast_{\bar x} d\Gamma_k(\bar x - \bar y)) \sum_{j=k+1}^N (-1)^{j-k-1}\biggl( \int\limits_{\tilde I_j^{+}(\bar x)} - \int\limits_{\tilde I_j^{-}(\bar x)}  \biggr) \ast_{\hat x}d \Gamma_{N-k}(\hat x)\biggr] d\mu (\bar y)\\
=(-1)^{k(N-k)} \int d\mu(\bar y)= (-1)^{k(N-k)}.
\end{gather*}
Here we used that 
\begin{gather*}
 \int\limits_{Q^k}d (\theta(2\sqrt{N} |\bar x-\bar y|) \ast_{\bar x} d\Gamma_k(\bar x - \bar y)) \sum_{j=k+1}^N (-1)^{j-k-1}\biggl( \int\limits_{\tilde I_j^{+}(\bar x)} - \int\limits_{\tilde I_j^{-}(\bar x)}  \biggr) \ast_{\hat x}d \Gamma_{N-k}(\hat x)\\
 = \int\limits_{Q^k}d \bigl(\theta(2\sqrt{N} |\bar x-\bar y|) \ast_{\bar x} d\Gamma_k(\bar x - \bar y) \bigr)=
 \int\limits_{\partial Q^k}\theta(2\sqrt{N} |\bar x-\bar y|) \ast_{\bar x}d\Gamma_k(\bar x - \bar y)\\
 =\int\limits_{\partial Q^k} \ast_{\bar x} d\Gamma_k(\bar x - \bar y)=1
\end{gather*}
for all $y\in [-1/2,1/2]^k\subset \mathbb{R}^k$.

To calculate the second integral we use the same argument as in Lemma~\ref{L:basic}: the form $A\wedge du - (-1)^{k(N-k)} u \wedge dA$ is exact, therefore its integral over $\partial [-1,1]^N$ is zero.
\end{proof}

\subsection{Work-tool}\label{sub:work}

Here we gather the results of Section~\ref{sect:frac}, namely of Lemmas~\ref{L:uest}, \ref{L:Aest}, Propostion~\ref{dudA}, and Lemma~\ref{L:bi}. Recall that a pair of $(k-1)$-form $u$ and $(N-k-1)$-form $A$  is $(\Phi,k)$-separating if $u$ and $A$ are regular outside a closed set $\frS\subset \Omega$ of zero Lebesgue $N$-measure,  $u\in W^{d,\Phi(\cdot)} (\Omega, \Lambda^{k-1})$, $A\in W^{d,\Phi^*(\cdot)}(\Omega, \Lambda^{N-k-1})$, $|du|\cdot |dA|=0$ a.e. in $\Omega$, and $\int_\Omega A \wedge du =1$. The form of the following statement represents the duality between $u$ and $A$. The function $\Phi=\Phi(x,t)$ is a generalized Orlicz function, as in Section~\ref{sect:Orlicz}. The following lemma gives the general work-tool to construct a $(\Phi,k)$-separating pair. Let $\Omega=[-1,1]^N$.  

\begin{lemma}\label{lemma:integr}
\noindent (\textbf{i}) 
Let $u=\mathcal{P}_1(N-k,k,\frD,\gamma)$ and $A=\mathcal{P}_2(N-k,k,\frD,\gamma)$. Let $\Phi$ be such that $\Phi(x,t)\leq F_1(|\hat x|,t)$ on the support of $du$ and $\Phi(x,t)\geq F_2(|\hat x|,t)$ on the support of $dA$. If
\begin{equation}\label{c:sub}
\begin{gathered}
\mathcal{I}_1:=\int\limits_0^{\sqrt{N}} F_1(t,t^{-k})  |\frC_t|_{N-k} t^{k-1} \, dt<\infty,\\
\mathcal{I}_2:=\int\limits_0^{\sqrt{N}} F_2^* \bigl(t, t^{k-N}\sup_{\bar x} \mu (B^{\bar x}_t) \bigr)  |\frC_t|_{N-k}t^{k-1}\, dt<\infty,
\end{gathered}
\end{equation}
then the pair $(u,A)$ is $(\Phi,k)$-separating . 

\noindent(\textbf{ii}) 
Let $u=\mathcal{P}_2(k,N-k,\frD,\gamma)$ and $A=(-1)^{k(N-k)}\mathcal{P}_1(k,N-k,\frD,\gamma)$. Let $\Phi$ be such that $\Phi(x,t)\leq F_1(|\hat x|, t)$ on the support of $du$ and $\Phi(x,t)\geq F_2(|\hat x|,t)$ on the support of $dA$. If 
\begin{equation}\label{c:super}
\begin{gathered}
\mathcal{I}_1:=\int\limits_0^{\sqrt{N}} F_1 \bigl(t, t^{-k}\sup_{\bar x}\mu (B^{\bar x}_t ) \bigr) |\frC_t|_k\,  t^{N-k-1}  \, dt<\infty,\\
\mathcal{I}_2:=\int\limits_0^{\sqrt{N}} F_2^*  \bigl( t,t^{k-N} \bigr)  |\frC_t|_{k}t^{N-k-1}\, dt<\infty,
\end{gathered}
\end{equation}
then the pair $(u,A)$ is $(\Phi,k)$-separating. 

Moreover, in both cases there holds 
$$
 \int\limits_\Omega \Phi(x,|du|)\, dV\le C(N,k) \mathcal{I}_1,\quad \int\limits_\Omega \Phi^*(x,|dA|)\,dV\le C(N,k) \mathcal{I}_2.
$$

\end{lemma}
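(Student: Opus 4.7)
The plan is to verify the five defining conditions of a $(\Phi,k)$-separating pair by assembling the pointwise, topological, and boundary results already collected in Section~\ref{sect:frac}. Regularity of $u$ and $A$ outside the singular set (conditions (i)--(ii) of the definition) follows from the $C^\infty$-part of Lemmas~\ref{L:uest} and~\ref{L:Aest}, together with the $W^{d,1}$-membership obtained by taking the test function $F(s,\tau) = \tau$ in those lemmas. The disjointness $|du|\cdot|dA|=0$ a.e.\ is exactly Proposition~\ref{dudA}, and the non-degeneracy $\int_{\partial\Omega} A\wedge du = 1$ is supplied by Lemma~\ref{L:bi}, up to the sign bookkeeping described below. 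The only substantive task is thus the Orlicz-energy bound.

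For case (\textbf{i}) (subdimensional setup), I would apply the construction of Section~\ref{sect:frac} after swapping $k \leftrightarrow N-k$, so that $\frS = \frC^{N-k}_{\lambda,\gamma}\times\{0\}^{k} \subset \mathbb{R}^{N-k}_{\bar x}\times\mathbb{R}^{k}_{\hat x}$. The pointwise estimate of Lemma~\ref{L:uest} then reads $|du|\lesssim|\hat x|^{-k}$ on its support, and Lemma~\ref{L:bi} with $k\leftrightarrow N-k$ delivers $\int_{\partial Q} A\wedge du = 1$. Since every Orlicz function is nondecreasing in its second argument, the support-localized hypothesis $\Phi(x,t)\leq F_1(|\hat x|,t)$ gives
\begin{align*}
\int_\Omega \Phi(x,|du|)\, dV
= \int_{\mathrm{supp}\,du}\Phi(x,|du|)\, dV
\leq \int_\Omega F_1(|\hat x|,|du|)\, dV
\lesssim \mathcal{I}_1,
\end{align*}
where the last inequality is the co-area bound~\eqref{du_est1}. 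For the dual estimate, $\Phi(x,t)\ge F_2(|\hat x|,t)$ on $\mathrm{supp}\,dA$ yields $\Phi^*(x,s)\le F_2^*(|\hat x|,s)$ there by Orlicz duality, and Lemma~\ref{L:Aest} applied with $G=F_2^*$ bounds $\int_\Omega \Phi^*(x,|dA|)\, dV$ by $\mathcal{I}_2$.

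For case (\textbf{ii}) (superdimensional setup), the geometry is directly that of Section~\ref{sect:frac} with $\frS = \frC^{k}_{\lambda,\gamma}\times\{0\}^{N-k}$; here the roles are exchanged, in that $u=\mathcal{P}_2(k,N-k,\frD,\gamma)$ plays the algebraic part of ``$A$'' from that section and vice versa. The sign $(-1)^{k(N-k)}$ in the definition of $A$ is dictated by Lemma~\ref{L:bi}: one computes
\begin{align*}
\int_{\partial Q} A\wedge du
= (-1)^{k(N-k)}\!\int_{\partial Q} \mathcal{P}_1(k,N-k,\frD,\gamma)\wedge d\mathcal{P}_2(k,N-k,\frD,\gamma)
= (-1)^{2k(N-k)} = 1.
\end{align*}
The Orlicz-energy bounds follow verbatim as in case~(\textbf{i}), but now the pointwise estimate with the factor $\mu(B^{\bar x}_{|\hat x|})$ from Lemma~\ref{L:Aest} is applied to $u$, while the simpler estimate $|\hat x|^{k-N}$ from Lemma~\ref{L:uest} is applied to $A$, producing precisely the integrals $\mathcal{I}_1$ and $\mathcal{I}_2$ displayed in~\eqref{c:super}.

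There is no real obstacle in this lemma; it is a synthesis of the structural results established for the fractal building blocks. The one point requiring attention is matching the roles of $u$ and $A$ with the ``$u_\frS$, $A_\frS$'' notation of Section~\ref{sect:frac} across the two cases, and tracking the sign in case~(\textbf{ii}).
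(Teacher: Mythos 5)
Your proposal is correct and follows essentially the same route as the paper: regularity and $W^{d,1}$-membership from Lemmas~\ref{L:uest} and~\ref{L:Aest}, disjointness of supports from Proposition~\ref{dudA}, the normalization $\int_{\partial\Omega}A\wedge du=1$ from Lemma~\ref{L:bi}, and the energy bounds from the co-area estimates \eqref{du_est1} and \eqref{b_est1} combined with the support-localized comparison of $\Phi$ with $F_1$, $F_2$ and Orlicz duality. Your sign computation in case~(\textbf{ii}) and the role-swap bookkeeping are exactly what the paper's argument implicitly relies on.
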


\begin{proof}

The forms $u$ and $A$ are regular outside $\frS$ by construction. By Lemmas~\ref{L:uest},\ref{L:Aest} we have $u\in W^{d,1}(\Omega,\Lambda^{k-1})$ and $A\in W^{d,1}(\Omega,\Lambda^{N-k-1})$.  By Proposition~\ref{dudA}, $|du|\cdot |dA| =0$ outside $\frS$. Since $d(A\wedge du) = dA \wedge du =0$ outside $\frS$, Lemma~\ref{L:bi} implies
$$
\int\limits_{\partial \Omega} A \wedge du = \int\limits_{\partial [-1,1]^N} A \wedge du=1.
$$
It only remains to prove that 
$$
\int_\Omega \Phi(x,|du|)\, dV<\infty, \quad \int_\Omega \Phi^*(x,|dA|)\, dV<\infty.
$$
But this follows from estimates \eqref{du_est1} and \eqref{b_est1} by the assumptions of the lemma.
\end{proof}

In view of Section~\ref{sec:framework}, to construct an example for the Lavrentiev gap, it is sufficient to check the conditions of Lemma~\ref{lemma:integr}. In the following section we do this for the ``standard'' and ``borderline'' double phase models and for the variable exponent. 

\subsection{Example setups}\label{sec:setup}

Two cases of Lemma~\ref{lemma:integr} and different choices of the fractal contact set give us several variants of example setup. Further $p_0>1$ will be the  threshold parameter. Depending on the value of the threshold parameter~$p_0$, we design 5 different setups:
\begin{enumerate}
\item critical or one saddle point setup corresponds to the classical Zhikov checkerboard example \cite{Zhi86} ($N=2$, $k=1$) and its development by \cite{EspLeoMin04} ( $N>1$, $k=1$); 

\item supercritical  setup corresponds to the case $p_0>N/k$, in the scalar settting ($k=1$) of \cite{BalDieSur20} this corresponds to the superdimensional case $p_0>N$ with singular set on a line; 

\item subcritical case corresponds to the case $1<p_0<N/k$, in the scalar settting ($k=1$) of \cite{BalDieSur20} this corresponds to the subdimensional case $1<p_0<N$ with singular set on a hyperplane; 

\item right limiting critical case corresponds to the situation when~$p_0=N/k +0$ (that is, for the critical value $p_0=N/k$ we use the supercritical construction); 

\item left limiting critical case corresponds to the situation when~$p_0= N/k-0 $ (that is, for the critical value $p_0=N/k$ we use the subcritical construction). 
\end{enumerate}

Each of these setups  includes the  fractal set~$\frC$ (see Section~\ref{sec:Cantor}, in the ``critical'' case it is just one point), the barrier fractal set~$\frS$, the pair of the forms~$u$ and~$A$, and the function~$\tilde{\rho}$ which separates the supports of $du$ and $dA$: it is equal to $0$ on the support of $du$ and $1$ on the support of $dA$. The construction of the forms $u$, $A$ and the function $\tilde \rho$ is described in \ref{sec:Basic} (for one singular point case) and in \ref{sect:frac} (for the rest of cases).

One can easily verify (this is done in Section~\ref{subsec:stan_d}) that $du \in L^{p}(\Omega, \Lambda^k)$ for any $p<p_0$ and $dA \in L^{q'}(\Omega,\Lambda^{N-k})$ for any $q>p_0$ which explains why we call this parameter ``threshold''. The function $\tilde \rho$ is then used to construct the function $\Phi$ for which the pair $(u,A)$ is $(\Phi,k)$-separating.

The second free parameter of the construction --- the shrinking fractal parameter~$\gamma$ --- plays an important role later in refining our examples to the limiting case and in treating the borderline double phase and the log-log-H\"older exponents.

\begin{enumerate}[label=Setup{ {\arabic*}}]
 
\item {\it({Critical or  one saddle point)}}  \label{set1}  Let $p_0= N/k$ and set
\begin{align*}
\frC&=\{0\}^{N-k},\quad \frS = \{0\}^N, &\tilde \rho= \rho_\frS= \mathcal{P}_0(N-k,k,0,0),\\
 u &= u_\frS=\mathcal{P}_1(N-k,k,0,0), &A =A_\frS=\mathcal{P}_2(N-k,k,0,0).
\end{align*}

\item\label{set2}  {\it (Supercritical)} Let $p_0>N/k$. Define $\frD=(p_0k-N)/(p_0-1)$ from $p_0 = (N-\frD) / (k-\frD)$ and  set $\lambda= 2^{-k/\frD}$,
\begin{align*}
\frC &= \frC^{k}_{\lambda,\gamma},\quad \frS = \frC\times \{0\}^{N-k}, &\tilde \rho= 1-\rho_\frS=1-\mathcal{P}_0(k,N-k,\frD,\gamma), \\  
u &= A_\frS= \mathcal{P}_2(k,N-k,\frD,\gamma), &A =(-1)^{k(N-k)}u_\frS= (-1)^{k(N-k)} \mathcal{P}_1(k,N-k,\frD,\gamma).
\end{align*}

\item \label{set3} {\it (Subcritical)} Let $1<p_0<N/k$. Define $\frD=N-p_0k$ from $p_0= (N-\frD)/k$ and set $\lambda = 2^{-(N-k)/\frD}$, 
\begin{align*}\frC&= \frC^{N-k}_{\lambda,\gamma},\quad \frS = \frC\times \{0\}^{k}, &\tilde \rho= \rho_\frS= \mathcal{P}_0(N-k,k,\frD,\gamma),\\ 
u&=u_\frS=\mathcal{P}_1(N-k,k,\frD,\gamma),&A =A_\frS= \mathcal{P}_2(N-k,k,\frD,\gamma).
\end{align*}
\item \label{set4}  {\it (Right limiting (critical+0))} Let $p_0=N/k$ and set
\begin{align*}
\frC &= \frC^{k}_{0,\gamma},\quad \frS = \frC^k_{0,\gamma}\times \{0\}^{N-k}, &\tilde \rho= 1-\rho_\frS=1-\mathcal{P}_0(k,N-k,0,\gamma),\\
u &= A_\frS= \mathcal{P}_2(k,N-k,0,\gamma),&A =(-1)^{k(N-k)}u_\frS= (-1)^{k(N-k)} \mathcal{P}_1(k,N-k,0,\gamma).
\end{align*}

\item \label{set5} {\it (Left limiting (critical-0))} Let $p_0=N/k$ and set
\begin{align*}
\frC&= \frC^{N-k}_{0,\gamma},\quad \frS = \frC\times \{0\}^{k},&\tilde \rho= \rho_\frS=\mathcal{P}_0(N-k,k,0,\gamma),\\
u&=u_\frS=\mathcal{P}_1(N-k,k,0,\gamma),&A = A_\frS= \mathcal{P}_2(N-k,k,0,\gamma).
\end{align*}

\end{enumerate}

\ref{set1}, \ref{set3}, \ref{set5}  correspond to Lemma~\ref{lemma:integr} ({\bf ii}), and \ref{set2} and  \ref{set4} correspond to Lemma~\ref{lemma:integr} ({\bf i}).

\section{Applications}
\label{sec:App}
In this section we show the presence of the Lavrentiev gap for the following models 
\begin{enumerate}
\item double phase;

\item borderline double phase;

\item variable exponent.
\end{enumerate}
 To this end we use the framework defined in Section~\ref{sec:framework} and the Cantor set-based construction from Section~\ref{sect:frac}. That is, we have to show that the pair of forms $u$ and $A$ build as in Section~\ref{sect:frac} is $(\Phi,k)$-separating and satisfies the conditions of  Assumption~\ref{Ass:basic1} (the latter one for the Dirichlet problem) for the generalized Orlicz functions
 \begin{enumerate}
\item $\Phi(x,t)=t^{p}+a(x)t^q$;

\item $\Phi(x,t)=t^p\log^{-\beta}(e+t)+a(x)t^p\log^\alpha(e+t)$;

\item $\Phi(x,t) = t^{p(x)}$.
\end{enumerate}

Further in this section $k\in \{1,\ldots,N-1\}$, $\frC=\frC^{l}_{\lambda,\gamma}$ is a generalized Cantor set as in Section~\ref{sec:Cantor}, and $\frS = \frC \times \{0\}^{N-l}$ is the singular contact set, where $l=k$ or $l=N-k$.  As above, by $\frC_t$ we denote the $t$-neighbourhood of the set $\frC$.

Recall that the parameter $\lambda$ of the fractal set $\frC^m_{\lambda,\gamma}$ is connected to its ``fractal dimension'' $\frD$ by $\frD = -m \ln 2/\ln \lambda$ if $\frD>0$ and $\lambda=0$ if $\frD=0$, and the forms $u$ and $A$ defined in \eqref{u_frac} and $\eqref{A_frac}$, based on the contact set $\frC^m_{\lambda,\gamma}$ (or \eqref{base_u} and \eqref{base_A} for $\frD=\gamma=0$) are denoted by $\mathcal{P}_1(m,N-m,\frD,\gamma)$ and $\mathcal{P}_2(m,N-m,\frD,\gamma)$. That is, the forms $\mathcal{P}_j(m,N-m,\frD,\gamma)$, $j=1,2$, together with the function $\mathcal{P}_0(m,N-m,\frD,\gamma)$ are constructed using the singular set $\frC^m_{\lambda,\gamma}\times \{0\}^{N-m}$ with $\lambda = 2^{-m/\frD}$ if $\frD>0$ and $\lambda=0$ if $\frD=0$. 



Before passing on to the examples we make the following observation.

\begin{lemma}\label{L:cont0}
 Let $a_0$ be an increasing concave function and $a_0(0)=0$. Let $\rho = \rho_\frS$ be the function defined by \eqref{def:rho}. Then the functions $a_0(|\hat x|) \rho(x)$ and $a_0(|\hat x|)(1-\rho(x))$ have the modulus of continuity $\tilde C a_0(\cdot)$ for some $\tilde C>1$. 
\end{lemma}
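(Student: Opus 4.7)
The plan is to establish the pointwise bound $|f(x)-f(y)|\le \tilde C\, a_0(|x-y|)$ for $f(x)=a_0(|\hat x|)\rho(x)$; the case $f(x)=a_0(|\hat x|)(1-\rho(x))$ is identical, since $|1-\rho|\le 1$ and $|\nabla(1-\rho)|=|\nabla\rho|$. The starting point is the splitting
\begin{equation*}
f(x)-f(y) = \bigl(a_0(|\hat x|)-a_0(|\hat y|)\bigr)\rho(y) + a_0(|\hat x|)\bigl(\rho(x)-\rho(y)\bigr).
\end{equation*}
Concavity and monotonicity of $a_0$ with $a_0(0)=0$ give $|a_0(|\hat x|)-a_0(|\hat y|)|\le a_0(||\hat x|-|\hat y||)\le a_0(|x-y|)$, and $0\le\rho\le 1$ disposes of the first summand.

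For the second summand I would distinguish two regimes. If $|x-y|\ge |\hat x|/4$, the scaling inequality $a_0(\lambda t)\le \lambda\, a_0(t)$ for $\lambda\ge 1$ (a direct consequence of concavity with $a_0(0)=0$) gives $a_0(|\hat x|)\le 4\, a_0(|x-y|)$, and the crude bound $|\rho(x)-\rho(y)|\le 2$ closes this case.

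If instead $|x-y|<|\hat x|/4$, the entire segment between $x$ and $y$ lies in $\{|\hat z|\ge 3|\hat x|/4\}$, hence outside $\frS$, and $\rho$ is smooth along it. The key quantitative input is the gradient bound $|\nabla\rho(z)|\lesssim 1/|\hat z|$ on $\Omega\setminus \frS$. Writing $\rho=\theta(g)$ with $g(x)=C|\hat x|/(3\eta(d(\bar x,\frC)))$, the factor $\theta'(g)$ localises to the transition zone where $|\hat x|\sim \eta(d(\bar x,\frC))$; in that zone the bounds $|\eta'|\le 1$ and $|\nabla d|\le C$ from \eqref{dist_gen} give $|\nabla g|\lesssim 1/\eta(d(\bar x,\frC))\sim 1/|\hat x|$. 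Applying the mean value theorem on the segment, then using $|x-y|\le |\hat x|$ together with the monotonicity of $t\mapsto a_0(t)/t$ (another consequence of concavity with $a_0(0)=0$), one obtains
\begin{equation*}
a_0(|\hat x|)\,|\rho(x)-\rho(y)| \lesssim \frac{a_0(|\hat x|)}{|\hat x|}\,|x-y| \le \frac{a_0(|x-y|)}{|x-y|}\,|x-y| = a_0(|x-y|).
\end{equation*}

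The main obstacle is the gradient estimate on $\rho$; once it is verified by the direct computation from the definitions of $\rho$, $\eta$, and the regularized distance $d(\cdot,\frC)$, the remaining steps amount to a clean assembly of elementary concavity inequalities for functions with $a_0(0)=0$.
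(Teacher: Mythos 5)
Your proof is correct and follows essentially the same route as the paper's: the same two-term splitting, the same concavity inequalities for $a_0$ (subadditivity via $a_0(0)=0$ and monotonicity of $t\mapsto a_0(t)/t$), the same dichotomy between $|x-y|$ large and small relative to $|\hat x|$, and the same gradient bound $|\nabla\rho|\lesssim|\hat x|^{-1}$ in the second regime. If anything, you supply slightly more detail than the paper (the verification of the gradient bound and the observation that the segment avoids $\frS$), which the paper takes as immediate from the definition of $\rho_\frS$.
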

\begin{proof}

We this for the function $a_0(|\hat x|) \rho(x)$. First note that for $x=(\bar x,\hat x)$ there holds $|\nabla \rho|(x)\leq C |\hat x|^{-1}$ for some constant $C>1$. This follows straight from the definition of $\rho_\frS$ in \eqref{def:rho}. 

For $x=(\bar x,\hat x)$ and $y=(\bar y, \hat y)$ we evaluate 
\begin{gather*}
r(x,y):=|a_0(|\hat x|) \rho(x) - a_0(\hat y) \rho (y)| \leq |a_0(|\hat x|)-a_0(|\bar y|)|\rho(x) + a_0(|\hat y|) |\rho(x) - \rho(y)| \\
\leq a_0(|\hat x - \hat y|) + a_0(|\hat y|) |\rho(x) - \rho(y)|.
\end{gather*}

If $|x-y| \geq |\hat y|/2$ we evaluate $a_0(|\hat y|) \leq 2 a_0(|x-y|)$ using the concavity of $a_0$, and $|\rho(x)-\rho(y)|\leq 1$, therefore $r(x,y) \leq  3 a_0(|x-y|)$.

If $|x-y|\leq  |\hat y|/2$ then $|\hat y|/2\leq |\hat x| \leq 3 |\hat y|/2$, therefore $
|\rho(x)-\rho(y)| \leq 2C |\hat y|^{-1} |x-y|$. Now, 
$$
a_0(|\hat y|) |\rho(x)-\rho(y)| \leq 2C\frac{a_0(|\hat y|)}{|\hat y|} a_0(|x-y|) \frac{|x-y|}{a_0(|x-y|)} \leq 2C a_0(|x-y|) \leq a_0(|x-y|)
$$
since the concavity of $a_0$ implies that $a(s)s^{-1} \leq a(t) t^{-1}$ for $s\geq t$. Therefore, for $|x-y| \leq |\hat y|/4 $ we get $r(x,y) \leq (2C+1) a_0(|x-y|)$.

Thus in both cases we have $r(x,y) \leq 3C a_0(|x-y|)$.
\end{proof}

\subsection{Standard double phase model}
\label{subsec:stan_d} 

Let $1<p<q<+\infty$ and $\alpha\geq 0$, 
\begin{equation}\label{Orlicz1}
\varphi(t) = t^p, \quad \psi(t) = t^q.
\end{equation}
Denote 
\begin{equation}\label{adef}
a_0(t) = t^\alpha, \quad a(x) = \tilde \rho(x) a_0(|\hat x|) = \tilde \rho(x) |\hat x|^\alpha,  \quad  \Phi(x,t) = \varphi(t) + a(x) \psi(t) = t^p + a(x) t^q.
\end{equation}
where $\tilde\rho$ is a nonnegative function which will be described in Lemma~\ref{def:construct1}. 


\begin{lemma}\label{def:construct1}

\begin{enumerate}

\item Let $p_0= N/k$ and $p<N/k<q-\alpha k^{-1}$. Use one saddle point~\ref{set1}. 

\item Let $p_0>N/k$ and $p\leq p_0 \leq q-\alpha \frac{p_0-1}{N-k}$.  Take $\gamma> (p_0k-N)^{-1}$ if $p=p_0$, $\gamma <\frac{1-p_0}{p_0k-N}$ if $q=p_0+  \alpha \frac{p_0-1}{p_0k-N}$, and any $\gamma$ otherwise. Use supercritical~\ref{set2}. 


\item Let $1<p_0<N/k$ and $p\leq p_0\leq q-\alpha k^{-1}$.  Take $\gamma<(p_0k-N)^{-1}$ if $p=p_0$, $\gamma >(q-1)/(N-p_0k)$ if $q=p_0+\alpha k^{-1}$, and any $\gamma$ otherwise. Use subcritical~\ref{set3}. 


\item Let $p_0=N/k$ and $p\leq p_0 \leq q-\alpha k^{-1}$. Take $\gamma >(N-k)^{-1}$ if $p=p_0$, $\gamma <-1/k$ if $q=p_0+\alpha k^{-1}$, and any $\gamma>0$ otherwise. Use right limiting critical ~\ref{set4}. 

\item Let $p_0=N/k$ and $p\leq p_0 \leq q-\alpha k^{-1}$. Take $\gamma <(k-N)^{-1}$ if $p=p_0$, $\gamma >(q-1)(N-k)^{-1}$ if $q=p_0+\alpha k^{-1}$, and any $\gamma>0$ otherwise. Use left limiting critical~\ref{set5}. 


\end{enumerate}

Then for $\Phi$ given by \eqref{adef}, the pair of forms $u$ and $A$ is a $(\Phi,k)$-separating pair.
\end{lemma}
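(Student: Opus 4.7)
The strategy is to apply Lemma~\ref{lemma:integr} in each of the five setups, with uniform choices
\[
F_1(s,t)=t^p,\qquad F_2(s,t)=s^\alpha t^q,\qquad F_2^*(s,\tau)\asymp s^{-\alpha/(q-1)}\,\tau^{q/(q-1)}.
\]
First, I would verify the pointwise domination requirement of Lemma~\ref{lemma:integr}. In setups \ref{set1}, \ref{set3}, \ref{set5} the separating function is $\tilde\rho=\rho_\frS$, while in setups \ref{set2}, \ref{set4} the roles of $u$ and $A$ are interchanged and $\tilde\rho=1-\rho_\frS$. In each case Proposition~\ref{prop:rho} (applied to the actually occurring pair $(u_\frS, A_\frS)$) yields $\tilde\rho\equiv 0$ on $\supp du$ and $\tilde\rho\equiv 1$ on $\supp dA$. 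Therefore $a(x)=\tilde\rho(x)|\hat x|^\alpha$ vanishes on $\supp du$, giving $\Phi(x,t)=t^p=F_1(|\hat x|,t)$ there, and equals $|\hat x|^\alpha$ on $\supp dA$, giving $\Phi(x,t)\ge |\hat x|^\alpha t^q=F_2(|\hat x|,t)$ there. This is exactly the structural requirement of Lemma~\ref{lemma:integr}.

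Next, I would reduce the verification of $\mathcal{I}_1,\mathcal{I}_2<\infty$ to one-dimensional integrals of the form $\int_0^{\sqrt N} t^\beta (\ln t^{-1})^\delta\,dt$. Substituting the measure and neighborhood estimates from Lemma~\ref{L:me} into the formulas \eqref{c:sub}, \eqref{c:super}, one obtains explicit $\beta=\beta(p,q,\alpha,N,k,\frD)$ and $\delta=\delta(\gamma,\frD,k,N,\ldots)$. A single bookkeeping check gives, for each setup, expressions of the type
\[
\beta_1=N-1-pk-\frD,\qquad \beta_2=\tfrac{k(q-p_0)-\alpha}{q-1}-1
\]
(up to the obvious permutation of $k\leftrightarrow N-k$ in the ``dual'' setups), with $p_0=(N-\frD)/k$ for case~\textbf{(i)} or the analogous identity for case~\textbf{(ii)}; the logarithmic exponents $\delta_1,\delta_2$ are linear in $\gamma$ with coefficients depending on whether $\frD>0$ or we are in the meager regime $\frC^l_{0,\gamma}$.

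Finally, I would read off the convergence criteria. Away from the boundary of the admissible parameter region (that is, $p<p_0$ and $q>p_0+\alpha(\cdots)^{-1}$ strictly), both $\beta_i>-1$ and convergence holds independently of $\gamma$ — explaining the ``any $\gamma$'' clauses. At the borderline $p=p_0$, the exponent $\beta_1$ drops to $-1$ and convergence of $\mathcal{I}_1$ demands $\delta_1<-1$, which unravels to the first stated restriction on $\gamma$ in each setup. Symmetrically, at $q=p_0+\alpha(\cdots)^{-1}$, the identity $k(q-p_0)=\alpha$ (or its analogue) forces $\beta_2=-1$, and $\delta_2<-1$ translates into the second stated restriction. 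In the supercritical and left/right limiting setups the coefficient of $\gamma$ in $\delta_i$ is $\pm\frD$ or $\pm l$, producing precisely the bounds $\gamma\gtrless (p_0k-N)^{-1}$, $\gamma\gtrless (N-k)^{-1}$, $\gamma\gtrless (q-1)/(N-p_0k)$, etc., that appear in the statement.

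The main obstacle is organizational rather than conceptual: the five setups differ in (a) which case of Lemma~\ref{lemma:integr} is invoked, (b) the dimension $l\in\{k,N-k\}$ of the Cantor factor, (c) whether $\frD>0$ or $\frD=0$ (meager), and (d) whether $\tilde\rho=\rho_\frS$ or $1-\rho_\frS$. One must keep track of which Cantor dimension enters the volume/measure estimates of Lemma~\ref{L:me}, and verify that in the borderline identities $p_0=(N-\frD)/k$, $p_0=N/k$, the critical cancellation of the polynomial weight is exact — this cancellation is what makes the logarithmic refinement by $\gamma$ both necessary and sufficient.
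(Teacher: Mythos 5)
Your proposal is correct and follows essentially the same route as the paper: invoke Lemma~\ref{lemma:integr} with $F_1(s,\tau)=\tau^p$ and $F_2(s,\tau)=s^\alpha\tau^q$ (hence $F_2^*(s,\tau)\eqsim s^{-\alpha/(q-1)}\tau^{q'}$), insert the estimates of Lemma~\ref{L:me} into \eqref{c:sub} or \eqref{c:super} for each setup, and read off convergence of the resulting integrals $\int_0^{\sqrt N}t^{\beta}(\ln t^{-1})^{\delta}\,dt$, with the $\gamma$-restrictions arising exactly at the borderline cases $p=p_0$ and $q=p_0+\alpha(\cdot)^{-1}$ where $\beta=-1$. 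Your explicit verification of the pointwise domination via Proposition~\ref{prop:rho} (including the role swap in Setups~\ref{set2} and \ref{set4}) is a point the paper leaves implicit, and your sample exponents $\beta_1$, $\beta_2$ check out against the paper's computations.
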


\begin{proof}

We use Lemma~\ref{lemma:integr} with 
$$
F_1(s,\tau) = \varphi(\tau) = \tau^p, \quad F_2(s,\tau) = a_0(s) \psi(\tau) =  s^\alpha \tau^q
$$ 
and the estimates provided by Lemma~\ref{L:me}. Clearly, 
$$
F_2^* (s,\tau) = a_0(s) \psi^*\left( \frac{\tau}{a_0(s)} \right) =  c_q s^\alpha (\tau s^{-\alpha})^{q'}. 
$$

We treat the five cases according to Definition~\ref{def:construct1}.

\medskip 

\noindent\textbf{(a) Case $p_0=N/k$, $p<N/k<q-\alpha k^{-1}$.} We estimate 
$$
\int\limits_\Omega \varphi(|du|)\, dV \lesssim \int\limits_0^{\sqrt{N}} t^{-pk + N-1}\, dt<\infty
$$
provided that $p< N/k$. Also,
$$
\int\limits_\Omega a_0(|\hat x|)\psi^*(|dA|/a_0(t))\, dV \lesssim \int\limits_0^{\sqrt{N}} t^{q'(k-N-\alpha)+\alpha + N-1}\, dt <\infty
$$
provided that $q> \frac{N+\alpha}{k}$.

\medskip

\noindent\textbf{(b) Case $p_0>N/k$.} In this case 
$$
p_0 = \frac{N-\frD}{k-\frD},\quad \frD =\frac{p_0k-N}{p_0-1},\quad k-\frD = \frac{N-k}{p_0-1}.$$ 

We use case (\textbf{ii}) of Lemma~\ref{lemma:integr}.  For the first integral in \eqref{c:super}, we get
\begin{align*}
\int\limits_0^{\sqrt{N}} \varphi \left(\frac{\sup_{\bar x}\mu (B^{\bar x}_t )}{t^k} \right) |\frC_t|_k\,  t^{N-k-1}  \, dt  
&\lesssim \int\limits_0^{\sqrt{N}} t^{p (\frD-k)} (\ln t^{-1})^{-p \gamma \frD } t^{k-\frD} (\ln t^{-1})^{\gamma\frD} t^{N-k-1}\, dt\\
&=c \int\limits_0^{\sqrt{N}} t^{(p_0-p)(k-\frD)}  (\ln t^{-1})^{\gamma \frD(1-p)} \, \frac{dt}t{}<\infty.
\end{align*}

For the second integral in \eqref{c:super} we have
\begin{align*} 
\int\limits_0^{\sqrt{N}} a_0(t) \psi^* \left(\frac{t^{k-N}}{a_0(t)} \right)  |\frC_t|_{k}t^{N-k-1}\, dt  &\lesssim
\int\limits_0^{\sqrt{N}} t^{q'(k-N-\alpha)+\alpha}  t^{k-\frD} (\ln t^{-1})^{\gamma\frD } t^{N-k-1}\, dt\\
&= c \int\limits_0^{\sqrt{N}} t^{q'(k-N-\alpha)+N-\frD+\alpha} (\ln t^{-1})^{\gamma\frD} \, \frac{dt}{t}<\infty.
\end{align*}
Here one notes that 
$$
q'(k-N-\alpha)+N-\frD+\alpha >0 \Leftrightarrow q'< \frac{N-\frD+\alpha}{N-k+\alpha} \Leftrightarrow q > \frac{N-\frD+\alpha}{k-\frD} = p_0 + \frac{\alpha}{k-\frD}.
$$

Then by Lemma~\ref{lemma:integr} (\textbf{ii}) the pair of forms $(u,A)$ is $(\Phi,k)$-separating.

\medskip

\noindent\textbf{(c) Case $p_0<N/k$.} In this case $\frD = N-p_0k$. We use case (\textbf{i}) of Lemma~\ref{lemma:integr}. For the first integral in \eqref{c:sub}, we have
\begin{align*}
\int\limits_0^{\sqrt{N}} \varphi(t^{-k})  |\frC_t|_{N-k} t^{k-1} \, dt &\lesssim \int\limits_0^{\sqrt{N}} t^{-pk}   t^{N-k-\frD} (\ln t^{-1})^{\gamma\frD} t^{k-1}\, dt\\
&= c \int\limits_0^{\sqrt{N}} t^{(p_0-p)k} (\ln t^{-1})^{\gamma\frD} \, \frac{dt}{t}<\infty.
\end{align*}

For the second integral in \eqref{c:sub} we get
\begin{align*}
\int\limits_0^{\sqrt{N}} a_0(t) \psi^* \left(\frac{\sup_{\bar x} \mu (B^{\bar x}_t)}{t^{N-k} a_0(t)} \right)  |\frC_t|_{N-k}t^{k-1}\, dt 
&\lesssim \int\limits_0^{\sqrt{N}} t^{q' (\frD+k-N-\alpha)+\alpha} (\ln t^{-1})^{ -q' \gamma \frD } t^{N-k-\frD} (\ln t^{-1})^{\gamma\frD} t^{k-1} \, dt\\
&=c \int\limits_0^{\sqrt{N}} t^{q'(\frD+k-N-\alpha)+N-\frD + \alpha}  (\ln t^{-1})^{ \gamma \frD /(1-q) } \, \frac{dt}{t}<\infty.
\end{align*}
Here one notes that 
$$
q'(\frD+k-N-\alpha)+N-\frD + \alpha > 0 \Leftrightarrow q'< \frac{N-\frD + \alpha}{N+\alpha - \frD - k} \Leftrightarrow q>\frac{N+\alpha-\frD}{k}=p_0+\frac{\alpha}{k}.
$$

By Lemma~\ref{lemma:integr} i), the pair $(u,A)$  is $(\Phi,k)$-separating.


\medskip

\noindent\textbf{(d) Case $p_0=N/k+0$.} We use case (\textbf{ii}) of Lemma~\ref{lemma:integr}. For the first integral in \eqref{c:super}, we get 
\begin{align*}
\int\limits_0^{\sqrt{N}} \varphi \left(\frac{\sup_{\bar x}\mu (B^{\bar x}_t )}{t^k} \right) |\frC_t|_k\,  t^{N-k-1}  \, dt&\lesssim \int\limits_0^{\sqrt{N}} t^{-pk} (\ln t^{-1})^{-p \gamma k}  t^{k} (\ln t^{-1})^{\gamma k} t^{N-k-1} \, dt\\
&=c \int\limits_0^{\sqrt{N}}  t^{(p_0-p)k}(\ln t^{-1})^{\gamma k (1-p)} \, \frac{dt}{t}<\infty.
\end{align*}

For the second integral in \eqref{c:super} we have
\begin{align*}
\int\limits_0^{\sqrt{N}} a_0(t) \psi^* \left(\frac{t^{k-N}}{a_0(t)} \right)  |\frC_t|_{k}t^{N-k-1}\, dt&\lesssim \int\limits_0^{\sqrt{N}} t^{q'(k-N-\alpha)+\alpha} t^{k} (\ln t^{-1})^{\gamma k}t^{N-k-1}\, dt\\
&= c \int\limits_0^{\sqrt{N}} t^{q'(k-N-\alpha)+N+\alpha} (\ln t^{-1})^{\gamma  k} \, \frac{dt}{t}<\infty.
\end{align*}
Here one notes that 
$$
q'(k-N-\alpha)+N+\alpha>0 \Leftrightarrow q'<\frac{N+\alpha}{N+\alpha-k} \Leftrightarrow q> \frac{N+\alpha}{k} = p_0+\frac{\alpha}{k}.
$$
By Lemma~\ref{lemma:integr} (\textbf{ii}), the pair $(u,A)$ is $(\Phi,k)$-separating.


\medskip

\noindent\textbf{(e) Case $p_0=N/k-0$.} We use case (\textbf{i}) of Lemma~\ref{lemma:integr} For the first integral in \eqref{c:sub}, we have
\begin{align*}
\int\limits_0^{\sqrt{N}} \varphi(t^{-k})  |\frC_t|_{N-k} t^{k-1} \, dt&\lesssim \int\limits_0^{\sqrt{N}} t^{-pk}   t^{N-k} (\ln t^{-1})^{\gamma  (N-k)} t^{k-1}\, dt\\
&= c \int\limits_0^{\sqrt{N}} t^{(p_0-p)k} (\ln t^{-1})^{\gamma  (N-k)} \, \frac{dt}{t}<\infty.
\end{align*}

For the second integral in \eqref{c:super}, we get
\begin{align*}
\int\limits_0^{\sqrt{N}} a_0(t) \psi^* \left(\frac{\sup_{\bar x} \mu (B^{\bar x}_t)}{t^{N-k} a_0(t)} \right)  |\frC_t|_{N-k}t^{k-1}\, dt&\lesssim \int\limits_0^{\sqrt{N}} t^{(k-N-\alpha)q'+\alpha } (\ln t^{-1})^{-q' \gamma }    t^{N-k} (\ln t^{-1})^{\gamma  (N-k)}t^{k-1}\, dt\\
&=c \int\limits_0^{\sqrt{N}} t^{q'(k-N-\alpha)+N+\alpha}  (\ln t^{-1})^{ \gamma  (N-k)/ (1-q) } \, \frac{dt}{t}<\infty.
\end{align*}
By Lemma~\ref{lemma:integr} (\textbf{i}), the pair of forms $(u,A)$ is $(\Phi,k)$-separating.


\end{proof}

\begin{theorem}\label{theorem:double} 
Let $p<N/k$ and $q>p+\alpha k^{-1}$. Then there exists $p_0\in (1,N/k)$ such that $p<p_0<q-\alpha k^{-1}$ (one can also take $p=p_0$ and choose $\gamma<(p_0k-N)^{-1}$) and therefore a $(\Phi,k)$-separating pair of forms $(u,A)$  for $\Phi$ defined by  \eqref{adef}, and $\tilde \rho$ from Lemma~\ref{def:construct1}. 

Let $q>p+\alpha (p-1)/(N-k)$ and $q> \frac{N+\alpha}{k}$. Then there exists $p_0>N/k$ satisfying $p<p_0<q-\alpha (p_0-1)/(N-k)$ (one can also take $p=p_0$ and choose $\gamma>(p_0k-N)^{-1}$) and therefore a $(\Phi,k)$-separating pair of forms $(u,A)$ for $\Phi$ defined by \eqref{adef}, and $\tilde \rho$ from Lemma~\ref{def:construct1}. 

In these cases $H^{d,\Phi(\cdot)}(\Omega,\Lambda^{k-1}) \neq W^{d,\Phi(\cdot)}(\Omega,\Lambda^{k-1})$. Let $\eta \in C_0^\infty(\Omega)$ be such that $\eta=1$ in a neighbourhood of $\frS = \frS (u,A)$, $A^\circ = \eta A$, and $b=dA^\circ$. Then for the functional $\mathcal{F}_{\Phi,b}$ there holds
$$
 \inf  \mathcal{F}_{\Phi,b}(W^{d,\Phi(\cdot)}_T(\Omega,\Lambda^{k-1})) < \inf  \mathcal{F}_{\Phi,b}(H^{d,\Phi(\cdot)}_T (\Omega,\Lambda^{k-1})).
$$
For sufficiently large $t>0$ and 
  \begin{align*}
    w_t &= \argmin
          \mathcal{F}_{\Phi,0}\big( tu^\partial+ W_0^{1,\Phi(\cdot)}(\Omega,\Lambda^{k-1})\big) \qquad
    \\
    h_t &= \argmin \mathcal{F}_{\Phi,0}\big( tu^\partial+ C_0^{\infty}(\Omega,\Lambda^{k-1})\big)
  \end{align*}
  we have $w_t \neq h_t$ and $\mathcal{F}_{\Phi,0}(w_t) < \mathcal{F}_{\Phi,0}(h_t)$.
\end{theorem}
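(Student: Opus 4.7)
The proof assembles the earlier machinery. First, I verify the hypotheses of Lemma~\ref{def:construct1} to produce a $(\Phi,k)$-separating pair $(u,A)$. Under the first hypothesis set ($p < N/k$ and $q > p + \alpha/k$), the open interval $(p,\min(N/k, q - \alpha/k))$ is non-empty; any $p_0$ chosen there (with any $\gamma$) satisfies part (c) of Lemma~\ref{def:construct1} (Setup~\ref{set3}). Under the second set ($q > p + \alpha(p-1)/(N-k)$ and $q > (N+\alpha)/k$), the inequality $p_0 \leq q - \alpha(p_0-1)/(N-k)$ at $p_0 = N/k$ is equivalent to $q \geq (N+\alpha)/k$ and is strict by hypothesis; hence it persists for $p_0$ slightly above $N/k$, and the constraint $p \leq p_0$ holds because $p \leq N/k$ (otherwise one takes $p_0 = p$ with $\gamma > (pk-N)^{-1}$), putting us into part (b) (Setup~\ref{set2}). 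In either case Corollary~\ref{corr:nondense1} gives $H^{d,\Phi(\cdot)} \neq W^{d,\Phi(\cdot)}$, and fixing $\eta\in C_0^\infty(\Omega)$ equal to $1$ near $\frS$, $A^\circ = \eta A$, $b = dA^\circ$, Theorem~\ref{main} yields the Lavrentiev gap.

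For the Dirichlet statement, I verify Assumption~\ref{Ass:basic1} for every sufficiently large $t$ and apply Theorem~\ref{thm:harmonic}. By Proposition~\ref{prop:rho}, $\tilde\rho\equiv 0$ on $\mathrm{supp}(du)$, so $a$ vanishes there and $\mathcal{F}(tu) = t^p\int_\Omega |du|^p\,dV =: C_u t^p$ with $C_u<\infty$. On $\mathrm{supp}(b)\subset \mathrm{supp}(dA)$ we have $\tilde\rho\equiv 1$, so $\Phi(x,\tau) \geq |\hat x|^\alpha \tau^q =: F_2(|\hat x|,\tau)$; taking conjugates, $\Phi^*(x,\sigma) \leq F_2^*(|\hat x|,\sigma) = c_q\,\sigma^{q'}|\hat x|^{-\alpha(q'-1)}$, which is $q'$-homogeneous in $\sigma$. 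Hence for every $s \geq 0$,
\[
\mathcal{F}^*(sb) \leq \int_\Omega F_2^*(|\hat x|, s|b|)\,dV = s^{q'}\int_\Omega F_2^*(|\hat x|, |b|)\,dV =: C_b\,s^{q'},
\]
with $C_b<\infty$: finiteness of $\int F_2^*(|\hat x|,|b|)\,dV$ follows from Lemma~\ref{L:Aest} and is exactly the convergence of $\mathcal{I}_2$ in Lemma~\ref{lemma:integr} verified inside the proof of Lemma~\ref{def:construct1}. Choosing $s = \varepsilon t^{q-1}$ with $\varepsilon>0$ small enough that $C_b\varepsilon^{q'} \leq \varepsilon/2$, the desired inequality $C_u t^p + C_b\varepsilon^{q'} t^q < \varepsilon t^q$ reduces to $C_u t^p < (\varepsilon/2)t^q$, which holds for all $t\geq (2C_u/\varepsilon)^{1/(q-p)}$. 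Theorem~\ref{thm:harmonic} then gives $w_t \neq h_t$ and $\mathcal{F}(w_t)<\mathcal{F}(h_t)$ for every such $t$.

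The delicate point is the sharp $s^{q'}$-growth of $\mathcal{F}^*(sb)$. The abstract $\Delta_2$ bound on $\Phi^*$ gives only $s^{p'}$-growth, which is insufficient: since $(p-1)(p'-1) = 1$, no scaling $s = t^\sigma$ can satisfy both $\sigma > p-1$ (needed to dominate $C_u t^p$) and $\sigma < 1/(p'-1)$ (needed for $s^{p'}$ to stay below $ts$). Exploiting the double-phase structure of $\Phi$ on $\mathrm{supp}(b)$, where $|\hat x|^\alpha > 0$, via the $q'$-homogeneity of the conjugate of $|\hat x|^\alpha\tau^q$, is therefore essential.
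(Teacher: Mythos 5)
Your proposal is correct and follows essentially the same route as the paper: assemble the separating pair via Lemma~\ref{def:construct1}, invoke Corollary~\ref{corr:nondense1} and Theorem~\ref{main}, and verify Assumption~\ref{Ass:basic1} by exploiting the $t^p$-homogeneity of $\mathcal{F}(tu)$ (since $a=0$ on $\mathrm{supp}(du)$) together with the $s^{q'}$-homogeneous conjugate bound on $\mathrm{supp}(dA)$. The paper takes $s=t^{p/q'}$ instead of your $s=\varepsilon t^{q-1}$, an immaterial difference; your explicit justification of the $q'$-growth via $F_2^*$ is a slightly more careful rendering of the inequality the paper states as $\mathcal{F}^*_{\Phi,0}(sb)\leq s^{q'}\mathcal{F}^*_{\Phi,0}(b)$.
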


\begin{proof}
We have only to check Assumption~\ref{Ass:basic1}. Indeed, since $\tilde\rho=0$ on the support of $du$ and $\tilde \rho=1$ on the support of $b=dA$, 
$$
\mathcal{F}_{\Phi,0}(tu)=t^p\mathcal{F}_{\Phi,0}(u), \quad \mathcal{F}_{\Phi,0}^*(sb) \leq s^{q'} \mathcal{F}_{\Phi,0}^*(b).
$$
Take $s=t^{p/q'}$ Then for sufficiently large $t$ there holds
$$
\mathcal{F}_{\Phi,0}(tu) + \mathcal{F}^*_{\Phi,0}(sb) \leq t^p(\mathcal{F}_{\Phi,0}(u)+\mathcal{F}^*_{\Phi,0}(b)) <ts= t^{1+\frac{p}{q'}}
$$
since $p< 1+ \frac{p}{q'}$ if $p<q$.
\end{proof}

Note that here $\Phi(x,t) =  t^p +  a(x)t^q$ where $a\in C^\alpha(\overline{\Omega})$ (by Lemma~\ref{L:cont0}). This proves Theorem \ref{theoremA}.

\subsection{Borderline Double Phase}
\label{subsec:bor} 

Let $p_0>1$, $\alpha,\beta \in \mathbb{R}$, $\varkappa\geq 0$ such that 
\begin{equation}\label{ab_cond}
\alpha+\beta>p_0+\varkappa.
\end{equation}
 Let $\varphi$ and $\psi$ be two Orlicz functions such that
\begin{equation}\label{OrliczLog}
\varphi \lesssim \psi,\quad \varphi (t) \lesssim t^{p_0} \ln^{-\beta} (e+t), \quad \psi^*(t) \lesssim t^{p_0'} \ln ^{\alpha/(1-p_0)}(e+t)
\end{equation}
for large $t$. Denote 
\begin{equation}\label{adef1}
a_0(t) = \ln^{-\varkappa}(1/t), \quad a(x) = \tilde \rho(x) a_0(|\hat x|), \quad \Phi(x,t) = \varphi(t) + a(x) \psi(t),
\end{equation}
where $\tilde\rho$ is a nonnegative function to be defined later.


\begin{lemma}\label{def:construct}

\begin{enumerate}

\item Let $p_0= N/k$ and assume that $\beta>1$ and $\alpha+1>\kappa +p_0$. Use one saddle point~\ref{set1}.  


\item If $p_0>N/k$, define $\frD$ from $p_0 = (N-\frD) / (k-\frD)$ and take $\gamma$ satisfying $(1-\beta)/(p_0-1) <\gamma \frD < (\alpha-\varkappa-p_0+1)/(p_0-1)$. Use supercritical~\ref{set2}. 



\item If $1<p_0<N/k$, define $\frD$ from $p_0= (N-\frD)/k$ and take $\gamma$ satisfying  $p+\varkappa-\alpha-1<\gamma \frD < \beta-1$. Use subcritical~\ref{set3}. 



\item If $p_0=N/k$ and additionally $\alpha>p_0-1+\varkappa$, take $\gamma$ satisfying $(1-\beta)/(p_0-1)<\gamma  k<(\alpha-\varkappa-p_0+1)/(p-1)$. Use right limiting critical~\ref{set4}. 
\item If $p_0=N/k$ and additionally $\beta >1$, take $\gamma$ satisfying $p_0+\varkappa-\alpha-1<\gamma (N-k)<\beta-1$. Use left limiting critical~\ref{set5}. 



\end{enumerate}
Then for $\Phi$ given by \eqref{adef1}, the pair of forms $u$ and $A$ is a $(\Phi,k)$-separating pair.

\end{lemma}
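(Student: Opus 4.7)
The plan is to verify the two integrability conditions of Lemma~\ref{lemma:integr} in each of the five setups, following the template of Lemma~\ref{def:construct1}. Since $\tilde\rho$ vanishes on $\operatorname{supp} du$ and equals one on $\operatorname{supp} dA$, we have $\Phi\le\varphi$ on $\operatorname{supp} du$ and $\Phi\ge a_0(|\hat x|)\psi$ on $\operatorname{supp} dA$, so we may take $F_1(s,\tau)=\varphi(\tau)$ and $F_2(s,\tau)=a_0(s)\psi(\tau)$, with partial Young conjugate $F_2^{\ast}(s,\tau)=a_0(s)\psi^{\ast}(\tau/a_0(s))\lesssim a_0(s)^{1-p_0'}\tau^{p_0'}\bigl(\ln(e+\tau/a_0(s))\bigr)^{\alpha/(1-p_0)}$.

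In every setup the threshold dimension $\frD$ (with $\frD=0$ in the critical and limiting cases) is chosen so that, after inserting the bounds from Lemma~\ref{L:me} for $|\frC_t|$ and $\sup_{\bar x}\mu(B^{\bar x}_t)$ into the integrands of \eqref{c:sub} or \eqref{c:super}, and replacing $\ln(e+\tau)$ by its leading asymptotic (a constant multiple of $\ln(1/t)$, since $\tau$ is a negative polynomial power of $t$ with at most logarithmic corrections), the polynomial $t$-factors cancel exactly. What remains is an integrand of the pure form $t^{-1}\bigl(\ln(1/t)\bigr)^{\mu}$, and convergence reduces to $\mu<-1$. Reading off $\mu$ in each case gives the admissible ranges for $\gamma$ stated in the lemma. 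The identities producing the polynomial cancellation are $p_0(k-\frD)=N-\frD$ and $(N-k)p_0'=N-\frD$ in the supercritical case, and their duals in the subcritical case.

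As a representative calculation, in the supercritical setup~\ref{set2} Lemma~\ref{lemma:integr}(\textbf{ii}) reduces the two integrability conditions to the convergence of
\begin{align*}
\int_0^c t^{-1}\bigl(\ln(1/t)\bigr)^{-(p_0-1)\gamma\frD-\beta}\,dt\quad\text{and}\quad \int_0^c t^{-1}\bigl(\ln(1/t)\bigr)^{(\varkappa-\alpha)/(p_0-1)+\gamma\frD}\,dt,
\end{align*}
which together force $(1-\beta)/(p_0-1)<\gamma\frD<(\alpha-\varkappa-p_0+1)/(p_0-1)$; this interval is non-empty precisely under the global assumption~\eqref{ab_cond}. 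The subcritical setup~\ref{set3} is dual, and the critical setup~\ref{set1} is immediate with $\mu$ the Dirac mass at the origin, giving back the conditions $\beta>1$ and $\alpha>p_0+\varkappa-1$.

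The two limiting setups~\ref{set4},~\ref{set5} are the main novelty and also where the main obstacle lies: here $\frD=0$, so the polynomial factor in $|\frC_t|$ is simply $t^k$ or $t^{N-k}$, while the logarithmic factor from Lemma~\ref{L:me} carries exponent $\gamma k$ or $\gamma(N-k)$, not the now-vanishing $\gamma\frD$. Rerunning the same reduction, $\mu$ becomes an affine function of $\gamma k$ (resp.\ $\gamma(N-k)$), and the condition $\mu<-1$ produces the intervals for $\gamma$ stated in the lemma; the extra hypotheses $\alpha>p_0-1+\varkappa$ in~\ref{set4} and $\beta>1$ in~\ref{set5} are exactly what is required for those intervals to be non-empty. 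The only non-routine step throughout is the careful bookkeeping of the four sources of logarithmic factors --- from $\varphi$, from $\psi^{\ast}$, from the weight $a_0$, and from the Cantor scaling --- and verifying that $\ln(e+\tau)$ is indeed comparable to $\ln(1/t)$ after each substitution, so that the algebraically derived exponent $\mu$ is the genuine one controlling convergence.
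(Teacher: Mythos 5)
Your proposal is correct and follows essentially the same route as the paper: both reduce the claim to the two integrability conditions of Lemma~\ref{lemma:integr} with $F_1\sim\varphi$ and $F_2=a_0\psi$, insert the Cantor estimates of Lemma~\ref{L:me}, use the defining identities $p_0(k-\frD)=N-\frD$ and $(N-k)p_0'=N-\frD$ (and their subcritical duals) to cancel the polynomial powers of $t$, and check that the surviving logarithmic exponent is below $-1$; your representative supercritical computation matches the paper's case (b) exactly, and the remaining cases are the routine variants you describe. The only slight imprecision is that in the limiting setups the extra hypotheses are needed not merely for the $\gamma$-intervals to be non-empty (that already follows from \eqref{ab_cond}) but for them to contain a \emph{positive} $\gamma$, as required by the meager Cantor construction.
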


\begin{proof}

To shorten notation we write here $p$ instead of $p_0$. We use Lemma~\ref{lemma:integr} with 
$$
F_1(s,\tau) = \tau^p \ln^{-\beta}(e+\tau), \quad F_2(s,\tau) = a_0(s) \psi(\tau)
$$
and the estimates provided by Lemma~\ref{L:me}. We have 
$$
F_2^* (s,\tau) = a_0(s) \psi^* \left(\frac{\tau}{a_0(s)} \right).
$$

 We treat the five cases according to Definition~\ref{def:construct}.
\smallskip

\noindent\textbf{(a) Case $p=N/k$.} We estimate 
\begin{align*}
\int\limits_\Omega \Phi(x,|du|)\, dV =\int\limits_\Omega \varphi(|du|)\, dV  \lesssim \int\limits_0^{\sqrt{N}} t^{-pk + N-1} \ln^{-\beta} (e+t^{-1})\, dt = \int\limits_0^{\sqrt{N}} \ln^{-\beta} (e+t^{-1})\, \frac{dt}{t} <\infty,\\
\int\limits_\Omega \Phi^*(x,|dA|)\, dV \leq \int\limits_\Omega a_0(|\hat x|)\psi^*(|dA|/a_0(t))\, dV \lesssim \int\limits_0^{\sqrt{N}} \ln^{(\kappa-\alpha)/(p-1)}(e+t^{-1})  \frac{dt}{t} <\infty.
\end{align*}
Therefore the pair of forms $(u,A)$ is $(\Phi,k)$-separating.


\noindent\textbf{(b) Case $p>N/k$.} We use case (\textbf{ii}) of Lemma~\ref{lemma:integr}. For the first integral in \eqref{c:super}, we get
\begin{align*}
\int\limits_0^{\sqrt{N}} \varphi \left(\frac{\sup_{\bar x}\mu (B^{\bar x}_t )}{t^k} \right) |\frC_t|_k\,  t^{N-k-1}  \, dt<\infty &\lesssim \int\limits_0^{\sqrt{N}} t^{p (\frD-k)} (\ln t^{-1})^{-p \gamma \frD -\beta} t^{k-\frD} (\ln t^{-1})^{\gamma\frD} t^{N-k-1}\, dt\\
&=c \int\limits_0^{\sqrt{N}} t^{-1}  (\ln t^{-1})^{\gamma \frD(1-p)-\beta} \, dt<\infty.
\end{align*}

For the second integral in \eqref{c:super}, using $p'=(N-\frD)/(N-k)$ we have 
\begin{align*}
&\int\limits_0^{\sqrt{N}} a_0(t) \psi^* \left(\frac{t^{k-N}}{a_0(t)} \right)  |\frC_t|_{k}t^{N-k-1}\, dt \\
&\lesssim
\int\limits_0^{\sqrt{N}} t^{p'(k-N)} (\ln t^{-1})^{ \varkappa p'-\alpha/(p-1)}   t^{k-\frD} (\ln t^{-1})^{\gamma\frD - \varkappa} t^{N-k-1}\, dt\\
&= c \int\limits_0^{\sqrt{N}} t^{-1} (\ln t^{-1})^{\gamma\frD-(\alpha-\varkappa)/(p-1)} \, dt<\infty.
\end{align*}
 Then by Lemma~\ref{lemma:integr} (\textbf{ii}) the pair of forms $(u,A)$  is $(\Phi,k)$-separating.




\noindent\textbf{(c) Case $p<N/k$.} We use case (\textbf{i}) of Lemma~\ref{lemma:integr} for the first integral in \eqref{c:sub}, we have
\begin{align*}
\int\limits_0^{\sqrt{N}} \varphi(t^{-k})  |\frC_t|_{N-k} t^{k-1} \, dt &\lesssim \int\limits_0^{\sqrt{N}} t^{-pk} (\ln t^{-1})^{-\beta}   t^{N-k-\frD} (\ln t^{-1})^{\gamma\frD} t^{k-1}\, dt\\
&= c \int\limits_0^{\sqrt{N}} t^{-1} (\ln t^{-1})^{\gamma\frD-\beta} \, dt<\infty.
\end{align*}

For the second integral in \eqref{c:sub}, using $p' =(N-\frD)/(N-k-\frD)$ we get
\begin{align*}
&\int\limits_0^{\sqrt{N}} a_0(t) \psi^* \left(\frac{\sup_{\bar x} \mu (B^{\bar x}_t)}{t^{N-k} a_0(t)} \right)  |\frC_t|_{N-k}t^{k-1}\, dt \\&\lesssim \int\limits_0^{\sqrt{N}} t^{p' (\frD+k-N)} (\ln t^{-1})^{p'\varkappa  -p' \gamma \frD +\alpha / (1-p)} t^{N-k-\frD} (\ln t^{-1})^{\gamma\frD-\varkappa} t^{k-1} \, dt\\
&=c \int\limits_0^{\sqrt{N}} t^{-1}  (\ln t^{-1})^{ (\gamma \frD+\alpha-\varkappa) / (1-p) } \, dt<\infty.
\end{align*}
By Lemma~\ref{lemma:integr} (\textbf{i}), the pair $(u,A)$ is $(\Phi,k)$-separating. 




\medskip


\noindent\textbf{(d) Case $p=N/k$, $\alpha>p-1$.} We use case (\textbf{ii}) of Lemma~\ref{lemma:integr}. For the first integral in \eqref{c:super}, we get 
\begin{align*}
\int\limits_0^{\sqrt{N}} \varphi \left(\frac{\sup_{\bar x}\mu (B^{\bar x}_t )}{t^k} \right) |\frC_t|_k\,  t^{N-k-1}  \, dt&\lesssim \int\limits_0^{\sqrt{N}} t^{-pk} (\ln t^{-1})^{-p \gamma  k -\beta}  t^{k} (\ln t^{-1})^{\gamma  k} t^{N-k-1} \, dt\\
&=c \int\limits_0^{\sqrt{N}} t^{-1}  (\ln t^{-1})^{\gamma  k (1-p)-\beta} \, dt<\infty.
\end{align*}

For the second integral in \eqref{c:super}, using $p'= N/(N-k)$ we have
\begin{align*}
\int\limits_0^{\sqrt{N}} a_0(t) \psi^* \left(\frac{t^{k-N}}{a_0(t)} \right)  |\frC_t|_{k}t^{N-k-1}\, dt&\lesssim \int\limits_0^{\sqrt{N}} t^{p'(k-N)} (\ln t^{-1})^{p'\varkappa-\alpha/(p-1)}   t^{k} (\ln t^{-1})^{\gamma  k-\varkappa}t^{N-k-1}\, dt\\
&= c \int\limits_0^{\sqrt{N}} t^{-1} (\ln t^{-1})^{\gamma  k-(\alpha-\varkappa)/(p-1)} \, dt<\infty.
\end{align*}
By Lemma~\ref{lemma:integr} (\textbf{ii}), the pair $(u,A)$  is $(\Phi,k)$-separating. 



\noindent\textbf{(e) Case $p=N/k$, $\beta>1$.} We use case (\textbf{i}) of Lemma~\ref{lemma:integr} For the first integral in \eqref{c:sub}, we have
\begin{align*}
\int\limits_0^{\sqrt{N}} \varphi(t^{-k})  |\frC_t|_{N-k} t^{k-1} \, dt&\lesssim \int\limits_0^{\sqrt{N}} t^{-pk} (\ln t^{-1})^{-\beta}   t^{N-k} (\ln t^{-1})^{\gamma  (N-k)} t^{k-1}\, dt\\
&= c \int\limits_0^{\sqrt{N}} t^{-1} (\ln t^{-1})^{\gamma  (N-k)-\beta} \, dt<\infty.
\end{align*}

For the second integral in \eqref{c:super}, we get
\begin{align*}
&\int\limits_0^{\sqrt{N}} a_0(t) \psi^* \left(\frac{\sup_{\bar x} \mu (B^{\bar x}_t)}{t^{N-k} a_0(t)} \right)  |\frC_t|_{N-k}t^{k-1}\, dt\\&\lesssim \int\limits_0^{\sqrt{N}} t^{-(N-k)p' } (\ln t^{-1})^{p'\varkappa-p' \gamma  (N-k)  +\alpha / (1-p)}    t^{N-k} (\ln t^{-1})^{\gamma  (N-k)-\varkappa}t^{k-1}\, dt\\
&=c \int\limits_0^{\sqrt{N}} t^{-1}  (\ln t^{-1})^{ (\gamma (N-k)+\alpha-\varkappa) / (1-p) } \, dt<\infty.
\end{align*}
By Lemma~\ref{lemma:integr} (\textbf{i}), the pair of forms $(u,A)$ is $(\Phi,k)$-separating.
\end{proof}

\begin{theorem}\label{theorem:bor}

Under condition \eqref{ab_cond}, for any $k=1,\ldots,N-1$ and any $p>1$ there exists $\tilde \rho$ and a $(\Phi,k)$-separating pair of forms $(u,A)$ for $\Phi$ defined by \eqref{OrliczLog} and \eqref{adef1}. Therefore in these cases
$$
H^{d,\Phi(\cdot)}(\Omega,\Lambda^{k-1}) \neq W^{d,\Phi(\cdot)}(\Omega,\Lambda^{k-1}).
$$
Let $\eta \in C_0^\infty(\Omega)$ be such that $\eta=1$ in a neighbourhood of $\frS = \frS (u,A)$, $A^\circ = \eta A$, and $b=dA^\circ$. For the functional $\mathcal{F}_{\Phi,b}$ there holds
$$
 \inf  \mathcal{F}_{\Phi,b}(W^{d,\Phi(\cdot)}_T(\Omega,\Lambda^{k-1})) < \inf  \mathcal{F}_{\Phi,b}(H^{d,\Phi(\cdot)}_T (\Omega,\Lambda^{k-1})).
$$
\end{theorem}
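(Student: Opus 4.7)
The plan is to produce a $(\Phi,k)$-separating pair $(u,A)$ via Lemma~\ref{def:construct}, deduce non-density from Corollary~\ref{corr:nondense1}, and conclude the Lavrentiev gap by invoking Theorem~\ref{main} with $b=dA^\circ$.

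First I would split the argument according to the position of $p_0$ relative to the critical value $N/k$. When $p_0>N/k$, case~(b) of Lemma~\ref{def:construct} applies with the supercritical Setup~\ref{set2}: the prescribed interval $(1-\beta)/(p_0-1)<\gamma\frD<(\alpha-\varkappa-p_0+1)/(p_0-1)$ is non-empty precisely when $\alpha+\beta>p_0+\varkappa$. When $1<p_0<N/k$, case~(c) with the subcritical Setup~\ref{set3} works by the same algebraic reduction. For the borderline value $p_0=N/k$ I would observe that $\alpha+\beta>p_0+\varkappa$ forces at least one of the alternatives $\beta>1$ or $\alpha>p_0-1+\varkappa$, since the failure of both would give $\alpha+\beta\le p_0+\varkappa$. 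Accordingly I would invoke either case~(e) with the left-limiting Setup~\ref{set5} or case~(d) with the right-limiting Setup~\ref{set4}, in each situation checking that the required $\gamma$-interval has non-empty intersection with the positivity constraint $\gamma>0$ demanded by the meager-Cantor construction.

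Having secured the pair $(u,A)$, the non-density assertion $H^{d,\Phi(\cdot)}(\Omega,\Lambda^{k-1})\neq W^{d,\Phi(\cdot)}(\Omega,\Lambda^{k-1})$ is immediate from Corollary~\ref{corr:nondense1}. To finish, I would pick a cutoff $\eta\in C_0^\infty(\Omega)$ equal to $1$ in a neighbourhood of the singular set $\frS(u,A)$, set $A^\circ=\eta A$, $b=dA^\circ$, and apply Theorem~\ref{main}. Its hypotheses include the $\Delta_2$ and $\nabla_2$ conditions, which hold for $\varphi(t)\sim t^{p_0}\ln^{-\beta}(e+t)$ and $\psi(t)\sim t^{p_0}\ln^{\alpha}(e+t)$ because the logarithmic factors are slowly varying perturbations of a power with exponent $p_0>1$.

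The main obstacle I anticipate is the bookkeeping at $p_0=N/k$: one must dispatch correctly between the meager setups (d) and (e), and verify that the positivity of $\gamma$ is compatible with the $\gamma$-interval in each branch under the single hypothesis \eqref{ab_cond}. A secondary subtlety is that the $\gamma\frD$-intervals in cases (b) and (c) collapse precisely on the threshold surface $\alpha+\beta=p_0+\varkappa$, so the strict inequality in \eqref{ab_cond} is essential and cannot be relaxed within the present method.
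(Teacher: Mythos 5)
Your proposal is correct and follows essentially the same route as the paper: the paper proves Theorem~\ref{theorem:bor} implicitly by combining Lemma~\ref{def:construct} (which supplies the $(\Phi,k)$-separating pair in each of the regimes $p_0>N/k$, $p_0<N/k$, $p_0=N/k$) with Corollary~\ref{corr:nondense1} for non-density and Theorem~\ref{main} for the gap. Your explicit verification that condition \eqref{ab_cond} makes the $\gamma$-intervals non-empty, and the dichotomy $\beta>1$ or $\alpha>p_0-1+\varkappa$ at the critical value $p_0=N/k$ for dispatching between Setups~\ref{set4} and~\ref{set5}, correctly fills in the bookkeeping the paper leaves to the reader.
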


Note that here we have $\Phi$ given by $\Phi(x,t) = \varphi(t) + a(x)\psi(t)$, with $a \in C^{\omega(\cdot)}(\overline{\Omega})$, $\omega (t)\leq C \ln^{-\kappa}(1/t)$ for some $C>1$ (see Lemma~\ref{L:cont0}).  This proves Theorem \ref{theoremB}.

\subsection{Variable exponent model}
\label{subsec:var}

A classical example of an integrand from the class \eqref{eq:growth}  is the variable exponent model
\begin{equation}\label{PhiVar}
  \Phi(x,t) =  t^{p(x)},
\end{equation}
where $p\,:\, \Omega \to [p_{-},p_{+}]$ is a variable exponent. Let $p_0 \in (p_{-},p_{+})$, 
\begin{align}\label{eq:logcon}
\sigma(t) = \kappa \frac {\ln  \ln \frac 1 t }{\ln \frac 1 t},
\end{align}
 with~$\kappa>0$ and $\tilde \rho$ be a function to be defined later. Let $\xi \in C^\infty(\mathbb{R})$ be a positive nondecreasing function such that $\xi(t)=t$ if $t\in [(p_{-}+p_0)/2,(p_{+}+p_0)/2]$, $\xi(t)=\xi(p_{-})= (3p_{-}+p_0)/4$ if $t\leq p_{-}$, $\xi(t)=\xi(p_{+}) = (3p_{+} + p_0)/4$ if $t \geq p_+$. Set  
\begin{equation}\label{Var_p_def}
p(x) = \xi\bigl(p_0 + \sigma(|\hat x|) (2\tilde \rho-1)\bigr),  
\end{equation}
and let $\Phi$ be defined by \eqref{PhiVar}.

Recall that due to the well-know result from \cite{Zhi04} if the exponent $p$ has the modulus of continuity \eqref{eq:logcon} with sufficiently small $\kappa$ then smooth functions are dense in corresponding Sobolev-Orlicz space and the Lavrentiev phenomenon is absent. On the other hand, the example with one saddle point provided in \cite{Zhi04} ($k=0$, $N=2$, $p_{-}<2<p_{+}$) shows that for sufficiently large $\kappa$ the Lavrentiev gap occurs. We construct examples of the Lavrentiev phenomenon for $p(x)$-integrand in arbitrary dimension and for any $1<p_{-}<p_{+}<\infty$.

\begin{lemma}\label{def:construct3}

\begin{enumerate}

\item Let $p_0=N/k$ and $\kappa >k^{-2}\max(k,N-k)$. Use one saddle point~\ref{set1}.

\item Let $p_0>N/k$,
\begin{equation}\label{var_b}
\kappa > \frac{p_0(p_0-1)}{2(N-k)}, \quad\text{and}\quad 1- \kappa \frac{N-k}{p_0-1}< \gamma (kp_0-N) <\kappa \frac{N-k}{p_0-1}-(p_0-1). 
\end{equation}
Use supercritical~\ref{set2}.

\item Let $1<p_0<N/k$,
\begin{equation}\label{var_c}
\kappa > \frac{p_0}{2k} \quad\text{and}\quad p_0-1-\kappa k<\gamma (N-p_0k) < \kappa k -1.
\end{equation}
Use subcritical~\ref{set3}.

\item Let $p_0=N/k$,
\begin{equation}\label{var_d}
\kappa > \frac{N}{2k^2}, \quad\text{and}\quad \frac{k-\kappa k^2}{N-k} < \gamma k < \frac{k-N+\kappa k^2}{N-k}.
\end{equation}
Use right limiting critical~\ref{set4}.


\item Let $p_0=N/k$, 
\begin{equation}\label{var_e}
\kappa > \frac{N}{2k^2}, \quad\text{and}\quad -\kappa k + \frac{N-k}{k}<\gamma k < \kappa k -1.
\end{equation}
Use left limiting critical~\ref{set5}.


\end{enumerate}
Then for $\Phi$ given by \eqref{PhiVar} and \eqref{Var_p_def}, the pair of forms $u$ and $A$ is a $(\Phi,k)$-separating pair.
\end{lemma}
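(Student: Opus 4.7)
The strategy is to reduce everything to Lemma~\ref{lemma:integr}, following the same pattern as Lemma~\ref{def:construct1} in the double-phase case. The key new ingredient compared to the power-weight case is to understand how the variable-exponent perturbation $\sigma(|\hat x|)$ translates into logarithmic corrections at the level of the relevant Young functions.

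First, I would observe that by Proposition~\ref{prop:rho} (applied to whichever of $\rho_\frS$ or $1-\rho_\frS$ is chosen as $\tilde\rho$) the cutoff $\tilde\rho$ equals $0$ on the support of $du$ and $1$ on the support of $dA$. Combined with the definition \eqref{Var_p_def} of $p(x)$, this gives
$$p(x)=\xi\bigl(p_0-\sigma(|\hat x|)\bigr)\quad\text{on } \operatorname{supp} du,\qquad p(x)=\xi\bigl(p_0+\sigma(|\hat x|)\bigr)\quad\text{on } \operatorname{supp} dA,$$
so on each of these supports (for $|\hat x|$ small enough that $\xi=\mathrm{id}$) we have the clean expressions $\Phi(x,t)=t^{p_0\mp\sigma(|\hat x|)}$. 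Taking $F_1(s,\tau)=\tau^{p_0-\sigma(s)}$ and $F_2(s,\tau)=\tau^{p_0+\sigma(s)}$, we have $\Phi(x,t)\le F_1(|\hat x|,t)$ on $\operatorname{supp} du$ and $\Phi(x,t)\ge F_2(|\hat x|,t)$ on $\operatorname{supp} dA$. The conjugate function satisfies $F_2^*(s,\tau)\sim\tau^{(p_0+\sigma(s))'}$, and an elementary Taylor expansion yields $(p_0+\sigma)'=p_0'-\sigma/(p_0-1)^2+O(\sigma^2)$.

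The second ingredient is the elementary identity
$$t^{\pm c\sigma(t)}=\exp\!\bigl(\pm c\kappa\,\tfrac{\ln\ln(1/t)}{\ln(1/t)}\cdot \ln t\bigr)=\bigl(\ln(1/t)\bigr)^{\mp c\kappa},$$
valid up to lower order. This converts every $\sigma$-perturbation in the exponent into a polylogarithmic factor. In each of the five setups I would now plug the bounds from Lemma~\ref{L:me} on $|\frC_t|$ and $\sup_{\bar x}\mu(B^{\bar x}_t)$ into the integrals $\mathcal I_1,\mathcal I_2$ of Lemma~\ref{lemma:integr}, using case (\textbf{i}) for setups \ref{set1}, \ref{set3}, \ref{set5} and case (\textbf{ii}) for \ref{set2}, \ref{set4}.

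After cancellation of the algebraic powers of $t$ (which works out precisely because $p_0=N/k$ in the critical cases and $p_0=(N-\frD)/(k-\frD)$ or $p_0=(N-\frD)/k$ in the super/subcritical ones), both integrals reduce to $\int_0^{\sqrt N}t^{-1}(\ln(1/t))^{-A_j}\,dt$ with $A_1$ coming from $\mathcal I_1$ and $A_2$ from $\mathcal I_2$. For Setup \ref{set2}, for instance, $A_1=\kappa(k-\frD)+\gamma\frD(p_0-1)$ and $A_2=\kappa(N-k)/(p_0-1)^2-\gamma\frD$; using $(k-\frD)(p_0-1)=N-k$, the two convergence conditions $A_1,A_2>1$ rearrange exactly to the window \eqref{var_b} for $\gamma(kp_0-N)$, and the window is nonempty iff $\kappa>p_0(p_0-1)/(2(N-k))$. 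Setups \ref{set3}, \ref{set4}, \ref{set5} run in the same way and produce the conditions \eqref{var_c}, \eqref{var_d}, \eqref{var_e}; Setup \ref{set1} uses $\frC=\{\mathrm{pt}\}$ (so $|\frC_t|\sim t^{N-k}$, $\mu=\delta_0$) and yields the single condition $\kappa>k^{-2}\max(k,N-k)$.

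The main obstacle is purely bookkeeping: tracking the $\sigma$-corrections through both $\Phi$ and $\Phi^*$, making sure the leading algebraic power $t^{-N}$ cancels correctly against the measure/volume powers $t^{k-\frD}$, $t^{N-k-\frD}$ supplied by Lemma~\ref{L:me}, and verifying that the logarithmic exponent from $t^{\pm\sigma(t)\ln\tau}$ combines additively with the $(\ln 1/t)^{\gamma\frD}$ factor from Lemma~\ref{L:me} to yield exactly the inequalities \eqref{var_b}--\eqref{var_e}. The sharp lower bounds on $\kappa$ are then simply the nonemptiness conditions for the resulting intervals for $\gamma\frD$ (or $\gamma k$, $\gamma(N-k)$ in the limiting setups).
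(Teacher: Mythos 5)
Your proposal follows essentially the same route as the paper: it invokes Lemma~\ref{lemma:integr} with $F_1(s,\tau)=\tau^{p_0-\sigma(s)}$, $F_2(s,\tau)=\tau^{p_0+\sigma(s)}$, uses the identity $t^{\sigma(t)}=(\ln t^{-1})^{-\kappa}$ together with the estimates of Lemma~\ref{L:me} to reduce both integrals to $\int_0 t^{-1}(\ln t^{-1})^{-A_j}\,dt$, and reads off \eqref{var_b}--\eqref{var_e} as the conditions $A_j>1$ plus nonemptiness of the resulting window for $\gamma$ (your worked check of Setup~\ref{set2}, using $k-\frD=(N-k)/(p_0-1)$, agrees with the paper's computation). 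The only cosmetic difference is that you Taylor-expand $(p_0+\sigma)'$ while the paper keeps the exact $t$-dependent exponent $r(t)$ and passes to the limit $t\to+0$; these are equivalent.
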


\begin{proof}

We use Lemma~\ref{lemma:integr} with $F_1(s,\tau) = \tau^{p_0-\sigma(s)}$ and $F_2(s,\tau) =\tau^{p_0+\sigma(s)}$. Clearly, $F_2^* (s,\tau) \le c(p_{-},p_{+}) \tau^{(p_0+\sigma(s))'}$. Note that $t^{\sigma(t)} = (\ln t^{-1})^{-\kappa} $.

\smallskip

\noindent\textbf{(a) Case $p_0=N/k$.} We evaluate
$$
\int\limits_\Omega \Phi(x,|du|)\, dV \lesssim \int\limits_\Omega |du|^{p_0-\sigma(|\hat x|)}\, dV \lesssim \int\limits_0^{\sqrt{N}} t^{N-1-k (p_0-\sigma(t))}\, dt 
\lesssim  \int\limits_0^{\sqrt{N}} (\ln t^{-1})^{-k\kappa} t^{-1}\, dt<\infty
$$
provided that $k\kappa >1$. Also
\begin{gather*}
\int \limits_\Omega \Phi^*(x,|dA|)\,dV \lesssim \int\limits_\Omega |dA|^{(p_0+\sigma(\hat x))'}\,dV\\
\lesssim \int\limits_0^{\sqrt{N}} t^{(k-N)(p_0+\sigma(t))/(p_0+\sigma(t)-1)} t^{N-1}\, dt 
= \int\limits_0^{\sqrt{N}} (\ln t^{-1})^{r(t)} t^{-1}\, dt,
\end{gather*}
where $r(t) = -\kappa k^2 /(N-k+k \sigma(t))$. Since $\lim\limits_{t\to +0} r(t)<-1$, the last integral converges.

\noindent\textbf{(b) Case $p_0>N/k$.} We have $\frD = \frac{p_0 k - N}{p_0-1}$, $p_0=\frac{N-\frD}{k-\frD}$, and the conditions \eqref{var_b} on $\kappa$ and $\gamma$ can be rewritten as 
\begin{equation}\label{var_b1}
\kappa > \frac{N-\frD}{2(k-\frD)^2} \quad \text{and} \quad \frac{k-\frD}{N-k} - \kappa \frac{(k-\frD)^2}{N-k} < \gamma \frD < \kappa \frac{(k-\frD)^2}{N-k}-1.
\end{equation}

We use case (\textbf{ii}) of Lemma~\ref{lemma:integr}.  For the first integral in \eqref{c:super}, we get
\begin{align*}
&\int\limits_0^{\sqrt{N}} F_1 \bigl(t, t^{-k}\sup_{\bar x}\mu (B^{\bar x}_t ) \bigr) |\frC_t|_k\,  t^{N-k-1}dt =\int\limits_0^{\sqrt{N}} (t^{-k} \sup_{\bar x}\mu (B^{\bar x}_t))^{p_0-\sigma(t)} |\frC_t|_k\,  t^{N-k-1}  \, dt\\ 
& \lesssim \int\limits_0^{\sqrt{N}} (t^{-k} t^{\frD} (\ln (t^{-1})^{-\gamma \frD}) )^{\frac{N-\frD}{k-\frD}-\sigma(t)} t^{k-\frD}(\ln(t^{-1}))^{\gamma\frD} t^{N-k}  \frac {dt}{t} =\int\limits_0^{\sqrt{N}} t^{(k-\frD)\sigma(t)}(\ln(t^{-1}))^{(\frac{k-N}{k-\frD}+\sigma(t))\gamma\frD}  \frac {dt}{t}\\
 &  = \int\limits_0^{\sqrt{N}}(\ln(t^{-1}))^{r(t)}  \frac {dt}{t}, \quad \quad r(t) = \left(\frac{k-N}{k-\frD}+\sigma(t)\right)\gamma\frD - \kappa(k-\frD).
\end{align*}
Since \eqref{var_b1} implies that $\lim\limits_{t\to +0} r(t)<-1$, the last integral converges.



 For the second integral in \eqref{c:super}, we get
\begin{align*}
\int\limits_0^{\sqrt{N}} F_2^*  \bigl( t,t^{k-N} \bigr)  |\frC_t|_{k}t^{N-k-1}\, dt&=\int\limits_0^{\sqrt{N}}(t^{k-N})^{(\frac{N-\frD}{k-\frD}+\sigma(t))'}   t^{k-\frD}(\ln(t^{-1}))^{\gamma\frD} t^{N-k-1}\,  dt \\
 & =\int\limits_0^{\sqrt{N}} (\ln t^{-1})^{r(t)} \frac{dt}{t},\quad r(t)=\gamma \frD - \kappa \frac{(k-\frD)^2}{N-k+(k-\frD)\sigma(t)}.
\end{align*}
Since \eqref{var_b1} implies that $\lim\limits_{t\to +0} r(t)<-1$, the last integral converges. 

By Lemma~\ref{lemma:integr} (\textbf{ii}) the pair $(u,A)$ is $(\Phi,k)$-separating. 

\noindent\textbf{(c) Case $p_0<N/k$.} We have~$\frD = N-p_0k$ and the conditions \eqref{var_c} on $\kappa$ and $\gamma$ can be rewritten as
\begin{equation}\label{var_c1}
\kappa > \frac{N-\frD}{2k^2}\quad \text{and} \quad \frac{N-\frD-k}{k}-\kappa k<\gamma \frD < \kappa k -1.
\end{equation} 

 We use case (\textbf{i}) of Lemma~\ref{lemma:integr} for the first integral in \eqref{c:sub}, we have

$$
\int\limits_0^{\sqrt{N}} F_1(t,t^{-k})  |\frC_t|_{N-k} t^{k-1} \, dt=\int\limits_0^{\sqrt{N}} (t^{-k})^{p_0-\sigma(t)}  t^{N-k-\frD} (\ln t^{-1})^{\gamma\frD} t^{k-1}\, dt
=\int\limits_0^{\sqrt{N}} (\ln t^{-1})^{\gamma \frD-\kappa k} \frac{dt}{t}<\infty 
$$
since \eqref{var_c1} implies $\gamma \frD - \kappa k < -1$.

For the second integral in \eqref{c:sub}, using $p' =(N-\frD)/(N-k-\frD)$ we get
\begin{align*}
&\int\limits_0^{\sqrt{N}} F_2^* \bigl(t, t^{k-N}\sup_{\bar x} \mu (B^{\bar x}_t) \bigr)  |\frC_t|_{N-k}t^{k-1}\, dt=\int\limits_0^{\sqrt{N}} (t^{k-N}\sup_{\bar x} \mu (B^{\bar x}_t))^{(p_0+\sigma(t))'}|\frC_t|_{N-k}t^{k-1}\, dt\\
&\le \int\limits_0^{\sqrt{N}} (t^{k-N} t^\frD (\ln t^{-1})^{-\gamma \frD}))^{(p_0+\sigma(t))'}t^{N-k-\frD} (\ln t^{-1})^{\gamma\frD} t^{k-1}\, dt = \int\limits_0^{\sqrt{N}} (\ln t^{-1})^{r(t)} \frac{dt}{t},\\
&\text{where}\quad r(t)=\frac{-\kappa k^2 -k \gamma \frD}{N-\frD-k+k\sigma}.
\end{align*}
Since \eqref{var_c1} implies that $\lim\limits_{t\to +0}r(t)<-1$, the last integral converges.

By Lemma~\ref{lemma:integr} (\textbf{i}), the pair $(u,A)$ is a $(\Phi,k)$-separating.

\medskip

\noindent\textbf{(d) Case $p_0=N/k+0$.} For the first integral in \eqref{c:super}, we get
\begin{align*}
&\int\limits_0^{\sqrt{N}} F_1 \bigl(t, t^{-k}\sup_{\bar x}\mu (B^{\bar x}_t ) \bigr) |\frC_t|_k\,  t^{N-k-1}dt =\int\limits_0^{\sqrt{N}} (t^{-k} \sup_{\bar x}\mu (B^{\bar x}_t))^{p_0-\sigma(t)} |\frC_t|_k\,  t^{N-k-1}  \, dt\\ 
 &\lesssim \int\limits_0^{\sqrt{N}} (t^{-k} (\ln (t^{-1})^{-\gamma k}) )^{\frac{N}{k}-\sigma(t)} t^{k}(\ln(t^{-1}))^{\gamma k} t^{N-k}  \frac {dt}{t} 
  =\int\limits_0^{\sqrt{N}}(\ln(t^{-1}))^{\gamma(k-N+k\sigma(t))-\kappa k}  \frac {dt}{t}<\infty 
\end{align*}
since \eqref{var_d} implies $\gamma (k-N)-\kappa k < -1$.

For the second integral in \eqref{c:super} we have
\begin{align*}
\int\limits_0^{\sqrt{N}} F_2^*  \bigl( t,t^{k-N} \bigr)  |\frC_t|_{k}t^{N-k-1}\, dt&=\int\limits_0^{\sqrt{N}}(t^{k-N})^{(\frac{N}{k}+\sigma(t))'}   t^{k}(\ln(t^{-1}))^{\gamma k} t^{N-k-1}\,  dt  =\int\limits_0^{\sqrt{N}} (\ln t^{-1})^{r(t)} \frac{dt}{t}, \\
r(t)=\gamma k - \frac{\kappa k^2}{N-k+k\sigma}.
\end{align*}
Since \eqref{var_d} implies that $\lim\limits_{t\to +0}r(t)<-1$, the last integral converges.  

\medskip

\noindent\textbf{(e) Case $p_0=N/k-0$.} We use case (\textbf{i}) of Lemma~\ref{lemma:integr} for the first integral in \eqref{c:sub}, we have

$$
\int\limits_0^{\sqrt{N}} F_1(t,t^{-k})  |\frC_t|_{N-k} t^{k-1} \, dt=\int\limits_0^{\sqrt{N}} (t^{-k})^{p_0-\sigma(t)}  t^{N-k-\frD} (\ln t^{-1})^{\gamma k} t^{k-1}\, dt
=\int\limits_0^{\sqrt{N}} (\ln t^{-1})^{\gamma k-\kappa k} \frac{dt}{t}<\infty 
$$
since \eqref{var_e} implies $\gamma k - \kappa k < -1 $.

For the second integral in \eqref{c:sub}, using $p_0' =N/(N-k)$ we get
\begin{align*}
\int\limits_0^{\sqrt{N}} F_2^* \bigl(t, t^{k-N}\sup_{\bar x} \mu (B^{\bar x}_t) \bigr)  |\frC_t|_{N-k}t^{k-1}\, dt=\int\limits_0^{\sqrt{N}} (t^{k-N}\sup_{\bar x} \mu (B^{\bar x}_t))^{(p_0+\sigma(t))'}|\frC_t|_{N-k}t^{k-1}\, dt\\
\le \int\limits_0^{\sqrt{N}} (t^{k-N} (\ln t^{-1})^{-\gamma k}))^{(p_0+\sigma(t))'}t^{N-k} (\ln t^{-1})^{\gamma k} t^{k-1}\, dt= \int\limits_0^{\sqrt{N}} (\ln t^{-1})^{r(t)} \frac{dt}{t},
\quad r(t)=\frac{-\kappa k^2 - \gamma k^2}{N-k+k\sigma}.
\end{align*}
Since \eqref{var_e} implies that $\lim\limits_{t\to +0}r(t)<-1$, the last integral converges.

\end{proof}

\begin{theorem}\label{theorem:var}

Let~$\Omega=B_1$, $k\in \{1,\ldots,N-1\}$. Let $1< p^- < p^+ < \infty$. Then there exists 
 a variable exponent~$p\,:\, \Omega \to [p^-,p^+]$ (defined by \eqref{Var_p_def} and \eqref{eq:logcon}) and $(\Phi,k)$-separating pair $(u,A)$ for $\Phi(x,t)=t^{p(x)}$ defined by \eqref{PhiVar} . Moreover, $p\in  C^\infty(\overline{\Omega}\setminus \frS) \cap C(\overline{\Omega})$, where $\frS = \frS (u,A)$ is a closed set of Lebesgue measure zero. In these cases
$$
H^{d,p(\cdot)}(\Omega,\Lambda^{k-1}) \neq W^{d,p(\cdot)}(\Omega,\Lambda^{k-1}).
$$
Let $\eta \in C_0^\infty(\Omega)$ be such that $\eta=1$ in a neighbourhood of $\frS = \frS (u,A)$, $A^\circ=\eta A$, $b=dA^\circ$. for the functional $\mathcal{F}_{\Phi,b}$ there holds
$$
 \inf  \mathcal{F}_{\Phi,b}(W^{d,p(\cdot)}_T(\Omega,\Lambda^{k-1})) < \inf  \mathcal{F}_{\Phi,b}(H^{d,p(\cdot)}_T (\Omega,\Lambda^{k-1})).
$$
For sufficiently large $t>0$ and
  \begin{align*}
    w_t &= \argmin
          \mathcal{F}_{\Phi,0}\big( tu^\partial+ W_0^{1,\Phi(\cdot)}(\Omega,\Lambda^{k-1})\big) \qquad
    \\
    h_t &= \argmin \mathcal{F}_{\Phi,0}\big( tu^\partial+ C_0^{\infty}(\Omega,\Lambda^{k-1})\big)
  \end{align*}
  we have $w_t \neq h_t$ and $\mathcal{F}_{\Phi,0}(w_t) < \mathcal{F}_{\Phi,0}(h_t)$.
\end{theorem}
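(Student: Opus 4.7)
The strategy is to instantiate the general framework of Section~\ref{sec:framework} with one of the setups of Section~\ref{sec:setup}. Pick $p_0\in (p^{-},p^{+})$: if $N/k\in (p^{-},p^{+})$, choose $p_0=N/k$ and use the critical setup~\ref{set1}; if $N/k\le p^{-}$, choose any $p_0\in (p^{-},p^{+})$ and use the supercritical setup~\ref{set2}; if $N/k\ge p^{+}$, use the subcritical setup~\ref{set3}. In each case, select $\gamma$ inside the interval prescribed by the corresponding item of Lemma~\ref{def:construct3} and $\kappa=\kappa_0(p^{-},p^{+},N,k)$ just above the threshold required there. The clipping function $\xi$ in \eqref{Var_p_def} guarantees $p(\cdot)\in [p^{-},p^{+}]$ regardless of the magnitude of $\kappa$; moreover, since $\sigma(|\hat x|)\to 0$ as $|\hat x|\to 0$, the clipping is not triggered on a neighbourhood of the singular set $\frS=\frS(u,A)$, so the estimates made in the proof of Lemma~\ref{def:construct3} (which are controlled by the behaviour near $\frS$) remain valid. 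The function $p$ is smooth on $\overline{\Omega}\setminus\frS$ because $\sigma$ and $\tilde\rho$ are smooth there, and extends continuously to $\frS$ with value $p_0$.

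Lemma~\ref{def:construct3} then furnishes a $(\Phi,k)$-separating pair $(u,A)$. Non-density follows immediately from Corollary~\ref{corr:nondense1}, yielding $H^{d,p(\cdot)}(\Omega,\Lambda^{k-1})\ne W^{d,p(\cdot)}(\Omega,\Lambda^{k-1})$. Choosing $\eta\in C^{\infty}_{0}(\Omega)$ equal to $1$ in a neighbourhood of $\frS$ and setting $b=dA^{\circ}=d(\eta A)$, Theorem~\ref{main} yields the Lavrentiev gap with nonzero linear term, and in particular the inequality between the $W$- and $H$-infima asserted in the theorem.

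For the final statement on Dirichlet minimizers, by Theorem~\ref{thm:harmonic} it suffices to verify Assumption~\ref{Ass:basic1}. Set $s=t^{p_0-1}$, so that $st=t^{p_0}=s^{p_0'}$. On $\operatorname{supp}(du)$ the exponent satisfies $p(x)=p_0-\sigma(|\hat x|)$ with $|\hat x|>0$, hence
\begin{equation*}
  \mathcal{F}(tu)=t^{p_0}\int_{\operatorname{supp}(du)} t^{-\sigma(|\hat x|)}|du|^{p(x)}\,dV.
\end{equation*}
The integrand is dominated pointwise by $|du|^{p(x)}\in L^{1}(\Omega)$ and tends to $0$ as $t\to\infty$; dominated convergence then gives $\mathcal{F}(tu)=o(t^{p_0})$. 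An analogous argument on $\operatorname{supp}(b)$, using $p'(x)<p_0'$ strictly there and the equivalence $\Phi^{*}(x,\tau)\sim\tau^{p'(x)}$ for bounded $p(x)$, yields $\mathcal{F}^{*}(sb)=o(s^{p_0'})=o(t^{p_0})$. Therefore $\mathcal{F}(tu)+\mathcal{F}^{*}(sb)<st$ for every sufficiently large $t$, Assumption~\ref{Ass:basic1} holds, and Theorem~\ref{thm:harmonic} produces $w_t\ne h_t$ with $\mathcal{F}(w_t)<\mathcal{F}(h_t)$. The main obstacle is exactly this step: unlike the polynomial double-phase and borderline integrands, the variable exponent functional lacks homogeneous scaling in $t$, so the scaling shortcut used in Theorems~\ref{theorem:double} and~\ref{theorem:bor} is unavailable, and we must exploit the strict inequalities $p(x)<p_0$ and $p'(x)<p_0'$ on the relevant supports to extract the $o(t^{p_0})$ improvement by uniform integrability.
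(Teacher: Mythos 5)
Your proposal is correct and follows essentially the same route as the paper: invoke Lemma~\ref{def:construct3} to produce the $(\Phi,k)$-separating pair, then apply Corollary~\ref{corr:nondense1} and Theorem~\ref{main} for non-density and the gap, and Theorem~\ref{thm:harmonic} via Assumption~\ref{Ass:basic1} for the Dirichlet statement. The only difference is that where the paper defers the verification of Assumption~\ref{Ass:basic1} to Theorem~32 of \cite{BalDieSur20}, you carry it out explicitly (taking $s=t^{p_0-1}$ and using $p(x)<p_0$ on $\operatorname{supp}(du)$, $p'(x)<p_0'$ on $\operatorname{supp}(dA)$ together with dominated convergence to get the $o(st)$ bound), which is exactly the intended argument.
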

\begin{proof}

We have to check only the last statement (different solutions of the Dirichlet problem). By Theorem~\ref{thm:harmonic}, it remains to show that for our $(\Phi,k)$-separating pair $(u,A)$  there holds
  \begin{align*}
    \mathcal{F}_{\Phi,0}(tu) + \mathcal{F}_{\Phi,0}^*(s\, dA) \leq \tfrac 12  st
  \end{align*}
  for suitable large~$s,t$. The argument repeats that given in the proof of Theorem 32 in \cite{BalDieSur20} and we omit it.\end{proof}


In this construction by Lemma~\ref{L:cont0} the variable exponent $p(\cdot)$ has the modulus of continuity $C \bigl(\ln t^{-1}\bigr)^{-1}\ln\ln t^{-1}$.  This proves Theorem \ref{theoremC}.

\bigskip
The authors express their deep gratitude to Lars Diening for  the fruitful  discussions.

\printbibliography

\end{document}